\renewcommand*\showkeyslabelformat[1]{%
\fbox{\parbox[t]{1.4 cm}{\raggedright\normalfont\small\url{#1}}}}
\definecolor{labelkey}{rgb}{.1,.1,.8}
\definecolor{refkey}{rgb}{0,0.6,0.0}
\setlist[enumerate]{itemsep=0.5pt,topsep=1pt}
\setlist[itemize]{itemsep=0.5pt,topsep=1pt}
\definecolor{dgreen}{rgb}{0.00,0.49,0.00}
\definecolor{dblue}{rgb}{0,0.08,0.75}
\colorlet{myblue}{dblue}
\colorlet{mygreen}{dgreen}
\definecolor{myfirstblue}{rgb}{.8, .8, 1}
\newcommand*\mybluebox[1]{%
\colorbox{RoyalBlue!20}{\hspace{1em}#1\hspace{1em}}}
\crefname{equation}{}{}
\crefname{chapter}{Appendix}{chapters}
\crefname{item}{}{items}
\crefname{figure}{Figure}{Figures}
\crefname{theorem}{\protect\theoremname}{Theorems}
\crefname{lemma}{\protect\lemmaname}{Lemmas}
\crefname{proposition}{\protect\propositionname}{Propositions}
\crefname{corollary}{\protect\corollaryname}{\protect\corollaryname}
\crefname{definition}{\protect\definitionname}{\protect\definitionname}
\crefname{fact}{\protect\factname}{\protect\factname}
\crefname{example}{\protect\examplename}{Examples}
\crefname{algorithm}{Algorithm}{Algorithms}
\crefname{remark}{\protect\remarkname}{\protect\remarkname}
\crefname{case}{\protect\casename}{\protect\casename}
\crefname{question}{\protect\questionname}{\protect\questionname}
\crefname{claim}{\protect\claimname}{\protect\claimname}
\crefname{enumi}{}{}
\crefname{appsec}{Appendix}{Appendices}
\g@addto@macro\normalsize{%
  \setlength\abovedisplayskip{6pt}
  \setlength\belowdisplayskip{6pt}
  \setlength\abovedisplayshortskip{6pt}
  \setlength\belowdisplayshortskip{6pt}
}
\let\orgdescriptionlabel\descriptionlabel
\renewcommand*{\descriptionlabel}[1]{%
	\let\orglabel\label
	\let\label\@gobble
	\phantomsection
	\edef\@currentlabel{#1}%
	\let\label\orglabel
	\orgdescriptionlabel{#1}%
}
\let\leq\leqslant
\let\geq\geqslant
\renewcommand{\implies}{\Rightarrow}
\renewcommand{\iff}{\Leftrightarrow}
\def\th@plain{%
	\thm@notefont{} 
	\itshape 
}
\def\th@definition{%
	\thm@notefont{}
	\normalfont 
}
\g@addto@macro\th@remark{\thm@headpunct{}}
\g@addto@macro\th@definition{\thm@headpunct{}}
\g@addto@macro\th@plain{\thm@headpunct{}}
\theoremstyle{plain}
\newtheorem{theorem}{\protect\theoremname}[section]
\newtheorem{corollary}[theorem]{\protect\corollaryname}
\newtheorem{lemma}[theorem]{\protect\lemmaname}
\newtheorem{proposition}[theorem]{\protect\propositionname}
\theoremstyle{definition}
\newtheorem{remark}[theorem]{\protect\remarkname}
\newtheorem{example}[theorem]{\protect\examplename}
\newtheorem{assumption}[theorem]{\protect\assumptionname}
\newtheorem{algorithm}[theorem]{\protect\algorithmname}
\providecommand{\theoremname}{Theorem}
\providecommand{\propositionname}{Proposition}
\providecommand{\corollaryname}{Corollary}
\providecommand{\factname}{Fact}
\providecommand{\lemmaname}{Lemma}
\providecommand{\assumptionname}{Assumption}
\providecommand{\algorithmname}{Algorithm}
\providecommand{\definitionname}{Definition}
\providecommand{\notationname}{Notation}
\providecommand{\remarkname}{Remark}
\providecommand{\examplename}{Example}
\providecommand{\claimname}{Claim}
\providecommand{\algorithmname}{Algorithm}
\providecommand{\openprobname}{Open Problem}
\let\originalleft\left
\let\originalright\right
\renewcommand{\left}{\mathopen{}\mathclose\bgroup\originalleft}
\renewcommand{\right}{\aftergroup\egroup\originalright}
\DeclarePairedDelimiter{\norm}{\lVert}{\rVert}
\DeclarePairedDelimiterX{\scal}[2]{\langle}{\rangle}{  #1 \, \delimsize \vert \, \mathopen{}  #2  } 
\DeclarePairedDelimiterX\menge[2]{ \{ }{ \} }{ {#1} ~ \delimsize \vert ~ \mathopen{}  {#2} }  
\DeclarePairedDelimiterX\fa[2]{ ( }{ )_{#2} }{#1}  
\DeclarePairedDelimiterX\set[2]{ \{ }{ \}_{#2} }{#1}  
\DeclarePairedDelimiter\rbr{ ( }{ ) }
\DeclarePairedDelimiter\rb{ ( }{ ) }
\DeclarePairedDelimiter\sbrc{ [ }{ ] }
\newcommand{\HH}{\ensuremath{\mathcal H}}
\newcommand{\RR}{\ensuremath{\mathbb R}}
\newcommand{\NN}{\ensuremath{\mathbb N}}
\newcommand{\NPP}{\ensuremath{\mathbb{N}^{\ast}}}
\newcommand{\minf}{\ensuremath{ {-}\infty}}
\newcommand{\pinf}{\ensuremath{ {+}\infty}}
\newcommand{\RX}{\ensuremath{ \left] \minf, \pinf \right] }}
\newcommand{\RP}{\ensuremath{\mathbb{R}_{+}}}
\newcommand{\RPP}{\ensuremath{\left]0,\pinf \right[}}
\DeclareMathOperator*{\Argmin}{Argmin}
\newcommand{\closu}[1]{\ensuremath{\overline{#1} }}
\newcommand{\inte}{\ensuremath{\operatorname{int}}}
\newcommand{\ran}{\ensuremath{\operatorname{ran}}}
\newcommand{\dom}{\ensuremath{\operatorname{dom}}}
\newcommand{\Id}{\ensuremath{\operatorname{Id}}}
\newcommand{\prox}[1]{\ensuremath{\mathop{\operatorname{Prox}_{#1}}}}
\newcommand{\pr}{\ensuremath{\operatorname{P}}} 
\newcommand{\grad}[1]{\ensuremath{\mathop{\nabla {#1} }   }}
\newcommand{\ball}[2]{\ensuremath{\operatorname{B}\left({#1};{#2}\right)}}
\let\subset\subseteq
\newcommand{\email}[1]{\href{mailto:#1}{\nolinkurl{#1}}} 
\newcommand{\tsp}{\hspace{-0.15em}}
\let\oldFootnote\footnote
\newcommand\nextToken\relax
\renewcommand\footnote[1]{%
    \oldFootnote{#1}\futurelet\nextToken\isFootnote}
\newcommand\isFootnote{%
    \ifx\footnote\nextToken\textsuperscript{,}\fi}
\newcommand{\bigO}[1]{\ensuremath{ {\mathrm{O}}{#1 }}}
\newcommand\smallO[1]{
\mathchoice
{
  {\mathrm{o}}{#1}
}
{
  {\mathrm{o}}{#1}
}
{
  {\mathrm{o}}{#1}
}
{
  \scalebox{0.8}{$\mathrm{o}$}{#1}
}
}
\newcommand{\tauinf}{\ensuremath{\tau_{\infty}}}
\begin{document}

\title{\sffamily  
Applying FISTA to optimization problems (with or) without minimizers
}

\author{
Heinz H. Bauschke\thanks{
Mathematics, University of British Columbia, Kelowna, B.C.\ V1V~1V7, Canada. 
Email: \email{heinz.bauschke@ubc.ca}.},~
Minh N. Bui\thanks{
Department of Mathematics,
North Carolina State University,
Raleigh, NC 27695-8205, USA.
Email: \email{mnbui@ncsu.edu}.},~
and Xianfu Wang\thanks{
Mathematics, University of British Columbia, Kelowna, B.C.\ V1V~1V7, Canada. 
Email: \email{shawn.wang@ubc.ca}.}
}

\date{July 2, 2019}

\maketitle

\begin{abstract}
\noindent
Beck and Teboulle's FISTA method for finding a minimizer
of the sum of two convex functions, 
one of which has a Lipschitz continuous gradient
whereas 
the other may be nonsmooth, 
is arguably the most important
optimization algorithm of the past decade. While research activity
on FISTA has exploded ever since, the mathematically challenging case when the original optimization
problem has no minimizer has found only limited attention. 

In this work, we systematically study FISTA and its variants. 
We present general results that
are applicable, regardless of the existence of minimizers. 
\end{abstract}

{\small
\noindent
{\bfseries 2010 Mathematics Subject Classification:}
{Primary 
90C25,
65K05; 
Secondary 
49M27 
}

\noindent {\bfseries Keywords:}
convex function,
FISTA, 
forward-backward method, 
Nesterov acceleration, 
proximal gradient algorithm
}

\section{Introduction}
\label{sec:intro}
We assume that 
\begin{empheq}[box=\mybluebox]{equation}
\label{eq:Hilbert.space}
\text{$\HH$ is a real Hilbert space}
\end{empheq}
with inner product $\scal{\, \cdot}{\cdot\,} $
and associated norm $\norm{\, \cdot \, } $.
We also presuppose throughout the paper that 
\begin{empheq}[box=\mybluebox]{equation}
f \colon \HH \to \RR
\quad\text{and}\quad
g \colon \HH \to \RX
\end{empheq}
satisfy the following:

\begin{assumption}\label{assump:1} \ 
	\begin{enumerate}[label = \bfseries(A\arabic{enumi})]
		\item\label{assump:f} $f$
		is convex and Fr\'{e}chet
		differentiable on $\HH$,
        and $\grad{f}$ is 
		$\beta$-Lipschitz continuous
		with  $\beta \in \RPP$;
		\item\label{assump:g} $g$
		is convex,
		lower semicontinuous,
		and proper;
		\item\label{assump:gamma} $\gamma \in \left]0,1/\beta \right]$
		is a parameter.
\end{enumerate}
\end{assumption}
One fundamental problem in optimization is to 
\begin{equation}
  \label{e:theprob}
\text{minimize $f+g$ over $\HH$.}
\end{equation}
For convenience, we set 
\begin{empheq}[box = \mybluebox]{equation}
\label{eq:T.and.hINTRO}
h \coloneqq f + g \quad 
\text{and}
\quad 
T \coloneqq \prox{\gamma g}\circ\mathop{\rb[\big]{\Id - \mathop{\gamma {\grad{f}}}}},
\end{empheq}
and where we follow standard notation in convex analysis 
(as employed, e.g., in \cite{Bauschke-Combettes-2017}). 
Then many algorithms designed for solving \cref{e:theprob} employ
the forward-backward or proximal gradient operator $T$ in some fashion. 
Since the advent of Nesterov's acceleration \cite{Nesterov-1983} (when $g\equiv 0$)
and Beck and Teboulle's fast proximal gradient method FISTA
\cite{BeckTeboulle-FISTA}
(see also \cite[Chapter~10]{BeckSIAM}),
the literature on algorithms
for solving \cref{e:theprob} has literally exploded; see, e.g.,
\cite{Nesterov-1983, BeckTeboulle-FISTA, Aujol-Dossal-2015,
Attouch-Cabot-HAL2017, Attouch-fast-MPB-2018,
Attouch-rate-2016,Combettes-Glaudin,Attouch-JOTA-18} for
a selection of key contributions. 
Indeed, 
out of nearly one million mathematical publications that appeared
since 2009 and are indexed by \emph{Mathematical Reviews}, 
the 2009-FISTA paper \cite{BeckTeboulle-FISTA} by Beck and Teboulle
takes the \emph{number two spot!} (In passing, we note that it has been
cited more than 6,000 times on Google Scholar where it now receives about 
\emph{3 new citations every day!})
The overwhelming majority of these papers assume that the problem \cref{e:theprob}
has a solution to start with. 
Complementing and contributing to these analyses, we follow
a path less trodden: 

{\em 
The aim of this paper is to study the behaviour of the fast proximal gradient
methods (and monotone variants), in the case when
the original problem \cref{e:theprob} does not necessarily have a solution.
}

Before we turn to our main results, let us state the FISTA or fast proximal gradient method:
\begin{algorithm}[FISTA]
  Let $x_{0} \in \HH$, 
  set $y_{1}\coloneqq x_{0}$,
  and update 
	\begin{align}
	& \text{for~} n=1,2,\ldots \notag \\
	& \left\lfloor
	\begin{array}{ll}
	x_{n} & \coloneqq 
	Ty_{n}, \smallskip  \\ 
	y_{n+1} & \displaystyle  
	\coloneqq 
	x_{n} + \frac{\tau_{n}-1}{\tau_{n+1}}
    \rb{x_{n} - x_{n-1}},
	\end{array}
	\right.
	\end{align}
  where $T$ is
  defined in \cref{eq:T.and.hINTRO},
  $\NPP \coloneqq \{1,2,\ldots\}$, and $(\tau_n)_{n\in\NPP}$ is a sequence of
  real numbers in $\left[1,+\infty\right[$. 
\end{algorithm}

Note that when $\tau_n\equiv 1$, one obtains the classical (unaccelerated) 
proximal gradient method. 
There are two very popular choices for the sequence $(\tau_n)_{n\in\NPP}$ to
achieve acceleration. 
Firstly, given $\tau_1 \coloneqq 1$, 
the classical FISTA 
\cite{BeckTeboulle-FISTA,BeckTeboulle-MFISTA,Chambolle-Pock-2016,Nesterov-1983}
update is 
\begin{equation}
\rb{\forall n \in \NPP} \quad 
\tau_{n+1} \coloneqq \frac{1+\sqrt{1+4\tau_{n}^{2}} }{2}.
\end{equation}
The second update has the explicit formula
\begin{equation}
\rb{\forall n \in \NPP}
\quad 
\tau_{n} \coloneqq 
\frac{{n+\rho-1}}{\rho},
\end{equation}
where 
$\rho \in \left[2,\pinf\right[$; 
see, e.g.,  
\cite{Attouch-fast-MPB-2018,Attouch-rate-2016,Chambolle-Dossal-15,Su-Boyd-Candes-2016}. 

Convergence results of the sequence
generated by FISTA
under a suitable tuning
of $\fa{\tau_{n}}{n \in \NPP} $ can be found in 
\cite{Attouch-rate-2016,Attouch-Cabot-HAL2017,Chambolle-Dossal-15}.
The relaxed case was considered in \cite{Aujol-Dossal-2015}
and error-tolerant versions
were considered in \cite{Attouch-fast-MPB-2018,Attouch-JOTA-18}.
In addition, for results concerning the
rate of convergence of function values, 
see \cite{BeckTeboulle-FISTA,BeckTeboulle-MFISTA,Su-Boyd-Candes-2016,Schmidt-Nicolas-Francis-2011}.
The authors of \cite{Chambolle-Pock-2016} established 
a variant of FISTA that covers the strongly convex case.
An alternative of the classical proximal gradient algorithm
with relaxation and error is presented in 
\cite{Combettes-Salzo-Villa-2018}
(see also \cite{Bauschke-Combettes-2017,Bredies-2009,Schmidt-Nicolas-Francis-2011}).
Finally, a new forward-backward
splitting scheme (for finding
a zero of a sum of two maximally monotone operators) that
includes FISTA as a special case
was proposed in \cite{Combettes-Glaudin}.

{\em 
The main difference between our work and existing work is that
we focus on the minimizing property of the sequences generated by 
FISTA and MFISTA in the general framework, 
i.e., when the set $\Argmin\rbr{f+g} $ is possibly empty.
}
Let us now list our \textbf{main results}:
\begin{itemize}
\item \cref{t:general.case}
 establishes the behavior
 of FISTA in the possibly
 inconsistent case; 
 moreover, our assumption
 on $\fa{\tau_{n}}{n \in \NPP}$ (see \cref{e:cond-tau})
 is very mild. 
\item \cref{t:main-fista} concerns FISTA 
 when $\fa{\tau_{n}}{n \in \NPP} $ 
 behaves
 similarly to the Beck--Teboulle choice. 
\item \cref{t:main.ista} deals 
 with the case when $\fa{\tau_{n}}{n \in \NPP} $
 is bounded; see, in particular, 
 \cref{i:ista.2a} and \cref{i:ista.5ab}.
\item \cref{t:mfista}
 considers MFISTA \cite{BeckTeboulle-MFISTA}, 
 the monotone version of FISTA, when 
\cref{assump:2} is in force and 
$\fa{\tau_{n}}{n \in \NPP}$ is unbounded. 
\end{itemize}
To the best of our knowledge, \cref{t:general.case} is new.
The proof of \cref{t:main-fista}, which can be viewed as a ``discrete version''
of \cite[Theorem~2.3]{Attouch-fast-MPB-2018},
relies on techniques seen in 
\cite[Theorem~2.3]{Attouch-fast-MPB-2018}
and \cite[Proposition~3]{Attouch-Cabot-HAL2017};
items \cref{i:main1,i:main2,i:main3,i:main4,i:main5} are new. 
A result similar to \cref{t:main-fista}\cref{i:main1}
was mentioned in \cite[Theorem~4.1]{Attouch-fast-arxiv-2015}.
However, no proof was given, and 
the parameter sequence there    
is a special case of the one considered in
\cref{t:main-fista}.   
Items \cref{i:main.6a} and 
\cref{i:main.6b} is a slight modification of 
\cite[Proposition~4.3]{Attouch2017rate}.
Concerning \cref{t:main.ista}, items 
\cref{i:ista.1,i:ista.2,i:ista.3,i:ista.4,i:ista.5ab} are new
while \cref{i:ista.5a} was proven in 
\cite[Corollary~20(iii)]{Attouch-Cabot-HAL2017}.
Item~\cref{i:ista.1} in the classical case 
($\tau_{n} \equiv 1$) relates to
\cite[Theorem~4.2]{Cruz-Nghia-2016} where
linesearches were employed. 
In \cref{t:mfista}, items \cref{i:mfista.1,i:mfista.2,i:mfista.3,i:mfista.4,i:mfista.5} are new.
Compared to \cite[Theorem~5.1]{BeckTeboulle-MFISTA}, 
we allow many possible choices for 
the parameter sequence in \cref{t:mfista}\cref{i:mfista.6};
see, e.g., \cref{eg:seq,eg:Attouch-cond,eg:Aujol-seq}.
In addition, by adapting the technique
of \cite[Theorem~9]{Attouch-Cabot-HAL2017},
we improve the convergence rate of MFISTA under the condition
\cref{e:Attouch.cond2} in \cref{t:mfista}.

There are also several \textbf{minor results} worth emphasizing:
\cref{l:seq.1} is new. 
The notion of quasi-Fej\'{e}r monotonicity is revisited in \cref{l:fejer};
however, our error sequence need not be positive.
The assumptions in \cref{l:1-fista-step}
and \cref{l:MFISTA.1step} are somewhat minimal,
which allow us to establish
the minimizing property of FISTA and MFISTA
in the case where there are possibly
no minimizers in \cref{fista,mfista}. 
\cref{eg:Attouch-cond} is new. 
\cref{p:ISTA} describes the behaviour of $\fa{x_{n}-x_{n-1}}{n \in \NPP}$ in the classical 
proximal gradient (ISTA) case while 
\cref{c:ista.convergent} provides a sufficient condition for strong convergence of 
$(x_n)_{n\in\NPP}$ in this case. 
The new \cref{p:convergence} presents some progress towards the still open question regarding
the convergence of $\fa{x_{n}}{n \in \NPP} $ generated by classical FISTA.
The weak convergence part in \cref{c:ista.convergent}
was considered in \cite{Attouch2017rate};
however, our new Fej\'{e}rian approach allows us 
to obtain strong convergence when $\inte\rbr{\Argmin h} \neq \varnothing$.

Let us now turn to the organization of this paper. 
Classical results on real sequences and new results on the Fej\'{e}r
monotonicity are recorded in \cref{au}.
The ``one step'' behaviour  
of both FISTA and MFISTA is carefully examined in \cref{pre}.
In \cref{sec:parseq}, we investigate properties of
the parameter sequence $(\tau_n)_{n\in\NPP}$. 
Our main results on FISTA and MFISTA are presented in 
\cref{fista,mfista} respectively. 
The concluding \cref{openprobs} contains a discussion of open problems. 

A final note on notation is in order. 
For a sequence $\fa{\xi_{n}}{n \in \NPP} $ and 
an extended real number 
$\xi \in \left[\minf,\pinf \right]$,
the notation $\xi_{n}\uparrow \xi$ means that 
$\fa{\xi_{n}}{n\in \NPP} $ is increasing (i.e., $\xi_n\leq\xi_{n+1}$) and 
$\xi_{n} \to \xi$
as $n\to \pinf$. Likewise, $\xi_{n}\downarrow \xi$
means that $\fa{\xi_{n}}{n \in \NPP} $ is decreasing (i.e., $\xi_{n}\geq\xi_{n+1}$)
and $\xi_{n}\to \xi$ as $n\to \pinf$.
For any other notation not defined, we refer the reader to 
\cite{Bauschke-Combettes-2017}.

\section{Auxiliary results}
\label{au}

In this section, we collect results on sequences
which will make the proofs in later sections more structured.

\begin{lemma}\label{l:blowsup}
  Let $\fa{\tau_{n}}{n \in \NPP} $
  be an increasing sequence 
  in  $\left[1,\pinf \right[$
    such that $\lim  \tau_{n} = \pinf$.
    Then 
    \begin{equation}
      \label{eq:sum-blowsup}
    \sum_{n \in \NPP} \rbr*{1- \rbr[\bigg]{
    \frac{\tau_{n}-1}{\tau_{n+1}}}^{\tsp 2}  } 
    = \sum_{n \in \NPP} \rbr*{1-\frac{\tau_{n}^{2}}{\tau_{n+1}^{2}}} 
    = \pinf.
    \end{equation}
\end{lemma}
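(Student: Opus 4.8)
The plan is to establish the divergence of the middle series $\sum_{n\in\NPP}(1-\tau_n^2/\tau_{n+1}^2)$ and then transfer it to the left one by a termwise comparison. Since $\tau_n\ge 1$ and $(\tau_n)_{n\in\NPP}$ is increasing, one has $0\le\tau_n-1\le\tau_n\le\tau_{n+1}$, hence
$1-\left(\frac{\tau_n-1}{\tau_{n+1}}\right)^2\ge 1-\frac{\tau_n^2}{\tau_{n+1}^2}\ge 0$ for every $n\in\NPP$; so it suffices to prove that $\sum_{n\in\NPP}(1-\tau_n^2/\tau_{n+1}^2)=\pinf$, and \cref{eq:sum-blowsup} follows at once.

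For the series $\sum_{n\in\NPP}(1-\tau_n^2/\tau_{n+1}^2)$, the point is that its terms are nonnegative and its numerators telescope: $1-\tau_n^2/\tau_{n+1}^2=(\tau_{n+1}^2-\tau_n^2)/\tau_{n+1}^2$. I would split $\NPP$ into consecutive blocks on which $\tau$ at least doubles. Set $n_1\coloneqq 1$ and, recursively, $n_{k+1}\coloneqq\min\{m\in\NPP \mid m>n_k,\ \tau_m\ge 2\tau_{n_k}\}$; because $\tau_n\to\pinf$ this is well defined, and $(n_k)_{k\in\NPP}$ is strictly increasing with $n_k\to\pinf$. On a block $n_k\le n\le n_{k+1}-1$, monotonicity gives $\tau_{n+1}^2\le\tau_{n_{k+1}}^2$, so a telescoping estimate yields
\[
\sum_{n=n_k}^{n_{k+1}-1}\left(1-\frac{\tau_n^2}{\tau_{n+1}^2}\right)
\ge\frac{1}{\tau_{n_{k+1}}^2}\sum_{n=n_k}^{n_{k+1}-1}\left(\tau_{n+1}^2-\tau_n^2\right)
=1-\frac{\tau_{n_k}^2}{\tau_{n_{k+1}}^2}\ge 1-\frac14=\frac34 ,
\]
the final inequality because $\tau_{n_{k+1}}\ge 2\tau_{n_k}$. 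Since all terms of the series are nonnegative, summing these block estimates over $k=1,\dots,K$ shows that the partial sum up to index $n_{K+1}-1$ is at least $3K/4$; letting $K\to\pinf$ gives $\sum_{n\in\NPP}(1-\tau_n^2/\tau_{n+1}^2)=\pinf$, which completes the argument.

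I do not expect a genuine obstacle here; the only step needing a little care is checking that the block endpoints $n_k$ are well defined and strictly increasing, which is exactly where $\lim\tau_n=\pinf$ enters. As an alternative to the blockwise estimate, one could invoke the classical equivalence ``$\prod_{n\in\NPP}a_n=0\iff\sum_{n\in\NPP}(1-a_n)=\pinf$'' for $a_n\in\left]0,1\right]$ together with the telescoping identity $\prod_{n=1}^{N}(\tau_n^2/\tau_{n+1}^2)=\tau_1^2/\tau_{N+1}^2\to 0$; the elementary argument above has the advantage of being self-contained.
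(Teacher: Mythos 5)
Your proof is correct, and it reaches the divergence of $\sum_{n\in\NPP}\bigl(1-\tau_n^2/\tau_{n+1}^2\bigr)$ by a genuinely different mechanism than the paper, even though the overall architecture coincides: both arguments first reduce to the second series via the termwise inequality $1-\bigl((\tau_n-1)/\tau_{n+1}\bigr)^2\geq 1-\tau_n^2/\tau_{n+1}^2\geq 0$, and both then exploit the telescoping of the numerators $\tau_{n+1}^2-\tau_n^2$. Where you differ is in the final divergence step. The paper sets $\xi_n\coloneqq\tau_{n+1}^2-\tau_n^2$ and $\sigma_n\coloneqq\sum_{k=1}^n\xi_k=\tau_{n+1}^2-\tau_1^2$ and invokes the classical Abel--Dini-type fact that $\sum_n\xi_n/\sigma_n=\pinf$ whenever $\sigma_n\uparrow\pinf$, proved by showing the tails $\sum_{k=1}^p\xi_{n+k}/\sigma_{n+k}\geq 1-\sigma_n/\sigma_{n+p}$ violate the Cauchy criterion; this forces an extra bookkeeping step (choosing $N$ with $\tau_n^2\geq 2\tau_1^2$ for $n\geq N$) to pass from the denominator $\sigma_n=\tau_{n+1}^2-\tau_1^2$ back to $\tau_{n+1}^2$. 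Your doubling-block decomposition proves the same divergence directly with the correct denominator $\tau_{n+1}^2$ from the start, so no such adjustment is needed, and each block visibly contributes at least $3/4$ to the sum; the only point requiring care, as you note, is the well-definedness of the block endpoints, which is exactly where $\tau_n\to\pinf$ enters (the same place it enters the paper's argument, via $\sigma_n\to\pinf$). Your alternative remark via the equivalence between $\prod_{n}a_n=0$ and $\sum_n(1-a_n)=\pinf$ for $a_n\in\left]0,1\right]$, applied to $a_n=\tau_n^2/\tau_{n+1}^2$ with $\prod_{n=1}^N a_n=\tau_1^2/\tau_{N+1}^2\to 0$, is also valid and is perhaps the shortest route if one is willing to cite that standard fact.
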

\begin{proof}
See \cref{app:blowsup}.
\end{proof}
\begin{lemma}
  \label{l:summable-liminf}
  Let $\fa{\alpha_{n}}{n \in \NPP} $ and 
  $\fa{\beta_{n}}{n \in \NPP} $ be sequences in $\RP$.
  Suppose that $\sum_{n \in \NPP}\alpha_{n} = \pinf$
  and that $\sum_{n \in \NPP}\alpha_{n}\beta_{n} < \pinf$.
  Then $\varliminf \beta_{n}  =0 $.
\end{lemma}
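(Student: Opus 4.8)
The plan is to argue by contradiction. Suppose that $\varliminf \beta_n = \ell > 0$ (the liminf exists in $\RPX$ since $\beta_n \geq 0$, and we assume it is strictly positive; note it could a priori be $\pinf$, but that only helps). Then there is an index $N \in \NPP$ such that $\beta_n \geq \ell/2 > 0$ for every $n \geq N$. First I would split the sum $\sum_{n \in \NPP} \alpha_n \beta_n$ at $N$: the finitely many terms with $n < N$ contribute a finite amount, so finiteness of the whole series forces $\sum_{n \geq N} \alpha_n \beta_n < \pinf$.

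Next, for $n \geq N$ I would use the lower bound $\beta_n \geq \ell/2$ to obtain
\begin{equation*}
\sum_{n \geq N} \alpha_n \beta_n \;\geq\; \frac{\ell}{2} \sum_{n \geq N} \alpha_n.
\end{equation*}
But $\sum_{n \in \NPP} \alpha_n = \pinf$ and again only finitely many terms are removed, so $\sum_{n \geq N} \alpha_n = \pinf$; hence the right-hand side is $\pinf$, contradicting $\sum_{n \geq N} \alpha_n \beta_n < \pinf$. Therefore $\varliminf \beta_n = 0$.

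There is essentially no obstacle here — the only point requiring a moment's care is the bookkeeping that discarding finitely many initial terms changes neither the divergence of $\sum \alpha_n$ nor the convergence of $\sum \alpha_n \beta_n$, and that $\varliminf \beta_n > 0$ genuinely yields a uniform positive lower bound on a tail of $(\beta_n)_{n \in \NPP}$. The nonnegativity hypothesis on both sequences is what makes the term-by-term comparison legitimate.
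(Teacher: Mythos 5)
Your proof is correct and follows essentially the same route as the paper's: argue by contradiction, extract a uniform positive lower bound $\beta_n\geq\beta>0$ on a tail, and compare $\sum_{n\geq N}\alpha_n\beta_n\geq\beta\sum_{n\geq N}\alpha_n=\pinf$. The only cosmetic difference is that the paper fixes an arbitrary $\beta\in\left]0,\varliminf\beta_n\right[$ rather than $\ell/2$, which handles the case $\varliminf\beta_n=\pinf$ without a separate remark.
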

\begin{proof}
  See \cref{app:liminf-sum}.
\end{proof}

The novelty of the following result lies in the fact that
the error sequence $\fa{\varepsilon_{n}}{n \in \NPP}$ need not
lie in $\RP$. 

\begin{lemma}
\label{l:summable-limit}
Let $\fa{\alpha_{n}}{n \in \NPP} $
be a sequence in $\RR$,
let $\fa{\beta_{n}}{n \in \NPP} $
be a sequence in $\RP$,
and let $\fa{\varepsilon_{n}}{n \in \NPP} $
be a sequence in $\RR$.
Suppose that $\fa{\alpha_{n}}{n \in \NPP} $
is bounded below, that 
\begin{equation}
\label{eq:seqs-cond}
\rbr{\forall n \in \NPP} 
\quad 
\alpha_{n+1} 
\leq \alpha_{n} -\beta_{n} +\varepsilon_{n},
\end{equation}
and that the series 
$\sum_{n \in \NPP}\varepsilon_{n}$ converges in $\RR$.
Then the following hold:
\begin{enumerate}
\item \label{i:convergence} 
$\fa{\alpha_{n}}{n \in \NPP} $ is convergent in $\RR$.
\item \label{i:summable} $\sum_{n \in \NPP}\beta_{n} < \pinf$.
\end{enumerate}
\end{lemma}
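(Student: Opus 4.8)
The plan is to neutralize the error terms by a standard shift that converts the recursion \cref{eq:seqs-cond} into a monotonicity statement. Since $\sum_{n \in \NPP}\varepsilon_{n}$ converges in $\RR$, its partial sums $E_{n} \coloneqq \sum_{k=1}^{n-1}\varepsilon_{k}$ (with $E_{1}\coloneqq 0$, so that $\varepsilon_{n}=E_{n+1}-E_{n}$) converge to some $E \in \RR$ and, in particular, form a bounded sequence. I would then set $\mu_{n}\coloneqq\alpha_{n}-E_{n}$, so that \cref{eq:seqs-cond} becomes $\mu_{n+1}\leq\mu_{n}-\beta_{n}$ for every $n \in \NPP$; since $\fa{\beta_{n}}{n \in \NPP}$ lies in $\RP$, the sequence $\fa{\mu_{n}}{n \in \NPP}$ is nonincreasing.

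To obtain \cref{i:convergence}, I would observe that $\fa{\alpha_{n}}{n \in \NPP}$ is bounded below and $\fa{E_{n}}{n \in \NPP}$ is bounded, hence $\fa{\mu_{n}}{n \in \NPP}$ is bounded below; being also nonincreasing, it converges to some $\mu \in \RR$, i.e.\ $\mu_{n}\downarrow\mu$. Therefore $\alpha_{n}=\mu_{n}+E_{n}\to\mu+E\in\RR$.

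For \cref{i:summable}, the inequality $\beta_{n}\leq\mu_{n}-\mu_{n+1}$ telescopes to $\sum_{n=1}^{N}\beta_{n}\leq\mu_{1}-\mu_{N+1}$ for every $N \in \NPP$; since $\mu_{n}\downarrow\mu$ we have $\mu_{N+1}\geq\mu$, whence $\sum_{n=1}^{N}\beta_{n}\leq\mu_{1}-\mu$, and letting $N\to\pinf$ gives $\sum_{n \in \NPP}\beta_{n}\leq\mu_{1}-\mu<\pinf$.

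There is no serious obstacle here; the only point that requires care — and the reason for the precise hypotheses — is that $\fa{\varepsilon_{n}}{n \in \NPP}$ need not be nonnegative, so the usual ``nonincreasing up to a summable nonnegative perturbation'' folklore does not apply verbatim. The shift by $E_{n}$ circumvents this, using only convergence of the series $\sum_{n \in \NPP}\varepsilon_{n}$ (equivalently, boundedness of its partial sums), and not positivity or absolute summability of its terms.
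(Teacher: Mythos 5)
Your proof is correct, and it takes a genuinely different route from the paper's. The paper's argument (see \cref{app:sum-limit}, modelled on \cite[Lemma~3.1]{Combettes-fejer-2001}) works directly with $\alpha\coloneqq\varliminf\alpha_{n}$: it first shows $\alpha<\pinf$ by telescoping $\alpha_{n+1}-\alpha_{n}\leq\varepsilon_{n}$ along a subsequence converging to $\alpha$, then invokes the Cauchy criterion for $\sum_{n\in\NPP}\varepsilon_{n}$ to prove $\varlimsup\alpha_{n}\leq\alpha+\delta$ for every $\delta\in\RPP$, and finally obtains \cref{i:summable} by telescoping $\beta_{n}\leq\alpha_{n}-\alpha_{n+1}+\varepsilon_{n}$. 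Your substitution $\mu_{n}\coloneqq\alpha_{n}-E_{n}$ reduces the perturbed inequality to the genuinely monotone case in one stroke --- $\mu_{n+1}\leq\mu_{n}-\beta_{n}$ with $\fa{\mu_{n}}{n\in\NPP}$ nonincreasing and bounded below --- after which both conclusions are immediate; this is shorter, avoids the subsequence and $\delta$-bookkeeping, and uses exactly the same hypotheses. One caveat on your closing remark: convergence of $\sum_{n\in\NPP}\varepsilon_{n}$ is \emph{not} equivalent to boundedness of its partial sums (consider $\varepsilon_{n}=\rbr{-1}^{n}$), and your proof of \cref{i:convergence} genuinely uses $E_{n}\to E$, not merely boundedness; only \cref{i:summable} would survive under boundedness alone. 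This side remark does not affect the validity of the proof itself.
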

\begin{proof}
See \cref{app:sum-limit}. 
\end{proof}
\begin{lemma}
  \label{l:seq.1}
  Let $\fa{\alpha_{n}}{n \in \NPP} $ be a
  sequence of real numbers.
  Consider the following statements:
  \begin{enumerate}    
    \item\label{i:3.1}  $\fa{n\alpha_{n}}{ n \in \NPP} $ converges in $\RR$.
    \item\label{i:3.2}  $\sum_{n \in \NPP}\alpha_{n}$ converges
      in $\RR$.
    \item\label{i:3.3} $\sum_{n \in \NPP} n \rbr{\alpha_{n}-\alpha_{n+1}} $
      converges in $\RR$.
  \end{enumerate}
  Suppose that two of
  the statements \cref{i:3.1,i:3.2,i:3.3} hold.
  Then the remaining one also holds.
\end{lemma}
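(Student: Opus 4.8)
The plan is to reduce the whole statement to a single summation-by-parts (Abel) identity linking the three partial sums in play. For $N \in \NPP$ set
\[
A_{N} \coloneqq \sum_{n=1}^{N}\alpha_{n}, \qquad B_{N} \coloneqq N\alpha_{N}, \qquad S_{N} \coloneqq \sum_{n=1}^{N} n\rbr{\alpha_{n}-\alpha_{n+1}},
\]
so that \cref{i:3.1} asserts that $\fa{B_{N}}{N\in\NPP}$ converges, \cref{i:3.2} that $\fa{A_{N}}{N\in\NPP}$ converges, and \cref{i:3.3} that $\fa{S_{N}}{N\in\NPP}$ converges.

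First I would establish
\[
\rbr{\forall N \in \NPP}\quad S_{N} = A_{N+1} - B_{N+1}.
\]
This comes from a direct computation: write $\sum_{n=1}^{N} n\rbr{\alpha_{n}-\alpha_{n+1}} = \sum_{n=1}^{N} n\alpha_{n} - \sum_{n=1}^{N} n\alpha_{n+1}$, reindex the second sum via $m = n+1$ to get $\sum_{n=1}^{N} n\alpha_{n+1} = \sum_{m=2}^{N+1}(m-1)\alpha_{m} = \sum_{m=2}^{N+1} m\alpha_{m} - \sum_{m=2}^{N+1}\alpha_{m}$, and then cancel the telescoping $m\alpha_{m}$ terms; what remains is exactly $\alpha_{1} + \sum_{m=2}^{N+1}\alpha_{m} - (N+1)\alpha_{N+1} = A_{N+1} - B_{N+1}$. (Alternatively, one verifies the identity by an easy induction on $N$.)

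Given the identity, the conclusion is immediate. Since $\fa{A_{N}}{N\in\NPP}$ converges if and only if the shifted sequence $\fa{A_{N+1}}{N\in\NPP}$ converges, and likewise for $B$, statements \cref{i:3.1,i:3.2,i:3.3} are respectively the convergence of $\fa{B_{N+1}}{N\in\NPP}$, of $\fa{A_{N+1}}{N\in\NPP}$, and of $\fa{S_{N}}{N\in\NPP}$, which by the identity equals $\fa{A_{N+1}-B_{N+1}}{N\in\NPP}$. Among three real sequences, one of which is the difference of the other two, convergence of any two forces convergence of the third; I would simply spell this out in the three cases ($S = A - B$, $A = S + B$, $B = A - S$) using the limit laws.

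There is no genuine obstacle here: the only point demanding care is the index bookkeeping in the reindexing step, namely the off-by-one that produces $A_{N+1}$ (not $A_{N}$) and the boundary term $(N+1)\alpha_{N+1}$ (not $N\alpha_{N}$). Everything else is a one-line consequence of the displayed identity.
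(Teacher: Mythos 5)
Your proof is correct and follows essentially the same route as the paper: both establish the identity $\sum_{k=1}^{n}k\rbr{\alpha_{k}-\alpha_{k+1}}=\alpha_{1}-\rbr{n+1}\alpha_{n+1}+\sum_{k=1}^{n}\alpha_{k+1}$ (the paper by writing $k\rbr{\alpha_k-\alpha_{k+1}}=k\alpha_k-\rbr{k+1}\alpha_{k+1}+\alpha_{k+1}$ and telescoping, you by reindexing) and then conclude via the limit laws. The index bookkeeping in your reindexing step checks out.
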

\begin{proof}
  See \cref{app:seq1}.
\end{proof}

The following result is stated in 
\cite[Problem~2.6.19]{Radulescu09};
we provide a proof  in \cref{app:seq2} for completeness.

\begin{lemma}
  \label{l:seq2}
  Let $\fa{\alpha_{n}}{n \in \NPP} $ be a
  decreasing sequence in $\RP$.
  Then 
  \begin{equation}
    \sum_{n \in \NPP}\alpha_{n } < \pinf
    \quad
    \iff
      \quad 
      \Bigl[\, 
        n\alpha_{n}  \to 0
        \text{ as } n\to \pinf
        \text{ and }
        \sum_{n \in \NPP}n\rbr{\alpha_{n}-\alpha_{n+1}}  
        < \pinf
      \,\Bigr].
  \end{equation}
\end{lemma}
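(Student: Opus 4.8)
The plan is to reduce everything to one summation-by-parts identity and then read off both implications from it. Writing $S_N \coloneqq \sum_{n=1}^{N}\alpha_n$ for $N \in \NPP$, a telescoping of the coefficients yields
\begin{equation*}
\rbr{\forall N \in \NPP}\qquad
\sum_{n=1}^{N} n\rbr{\alpha_n - \alpha_{n+1}}
= \sum_{n=1}^{N}\alpha_n - N\alpha_{N+1}
= S_N - N\alpha_{N+1}.
\end{equation*}
This is essentially the only computation in the argument; note that, because $\fa{\alpha_n}{n\in\NPP}$ is decreasing and lies in $\RP$, each summand $n\rbr{\alpha_n-\alpha_{n+1}}$ is nonnegative, so both $\fa{S_N}{N\in\NPP}$ and the partial sums of $\sum_{n\in\NPP}n\rbr{\alpha_n-\alpha_{n+1}}$ are nondecreasing.

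For the implication ``$\Leftarrow$'', suppose that $n\alpha_n \to 0$ and that $\sum_{n\in\NPP}n\rbr{\alpha_n-\alpha_{n+1}} < \pinf$. Since $N\alpha_{N+1} = \tfrac{N}{N+1}\rbr{N+1}\alpha_{N+1} \to 0$, the identity forces $S_N \to \sum_{n\in\NPP}n\rbr{\alpha_n-\alpha_{n+1}} \in \RP$; hence $\sum_{n\in\NPP}\alpha_n < \pinf$.

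For the implication ``$\Rightarrow$'', suppose that $S \coloneqq \sum_{n\in\NPP}\alpha_n < \pinf$. First I would show $n\alpha_n \to 0$: for $m < N$, monotonicity gives $\rbr{N-m}\alpha_N \leq \sum_{k=m+1}^{N}\alpha_k \leq \sum_{k > m}\alpha_k$, and restricting to $N \geq 2m$ (so that $N-m \geq N/2$) and letting $N \to \pinf$ then $m\to\pinf$ yields $\varlimsup_N N\alpha_N = 0$; since $\alpha_N \geq 0$ this is $\lim_N N\alpha_N = 0$, hence also $N\alpha_{N+1}\to 0$. Feeding this back into the identity, the right-hand side converges to $S$, and since its left-hand side is a nondecreasing sequence of partial sums, $\sum_{n\in\NPP}n\rbr{\alpha_n-\alpha_{n+1}}$ converges (with value $S$). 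This establishes the full conjunction on the right-hand side.

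The argument is essentially routine; the only place needing a genuine (though standard) estimate is the claim $n\alpha_n\to 0$ in the forward direction, and the one point to watch is that the backward direction really does use the hypothesis $n\alpha_n\to 0$ to make the boundary term $N\alpha_{N+1}$ vanish — it is precisely this term that records the gap between the two series.
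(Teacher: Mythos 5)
Your proposal is correct and follows essentially the same route as the paper: the paper's proof also rests on the summation-by-parts identity $\sum_{k=1}^{n}k\rbr{\alpha_k-\alpha_{k+1}}=\sum_{k=1}^{n}\alpha_k-n\alpha_{n+1}$ (packaged there as the ``two-out-of-three'' Lemma~\ref{l:seq.1}) together with the fact that a decreasing, summable, nonnegative sequence satisfies $n\alpha_n\to 0$. The only difference is that the paper cites this last fact to \cite[Problem~3.2.35]{Kaczor.Nowak-1}, whereas you supply the standard tail estimate $\rbr{N-m}\alpha_N\leq\sum_{k>m}\alpha_k$ yourself, which is correct and makes the argument self-contained.
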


The following variant
of Opial's lemma 
 \cite{Opial1967weak}
 will be required
in the sequel.

\begin{lemma}
  \label{l:opial.variant}
  Let $C $ be a nonempty subset of $\HH$,
  and let 
  $\fa{u_{n}}{n \in \NPP} 
  $ and $\fa{v_{n}}{n \in \NPP} $
  be sequences in $\HH$.
  Suppose that
  $u_{n} - v_{n} \to 0$,
  that every weak sequential
  cluster point of $\fa{v_{n}}{n \in \NPP} $
  lies in $C$,
  and that,
  for every $c \in C$,
  $\fa{\norm{u_{n}-c} }{n \in \NPP} $
  converges.
  Then 
  there exists $w \in C$
  such that
  $u_{n} \rightharpoonup w$
  and $v_{n}\rightharpoonup w$.
\end{lemma}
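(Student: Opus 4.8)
The plan is to run the classical Opial argument, adapted to the two-sequence setting. First I would record boundedness: since $C\neq\varnothing$, fix some $c\in C$; then $\fa{\norm{u_{n}-c}}{n\in\NPP}$ converges, hence is bounded, so $\fa{u_{n}}{n\in\NPP}$ is bounded, and since $u_{n}-v_{n}\to 0$, so is $\fa{v_{n}}{n\in\NPP}$. In particular, by weak sequential compactness of bounded subsets of the Hilbert space $\HH$, the sequence $\fa{u_{n}}{n\in\NPP}$ possesses at least one weak sequential cluster point; and because $u_{n}-v_{n}\to 0$, every weak sequential cluster point of $\fa{u_{n}}{n\in\NPP}$ is also a weak sequential cluster point of $\fa{v_{n}}{n\in\NPP}$, hence lies in $C$ by hypothesis.

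The heart of the matter is the uniqueness of the weak sequential cluster point of $\fa{u_{n}}{n\in\NPP}$. Suppose $w_{1}$ and $w_{2}$ are two such points, say $u_{k_{n}}\rightharpoonup w_{1}$ and $u_{l_{n}}\rightharpoonup w_{2}$ along subsequences; by the previous paragraph $w_{1},w_{2}\in C$. Then, for every $n\in\NPP$,
\[
\norm{u_{n}-w_{1}}^{2} - \norm{u_{n}-w_{2}}^{2}
= 2\scal{u_{n}}{w_{2}-w_{1}} + \norm{w_{1}}^{2} - \norm{w_{2}}^{2} .
\]
Since $w_{1},w_{2}\in C$, both $\fa{\norm{u_{n}-w_{1}}}{n\in\NPP}$ and $\fa{\norm{u_{n}-w_{2}}}{n\in\NPP}$ converge, so the left-hand side converges, and therefore $\fa{\scal{u_{n}}{w_{2}-w_{1}}}{n\in\NPP}$ converges as well. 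Evaluating its limit along $\fa{u_{k_{n}}}{n\in\NPP}$ and along $\fa{u_{l_{n}}}{n\in\NPP}$ yields $\scal{w_{1}}{w_{2}-w_{1}} = \scal{w_{2}}{w_{2}-w_{1}}$, i.e. $\norm{w_{1}-w_{2}}^{2}=0$, whence $w_{1}=w_{2}$.

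Finally, a bounded sequence in $\HH$ with exactly one weak sequential cluster point converges weakly to that point (cf.\ \cite{Bauschke-Combettes-2017}); calling it $w\in C$, we get $u_{n}\rightharpoonup w$, and then $v_{n}=u_{n}-(u_{n}-v_{n})\rightharpoonup w$ too, which is the claim. The only genuinely delicate point is the uniqueness step, and even there one merely invokes the standard Opial identity displayed above; everything else is soft functional analysis, so I expect no real obstacle beyond careful bookkeeping of subsequences.
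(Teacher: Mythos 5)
Your proof is correct and takes essentially the same route as the paper: the paper transfers the convergence of $\fa{\norm{u_{n}-c}}{n\in\NPP}$ to $\fa{\norm{v_{n}-c}}{n\in\NPP}$ and then invokes Opial's lemma (cited as Lemma~2.47 of \cite{Bauschke-Combettes-2017}) applied to $\fa{v_{n}}{n\in\NPP}$, whereas you transfer the cluster-point hypothesis from $\fa{v_{n}}{n\in\NPP}$ to $\fa{u_{n}}{n\in\NPP}$ and prove the Opial uniqueness step inline. The substance of your middle paragraph --- the identity $\norm{u_{n}-w_{1}}^{2}-\norm{u_{n}-w_{2}}^{2}=2\scal{u_{n}}{w_{2}-w_{1}}+\norm{w_{1}}^{2}-\norm{w_{2}}^{2}$ evaluated along two weakly convergent subsequences --- is precisely the content of the cited lemma, so the two arguments coincide up to whether that lemma is quoted or reproved.
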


\begin{proof}   
  For every $c \in C$,
  since $u_{n}-v_{n}\to 0$
  and $\fa{\norm{u_{n}-c} }{n \in \NPP}  $
  converges,
  we deduce that $\fa{\norm{v_{n}-c} }{n \in \NPP} $
  converges.
  In turn, because every 
  weak sequential cluster point
  of $\fa{v_{n}}{n \in \NPP} $ belongs 
  to $C$, \cite[Lemma~2.47]{Bauschke-Combettes-2017}
  yields the existence of $w \in C$
  satisfying $v_{n} \rightharpoonup w$.
  Therefore, because $u_{n}-v_{n}\to 0$,
  we conclude that $\fa{u_{n}}{n \in \NPP} $
  and $\fa{v_{n}}{n \in \NPP} $
  converge weakly to $w$.
\end{proof}

We next revisit
the notion of quasi-Fej\'{e}r
monotonicity in the Hilbert spaces setting 
studied in \cite{Combettes-fejer-2001}.
This plays a crucial role in
our analysis of \cref{p:convergence}.
Nevertheless, 
to fit our framework of \cref{p:convergence},
the error sequence $\fa{\varepsilon_{n}}{n \in \NPP} $
is not required to be positive 
in \cref{l:fejer}.
The proof is based on \cite[Proposition~3.3(iii)
and Proposition~3.10]{Combettes-fejer-2001}.

\begin{lemma}
  \label{l:fejer}
  Let $C$ be a nonempty
  subset of $\HH$,  let
  $\fa{u_{n}}{n \in \NPP} $ be a sequence in $\HH$,
  and let $\fa{\varepsilon_{n}}{n \in \NPP} $ be a
  sequence in $\RR$.
  Suppose that
  \begin{equation}
    \label{eq:fejer}
    \rbr{\forall c \in C} \rbr{\forall n \in \NPP} \quad
    \norm{u_{n+1}-c}^{2} \leq \norm{u_{n}-c}^{2} + \varepsilon_{n}, 
  \end{equation}
  and that $\sum_{n \in \NPP}\varepsilon_{n}$ converges in $\RR$.
  Then the following hold: 
  \begin{enumerate}
    \item\label{i:fejer.1} For every $c \in C$,
      the sequence $\fa{\norm{u_{n}-c}}{n \in \NPP} $
      converges in $\RR$.
    \item\label{i:fejer.2} Suppose that $\inte C \neq \varnothing$.
      Then $\fa{u_{n}}{n \in \NPP} $ converges strongly
      in $\HH$.
  \end{enumerate}
\end{lemma}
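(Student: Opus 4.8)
For \cref{i:fejer.1}, the plan is simply to invoke \cref{l:summable-limit}. Fix $c\in C$ and set $\alpha_{n}\coloneqq\norm{u_{n}-c}^{2}$ and $\beta_{n}\coloneqq 0$, keeping the given sequence $\fa{\varepsilon_{n}}{n\in\NPP}$. Then $\fa{\alpha_{n}}{n\in\NPP}$ is bounded below (by $0$), the estimate \cref{eq:fejer} is precisely \cref{eq:seqs-cond}, and $\sum_{n\in\NPP}\varepsilon_{n}$ converges by hypothesis; hence \cref{l:summable-limit}\cref{i:convergence} shows that $\fa{\norm{u_{n}-c}^{2}}{n\in\NPP}$ converges in $\RR$, and passing to square roots gives the convergence of $\fa{\norm{u_{n}-c}}{n\in\NPP}$. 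This part is immediate.

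For \cref{i:fejer.2}, I would show that $\fa{u_{n}}{n\in\NPP}$ is a Cauchy sequence, so that strong convergence follows from completeness of $\HH$. Pick $c\in\inte C$ and $\rho\in\RPP$ with $\ball{c}{\rho}\subseteq C$. The key device is to apply \cref{eq:fejer} at the translated points $c+z$ for all $z\in\HH$ with $\norm{z}\leq\rho$. Expanding $\norm{u_{n+1}-c-z}^{2}$ and $\norm{u_{n}-c-z}^{2}$ and cancelling the common term $\norm{z}^{2}$ yields
\[
2\scal{u_{n+1}-u_{n}}{z}\leq\norm{u_{n}-c}^{2}-\norm{u_{n+1}-c}^{2}+\varepsilon_{n}.
\]
Taking the supremum over $\menge{z}{\norm{z}\leq\rho}$ on the left-hand side, which equals $2\rho\norm{u_{n+1}-u_{n}}$ by Cauchy--Schwarz, gives
\[
2\rho\norm{u_{n+1}-u_{n}}\leq\norm{u_{n}-c}^{2}-\norm{u_{n+1}-c}^{2}+\varepsilon_{n}.
\]

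To finish, I would sum this last estimate from $k=n$ to $k=m-1$ for $m>n$, telescoping the squared-norm terms and using $\norm{u_{m}-c}^{2}\geq 0$:
\[
2\rho\sum_{k=n}^{m-1}\norm{u_{k+1}-u_{k}}\leq\norm{u_{n}-c}^{2}+\sum_{k=n}^{m-1}\varepsilon_{k}.
\]
Since $\sum_{k\in\NPP}\varepsilon_{k}$ converges, its partial sums are bounded, so letting $m\to\pinf$ (with $n\coloneqq 1$) shows $\sum_{k\in\NPP}\norm{u_{k+1}-u_{k}}<\pinf$; consequently $\fa{u_{n}}{n\in\NPP}$ is Cauchy and therefore converges strongly in $\HH$. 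This mirrors the scheme of \cite[Proposition~3.3(iii) and Proposition~3.10]{Combettes-fejer-2001}.

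The computational steps (the norm expansion, the Cauchy--Schwarz supremum, and the telescoping) are routine. The only point requiring genuine care, and precisely the reason the statement is not the textbook quasi-Fej\'{e}r lemma, is that $\fa{\varepsilon_{n}}{n\in\NPP}$ may take negative values, so one cannot telescope a manifestly monotone quantity; what rescues both parts is that $\sum_{n\in\NPP}\varepsilon_{n}$ \emph{converges}, which both supplies the bounded partial sums used in the summability estimate and is exactly the hypothesis feeding \cref{l:summable-limit} in \cref{i:fejer.1}. So I do not anticipate a serious obstacle, only the bookkeeping of signs.
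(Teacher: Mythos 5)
Your proposal is correct and follows essentially the same route as the paper: part \cref{i:fejer.1} is the identical appeal to \cref{l:summable-limit}\cref{i:convergence}, and for part \cref{i:fejer.2} your supremum of $\scal{u_{n+1}-u_{n}}{z}$ over the ball $\norm{z}\leq\rho$ produces exactly the inequality $2\rho\norm{u_{n+1}-u_{n}}\leq\norm{u_{n}-c}^{2}-\norm{u_{n+1}-c}^{2}+\varepsilon_{n}$ that the paper obtains by evaluating \cref{eq:fejer} at the extremal point $c-\rho\rbr{u_{n+1}-u_{n}}/\norm{u_{n+1}-u_{n}}$, after which your hand-telescoping is just the content of \cref{l:summable-limit}\cref{i:summable}. (Only a cosmetic remark: applying \cref{eq:fejer} at $c+z$ literally gives $2\scal{u_{n}-u_{n+1}}{z}$ on the left, but since $z$ ranges over a symmetric ball this sign is immaterial for the supremum.)
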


\begin{proof}
  \cref{i:fejer.1}:  
  This is a direct consequence of 
  \cref{l:summable-limit}\cref{i:convergence}.

  \cref{i:fejer.2}: 
  We follow along the lines of 
  \cite[Proposition~3.10]{Combettes-fejer-2001}.
  Let $v \in \inte C$ and $\rho \in \RPP$
  be such that $\ball{v}{\rho}\coloneqq \menge{ x \in \HH }{\norm{x-v} \leq \rho}  \subset C$.
  Define a sequence $\fa{v_{n}}{n \in \NPP} $
  in $C$ via 
  \begin{equation}
    \label{eq:vn.inte}
    \rbr{\forall n \in \NPP} \quad
    v_{n} \coloneqq
    \begin{cases}
      v, & \text{if~} u_{n+1}= u_{n}; \smallskip \\
      \displaystyle
      v -\rho \frac{u_{n+1}- u_{n}}{\norm{u_{n+1}-u_{n}} },
      &\text{otherwise}.
    \end{cases}
  \end{equation}
  We now verify that 
  \begin{equation}
    \label{eq:verify.inte}
    \rbr{\forall n \in \NPP} \quad
  \norm{u_{n+1}-v}^{2}
  \leq \norm{u_{n}-v}^{2} - 2\rho \norm{u_{n+1}- u_{n}}  
  +\varepsilon_{n}.
  \end{equation}
  Fix $n \in \NPP$.
  If $u_{n+1}=u_{n}$,
  then \cref{eq:fejer}
  implies that $\varepsilon_{n} \geq  0$,
  and therefore \cref{eq:verify.inte} holds.
  Otherwise, because $v_{n} \in C$,
  \cref{eq:fejer} yields
  $\norm{u_{n+1}-v_{n}}^{2} \leq  \norm{u_{n}-v_{n}}^{2} +\varepsilon_{n}$.
  In turn, using \cref{eq:vn.inte}, we obtain
  \begin{equation}
    \label{eq:expans.obtain}
    \norm*{\rbr{u_{n+1}-v} + \rho \frac{u_{n+1}-u_{n}}{\norm{u_{n+1}- u_{n}} }}^{2} 
    \leq 
    \norm*{\rbr{u_{n}-v} + \rho \frac{u_{n+1}-u_{n}}{\norm{u_{n+1}- u_{n}} }}^{2} 
    +\varepsilon_{n},
  \end{equation}
  and after expanding both sides
  and simplifying terms,
  we get \cref{eq:verify.inte}.
  Consequently, owing to \cref{eq:verify.inte}
  and the convergence of $\sum_{n \in \NPP}\varepsilon_{n}$,
  we derive from \cref{l:summable-limit}\cref{i:summable}
  that $\sum_{n \in \NPP}2\rho\norm{u_{n+1}-u_{n}} < \pinf$.
  Hence, by completeness of $\HH$, 
  $\fa{u_{n}}{n \in \NPP} $ converges strongly
to a point in $\HH$.
\end{proof}

We conclude this section with a simple identity. 
If $x$, $y$, and $z$ are in \HH, then
  \begin{equation}
    \label{e:points}
    \norm{x-y}^{2} + 2 \scal{x-y}{z-x} = \norm{z-y}^{2} -\norm{z-x}^{2}. 
  \end{equation}

\section{One-step results}
\label{pre}

The aim of this section is to present several results on 
performing just \emph{one step} of FISTA or MFISTA. 
This allows us to present subsequent convergence results more clearly. 
Recall that \cref{assump:1} is in force and 
(see \eqref{eq:T.and.hINTRO}) that 
\begin{equation}
      \label{eq:T.and.h}
	h = f + g \quad 
	\text{and}
	\quad 
      T = \prox{\gamma g}\circ\mathop{\rb[\big]{\Id - \mathop{\gamma {\grad{f}}}}}.
\end{equation}
Clearly, 
	\begin{equation}\label{e:important-note}
	\ran T 
      = \ran \rbr[\big]{\prox{\gamma g} \circ \mathop{\rb{\Id - \mathop{\gamma {\grad{f}}} }}}
	\subseteq \dom \partial g \subseteq \dom{g} = \dom{h}.
	\end{equation}

\begin{lemma}[Beck--Teboulle] \label{f:key-ineq}
	The following holds:
	\begin{equation}\label{e:key-ineq}
		\rb{\forall \rb{x,y} \in \HH \times \HH}
		\quad 
		\gamma^{-1}\scal{y-Ty}{x-y} + 
		\rb{2\gamma}^{-1}\norm{y-Ty}^{2}
		\leq h\rb{x} - h\rb{Ty} .
	\end{equation}
\end{lemma}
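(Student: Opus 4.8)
The plan is to derive \eqref{e:key-ineq} by combining three classical ingredients: the descent lemma applied to $f$, the gradient inequality coming from convexity of $f$, and the subdifferential inequality characterizing the proximal point $Ty$. Throughout, fix $(x,y) \in \HH\times\HH$ and abbreviate $p \coloneqq Ty$. Since $\grad{f}$ is $\beta$-Lipschitz and $\gamma \leq 1/\beta$ (\cref{assump:1}), the descent lemma gives $f(p) \leq f(y) + \scal{\grad{f}(y)}{p-y} + (2\gamma)^{-1}\norm{p-y}^{2}$; note that the bound $\gamma \leq 1/\beta$ is exactly what turns the usual constant $\beta/2$ into the $(2\gamma)^{-1}$ that must appear in \eqref{e:key-ineq}. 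Convexity of $f$ yields $f(y) \leq f(x) + \scal{\grad{f}(y)}{y-x}$, so substituting this into the previous line and merging the two inner products produces
\[
f(p) \leq f(x) + \scal{\grad{f}(y)}{p-x} + (2\gamma)^{-1}\norm{p-y}^{2}.
\]

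Next I would use the fact that $p = \prox{\gamma g}\bigl(y - \gamma\grad{f}(y)\bigr)$, which, by the characterization of the proximity operator of the convex, lower semicontinuous, proper function $g$, is equivalent to $\gamma^{-1}\bigl(y - \gamma\grad{f}(y) - p\bigr) \in \partial g(p)$. The subgradient inequality for $g$ evaluated at the point $x$ then reads $g(p) \leq g(x) + \gamma^{-1}\scal{p-y}{x-p} + \scal{\grad{f}(y)}{x-p}$. Adding this to the displayed estimate for $f(p)$, the two terms containing $\grad{f}(y)$ cancel, and, writing $h = f + g$, we obtain
\[
h(p) \leq h(x) + (2\gamma)^{-1}\norm{p-y}^{2} + \gamma^{-1}\scal{p-y}{x-p}.
\]

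It then remains only to rearrange the right-hand side. Writing $x - p = (x-y) - (p-y)$ gives $\scal{p-y}{x-p} = \scal{p-y}{x-y} - \norm{p-y}^{2}$ (equivalently, one may invoke \eqref{e:points}), so
\[
h(p) \leq h(x) + \gamma^{-1}\scal{p-y}{x-y} - (2\gamma)^{-1}\norm{p-y}^{2}.
\]
Since $p - y = Ty - y = -(y-Ty)$, we have $\scal{p-y}{x-y} = -\scal{y-Ty}{x-y}$ and $\norm{p-y} = \norm{y-Ty}$; moving $h(x)$ to the left-hand side and $h(p) = h(Ty)$ to the right-hand side then turns the last display into precisely \eqref{e:key-ineq}. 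I do not anticipate a genuine obstacle here: the whole argument is just a matter of tracking signs carefully through the two additions and ensuring the $\grad{f}(y)$ terms cancel; the only point requiring attention is the use of $\gamma \leq 1/\beta$ flagged above, which is where \cref{assump:1} truly enters.
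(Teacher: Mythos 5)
Your proposal is correct and follows essentially the same route as the paper's proof: the descent lemma with constant $\gamma^{-1}$, the gradient inequality for $f$, the subdifferential characterization of $\prox{\gamma g}$ (which the paper invokes via \cite[Proposition~12.26]{Bauschke-Combettes-2017}), cancellation of the $\grad{f}\rbr{y}$ terms upon adding, and the final rearrangement via \cref{e:points}. No gaps.
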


\begin{proof}
  See \cref{app:keylem}.  
\end{proof}

\begin{lemma}[one FISTA step]
	\label{l:1-fista-step}
	Let $\rb{y,x_{-}} \in \HH\times \HH$, 
	let $\tau$
	and $\tau_{+}$ be in 
    $ \left[1, \pinf \right[ $,
	and set
	\begin{equation}\label{e:one-step-dfn}
			x \coloneqq Ty,
			\quad 
			y_{+} \coloneqq x + \displaystyle 
				\frac{\tau -1}{\tau_{+}}\rb{x-x_{-}},
			\quad 
			\text{and~}
			x_{+} \coloneqq Ty_{+}.	
	\end{equation} 
	In addition, 
      let $z \in \dom{h}$,
	and 
	set 
	\begin{equation}
        \label{eq:one-step-u-mu}
	\left\{ 
		\begin{array}{ll}
			u & \coloneqq \tau x - \rb{\tau -1}x_{-} - z, \smallskip \\
			u_{+} & \coloneqq \tau_{+} x_{+} - \rb{\tau_{+} -1}x -z, \smallskip\\
			\mu & \coloneqq h\rb{x} - h\rb{z},\smallskip\\
			\mu_{+} & \coloneqq h\rb{x_{+}} - h\rb{z}. 
		\end{array}\right.
	\end{equation}
	Then the following hold:
	\begin{enumerate}
		\item\label{i:1step-1}
		\begin{math}
			h\rb{x_{+}}
			+\rb{2\gamma}^{-1}\norm{x_{+}-x}^{2}
			\leq h\rb{x} + 
			\rb{\tau-1}^{2}\rb{2\gamma}^{-1}
            \norm{x-x_{-}}^{2} / \tau^{2}_{+}.
		\end{math}
		\item\label{i:1step-2} \begin{math}
		\tau_{+}^{2}\mu_{+} + 
		\rb{2\gamma}^{-1}\norm{u_{+}}^{2}
		\leq 
		\tau_{+}\rb{\tau_{+}-1}\mu + 
		\rb{2\gamma}^{-1} \norm{u}^{2}.
		\end{math}
		\item\label{i:1step-3} Suppose that
          $\tau\leq\tau_{+}$,
          that 
		\begin{math}
          \tau_{+}\rb{\tau_{+} -1}\leq \tau^{2},
		\end{math}
		and that 
		$ \inf h > \minf$.
		Then 
		\begin{equation}
			\tau_{+}^{2}\mu_{+} + 
			\rb{2\gamma}^{-1}\norm{u_{+}}^{2}
			\leq 
			\tau^{2}\mu + \rb{2\gamma}^{-1}\norm{u}^{2}
			+ \tau_{+}\rb{ h\rb{z} - \inf{h} }.
		\end{equation}
	\end{enumerate}
\end{lemma}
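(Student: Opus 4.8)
The plan is to derive the three items in order, since (ii) builds on the Beck–Teboulle inequality applied at two cleverly chosen points, and (iii) is a purely algebraic manipulation of (ii) together with the hypotheses $\tau\le\tau_+$, $\tau_+(\tau_+-1)\le\tau^2$, and $\inf h>\minf$.

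For item \cref{i:1step-1}, I would apply \cref{f:key-ineq} with the pair $(x,y_+)$, i.e.\ use $\gamma^{-1}\scal{y_+ - Ty_+}{x - y_+} + (2\gamma)^{-1}\norm{y_+-Ty_+}^2 \le h(x) - h(Ty_+)$. Since $x_+ = Ty_+$, this reads $\gamma^{-1}\scal{y_+-x_+}{x-y_+} + (2\gamma)^{-1}\norm{y_+-x_+}^2 \le h(x)-h(x_+)$. Now substitute $y_+ - x = \tfrac{\tau-1}{\tau_+}(x-x_-)$ from \cref{e:one-step-dfn} and expand the scalar product using the elementary identity \cref{e:points} with the triple $(y_+, x_+, x)$: that identity gives $\norm{y_+-x_+}^2 + 2\scal{y_+-x_+}{x-y_+} = \norm{x-x_+}^2 - \norm{x-y_+}^2$. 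Plugging this in, rearranging, and using $\norm{x-y_+}^2 = \tfrac{(\tau-1)^2}{\tau_+^2}\norm{x-x_-}^2$ yields exactly \cref{i:1step-1}. The only care needed is signs when moving the $\norm{x-y_+}^2$ term to the other side; it contributes the claimed $(\tau-1)^2(2\gamma)^{-1}\norm{x-x_-}^2/\tau_+^2$ on the right, so the inequality goes the right way.

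For item \cref{i:1step-2}, the standard FISTA trick is to apply \cref{f:key-ineq} twice at the point $y_+$: once with the comparison point $x$ (to get a relation involving $h(x_+)-h(x)$, weighted by $\tau_+-1$) and once with the comparison point $z$ (to get $h(x_+)-h(z)$, weighted by $1$). Multiplying the first inequality by $(\tau_+-1)$ and the second by $1$, then adding, produces on the left side a term $\tau_+ h(x_+)$ plus quadratic terms in $y_+-x_+$, and on the right side $(\tau_+-1)h(x)$ and $h(z)$ plus a linear term $\gamma^{-1}\scal{y_+-x_+}{(\tau_+-1)(x-y_+)+(z-y_+)}$. Multiplying through by $\tau_+$ and completing the square — here one uses that $u_+ = \tau_+ x_+ - (\tau_+-1)x - z = \tau_+(x_+ - y_+) + \big((\tau_+-1)(y_+-x)+(y_+-z)\big)\cdot$(a rearrangement) and that $y_+-x = \tfrac{\tau-1}{\tau_+}(x-x_-)$, so that $(\tau_+-1)(y_+-x) = \tfrac{(\tau_+-1)(\tau-1)}{\tau_+}(x-x_-)$, and the definition of $u = \tau x - (\tau-1)x_- - z$ must be reconciled with this — I expect the identity $\tau_+ y_+ - (\tau_+ - 1)x = \tau x - (\tau-1)x_- $ to be the algebraic linchpin, giving $u_+ = \tau_+(x_+ - y_+) + u$. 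Writing $\norm{u_+}^2 = \norm{u}^2 + 2\tau_+\scal{x_+-y_+}{u} + \tau_+^2\norm{x_+-y_+}^2$ and matching term-by-term with the weighted sum of the two Beck–Teboulle inequalities (after multiplying by $\tau_+$) should close the identity and yield \cref{i:1step-2}. \textbf{This is the step I expect to be the main obstacle}: keeping the bookkeeping of the weights $\tau_+$ and $\tau_+-1$ straight, and verifying the "magic" identity $u_+ = \tau_+(x_+-y_+)+u$ precisely, is where sign errors creep in.

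For item \cref{i:1step-3}, I would start from \cref{i:1step-2} and bound $\tau_+(\tau_+-1)\mu$ from above. Write $\mu = h(x)-h(z)$. Since $\tau_+(\tau_+-1)\le\tau^2$, if $\mu\ge 0$ we immediately get $\tau_+(\tau_+-1)\mu \le \tau^2\mu$, which already gives the claim (with the extra nonnegative term $\tau_+(h(z)-\inf h)$ only helping). If $\mu<0$, i.e.\ $h(x)<h(z)$, then $\tau_+(\tau_+-1)\mu = \tau^2\mu + \big(\tau_+(\tau_+-1)-\tau^2\big)\mu \le \tau^2\mu + \big(\tau^2 - \tau_+(\tau_+-1)\big)(h(z)-h(x))$; using $h(x)\ge\inf h$ and $\tau^2-\tau_+(\tau_+-1)\ge 0$ (and, say, $\tau^2 \le \tau_+^2$ together with $\tau_+\ge 1$ to get a clean $\tau_+$ coefficient — one checks $\tau^2 - \tau_+(\tau_+-1) = \tau^2 - \tau_+^2 + \tau_+ \le \tau_+$ whenever $\tau\le\tau_+$) we bound this by $\tau^2\mu + \tau_+(h(z)-\inf h)$. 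Either way, substituting back into \cref{i:1step-2} gives $\tau_+^2\mu_+ + (2\gamma)^{-1}\norm{u_+}^2 \le \tau^2\mu + (2\gamma)^{-1}\norm{u}^2 + \tau_+(h(z)-\inf h)$, as required. The hypothesis $\inf h>\minf$ is exactly what makes $h(z)-\inf h$ a well-defined real number.
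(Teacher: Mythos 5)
Your proposal is correct and follows essentially the same route as the paper's proof: item \cref{i:1step-1} via \cref{f:key-ineq} at $(x,y_{+})$ combined with \cref{e:points}, item \cref{i:1step-2} via the two applications of \cref{f:key-ineq} at $(x,y_{+})$ and $(z,y_{+})$ with weights $\tau_{+}-1$ and $1$, multiplied by $\tau_{+}$ and closed by the identity $\tau_{+}y_{+}-(\tau_{+}-1)x-z=u$ (equivalently your $u_{+}=\tau_{+}(x_{+}-y_{+})+u$, which is exactly right), and item \cref{i:1step-3} by bounding $(\tau_{+}^{2}-\tau_{+}-\tau^{2})\mu$ using $\mu\geq\inf h-h(z)$ and $\tau\leq\tau_{+}$. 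The paper handles \cref{i:1step-3} in one line without your case split on the sign of $\mu$, but the computation is the same.
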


\begin{proof}
	First, 
	since $z \in \dom{h}$, we get 
      from \cref{e:important-note}\&\cref{e:one-step-dfn}\&\cref{eq:one-step-u-mu}
	that 
	$\mu\in \RR$ and $\mu_{+} \in \RR$.
	Next, 
	because $x_{+} = Ty_{+}$,  
	we derive from 
	 \cref{e:key-ineq}
       (applied to $\rb{x,y_{+}}$) 
	that 
	\begin{equation}\label{e:1st-ineq}
	\mu-\mu_{+} = h\rb{x} - h\rb{x_{+}} 
		\geq 
		\gamma^{-1}\scal{y_{+} - x_{+}}{x-y_{+}}
		+ \rb{2\gamma}^{-1}\norm{y_{+}-x_{+}}^{2}.
	\end{equation}

	\cref{i:1step-1}: 
	We derive from \cref{e:1st-ineq}, \cref{e:points},
	and \cref{e:one-step-dfn} 
	that 
	\begin{subequations}
		\begin{align}
		h\rb{x} - h\rb{x_{+}}
		& \geq \rb{2\gamma}^{-1}\rb*{
			\norm{x-x_{+}}^{2}
			- \norm{y_{+} -x_{+}}^{2} - \norm{x-y_{+}}^{2}	}
		+\rb{2\gamma}^{-1}\norm{y_{+}-x_{+}}^{2} \\
		& = \rb{2\gamma}^{-1}\rb*{ \norm{x-x_{+}}^{2} 
			-\rb*{\frac{\tau -1}{\tau_{+}}}^{\tsp 2} \norm{x-x_{-}}^{2} },
		\end{align}
	\end{subequations}
    and thus, since $h\rbr{x_{+}}  \in \RR$,
    the conclusion follows.

	\cref{i:1step-2}: 
	Since $x_{+} = Ty_{+}$, 
	applying \cref{e:key-ineq}
	to $\rb{z,y_{+}}$
	gives 
		\begin{equation}\label{e:2nd-ineq}
          {-}\mu_{+} = h\rb{z} - h\rb{x_{+}}
		\geq 
		\gamma^{-1}\scal{y_{+} - x_{+}}{z-y_{+}}
		+ \rb{2\gamma}^{-1}\norm{y_{+}-x_{+}}^{2}.
		\end{equation}
	Therefore, because  
	$\tau_{+} -1 \geq 0$ by assumption, 
	it follows from \cref{e:1st-ineq} 
	and \cref{e:2nd-ineq} that 
	\begin{subequations}
		\label{e:last-ineq}
			\begin{align}
		\rb{\tau_{+} -1}\mu - \tau_{+} \mu_{+}
		& = \rb{\tau_{+} -1}\rb{\mu-\mu_{+}} + \rb{-\mu_{+}} \\
		& \geq \gamma^{-1}
		\scal{y_{+} -x_{+} }{ \rb{\tau_{+}-1}\rb{x-y_{+}} + \rb{z-y_{+}} }
		+ \rb{2\gamma}^{-1}\tau_{+} \norm{y_{+}-x_{+}}^{2} \\
		& = \gamma^{-1}
		\scal{y_{+} - x_{+} }{ \rb{\tau_{+} -1}x - \tau_{+} y_{+} +z }
		+  \rb{2\gamma}^{-1}\tau_{+} \norm{y_{+}-x_{+}}^{2}. 
		\end{align}
	\end{subequations}
	In turn, 
	on the one hand, 
	multiplying both  
	sides of \cref{e:last-ineq}
	by $\tau_{+} >0$,
	we infer from \cref{e:points} 
	(applied to $\rb{\tau_{+}y_{+}, \tau_{+} x_{+}, \rb{\tau_{+} -1}x  +z}$) 
	and the very definition of $u_{+}$
	that 
	\begin{subequations}
	\begin{align}
		\tau_{+}\rb{\tau_{+} -1}\mu - \tau_{+}^{2}\mu_{+}
		& \geq 
		\gamma^{-1}\scal{ \tau_{+} y_{+} - \tau_{+} x_{+}
}{ \rb{\tau_{+} -1}x+z - \tau_{+} y_{+}}
		+ \rb{2\gamma}^{-1}\norm{\tau_{+}\rb{y_{+} - x_{+}}}^{2} \\
		& = \rb{2\gamma}^{-1}
            \rb[\big]{\norm{ \rb{\tau_{+} -1}x  +z - \tau_{+} x_{+} }^{2} - 
			\norm{ \rb{\tau_{+} -1}x  +z  - \tau_{+} y_{+} }^{2}} \\
		& = \rb{2\gamma}^{-1}\rb[\big]{\norm{u_{+}}^{2}  -  
		\norm{ \rb{\tau_{+} -1}x  +z  - \tau_{+} y_{+} }^{2}}.
	\label{e:nearly-done}
	\end{align} 
	\end{subequations}
	On the other hand,
	since $\tau_{+} y_{+} = 
	\tau_{+} x + \rb{\tau -1}\rb{x-x_{-}}$
	due to \cref{e:one-step-dfn},
	the definition of $u$
      yields 
		\begin{equation}
			\tau_{+} y_{+} - \rb{\tau_{+} -1}x -z
			= \tau_{+} x + \rb{\tau -1}\rb{x-x_{-}} 
			- \rb{\tau_{+} -1}x -z
			= \tau x - \rb{\tau -1}x_{-} -z 
			= u.
		\end{equation}	
	Altogether,  
	\begin{math}
		\rb{2\gamma}^{-1}\rb{\norm{u_{+}}^{2}  
		 - \norm{u}^{2} } \leq 
         \tau_{+}\rb{\tau_{+}-1}\mu - \tau_{+}^{2}\mu_{+},
	\end{math}
      which implies the desired conclusion.
	
	\cref{i:1step-3}: 
	Since $\mu = h\rb{x} - h\rb{z} \geq \inf{h} - h\rb{z} > \minf$
	and, by  assumption, 
	$\tau_{+}^{2} -\tau_{+} -\tau^{2} \leq 0$,
	we deduce that 
	\begin{math}
		\rb{\tau_{+}^{2} -\tau_{+} -\tau^{2}}\mu
		\leq \rb{\tau_{+}^{2} -\tau_{+} -\tau^{2}}\rb{\inf{h} - h\rb{z}}
		= \rb{\tau^{2} + \tau_{+} - \tau_{+}^{2} }\rb{h\rb{z} - \inf{h} }.
	\end{math}
	Hence, because $0 < \tau \leq \tau_{+}$
	and $h\rb{z} - \inf{h} \geq 0$,
	it follows that 
	$	\rb{\tau_{+}^{2} -\tau_{+} -\tau^{2}}\mu 
	\leq \rb{\tau^{2} + \tau_{+} - \tau_{+}^{2} }\rb{h\rb{z} - \inf{h} }
	\leq \tau_{+} \rb{h\rb{z} - \inf {h} } .$
	Consequently, \cref{i:1step-2} 
	implies that 
		\begin{subequations}
			\begin{align}
			\tau_{+}^{2}\mu_{+} + 
			\rb{2\gamma}^{-1}\norm{u_{+}}^{2}
			& \leq \tau_{+}\rb{\tau_{+}-1}\mu + \rb{2\gamma}^{-1}\norm{u}^{2}\\
			& =
			\tau^{2}\mu +\rb{2\gamma}^{-1}\norm{u}^{2}
			+\rb[\big]{\tau_{+}^{2} -\tau_{+} - \tau^{2} }\mu \\
			& \leq \tau^{2}\mu +\rb{2\gamma}^{-1}\norm{u}^{2}
			+ \tau_{+} \rb{h\rb{z} - \inf{h}},
			\end{align}	
		\end{subequations}
	as required.
\end{proof}

The analysis of the following lemma 
follows the lines of \cite[Theorem~5.1]{BeckTeboulle-MFISTA}.

\begin{lemma}[one MFISTA step]
  \label{l:MFISTA.1step}
  Let $\rbr{y,x_{-}} \in \HH \times \HH$,
  let $\tau$ and $\tau_{+}$
  be in $\left[1,\pinf \right[$,
  and set
  \begin{equation}
    \label{eq:defn.MFISTA}
    \left\{ 
      \begin{array}{ll}
     z& \coloneqq Ty, \smallskip \\
    x & \coloneqq \begin{cases}
      x_{-}, & \text{if~} h\rbr{x_{-}} \leq h\rbr{z}; \\
      z, &\text{otherwise},
    \end{cases} \smallskip \\
    y_{+}&\coloneqq
   \displaystyle x + \frac{\tau}{\tau_{+}}\rbr{z - x}  + 
    \frac{\tau-1}{\tau_{+}}\rbr{x-x_{-}} ,\smallskip \\
      z_{+}& \coloneqq Ty_{+}, \smallskip \\
      x_{+}& \coloneqq \begin{cases}
        x, & \text{if~} h\rbr{x} \leq h\rbr{z_{+}}; \\
        z_{+}, &\text{otherwise}.
    \end{cases}    
\end{array}\right.
  \end{equation}
  Furthermore, let $w \in \dom h$, and define
  \begin{equation}
    \label{eq:MFISTA.defn2}
  \left\{
    \begin{array}{ll}
      u & \coloneqq \tau z - \rbr{\tau -1} x_{-} -w, \smallskip\\
      u_{+} & \coloneqq \tau_{+}z_{+} - \rbr{\tau_{+} -1} x -w, \smallskip \\
      \mu & \coloneqq h\rbr{x} -  h\rbr{w} , \smallskip \\
      \mu_{+} & \coloneqq h\rbr{x_{+}}  - h\rbr{w} .
    \end{array}
  \right.
  \end{equation}
  Then the following hold:
\begin{enumerate}
  \item\label{i:mfista.1stepa}
\begin{math}
  h\rbr{x_{+}} +\rbr{2\gamma}^{-1}\norm{ z_{+} - x}^{2} 
  \leq h\rbr{x} +\rbr{2\gamma}^{-1} 
  \tau^{2} \norm{ z- x_{-}}^{2}/\tau_{+}^{2}.
\end{math}
  \item \label{i:mfista.1stepb}
    \begin{math}
    \tau_{+}^{2}\mu_{+} + \rbr{2\gamma}^{-1}\norm{u_{+}}^{2}
    \leq 
    \tau_{+}\rbr{\tau_{+}-1}\mu + \rbr{2\gamma}^{-1}\norm{u}^{2}  . 
    \end{math}
\end{enumerate}
\end{lemma}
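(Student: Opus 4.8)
The plan is to mirror the proof of \cref{l:1-fista-step}, adapting each step to the MFISTA updates in \cref{eq:defn.MFISTA}. The key structural facts we will exploit are that, by construction, $h(x)\leq h(z)$ and $h(x_{+})\leq h(z_{+})$, and also $h(x)\leq h(x_{-})$ and $h(x_{+})\leq h(x)$ when the corresponding branch is taken; consequently $x,x_+\in\dom h$ and $\mu,\mu_+\in\RR$ via \cref{e:important-note}. Since $z_+ = Ty_+$, applying \cref{e:key-ineq} to the pair $(x,y_+)$ yields
\begin{equation*}
  h(x) - h(z_+) \geq \gamma^{-1}\scal{y_+ - z_+}{x - y_+} + (2\gamma)^{-1}\norm{y_+ - z_+}^2,
\end{equation*}
and applying it to $(w,y_+)$ yields the analogous inequality with $x$ replaced by $w$, i.e.\ a lower bound for $-\mu_+ = h(w) - h(z_+)$ (here using $h(x_+)\leq h(z_+)$ so that $-\mu_+ \geq h(w) - h(z_+)$).

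For \cref{i:mfista.1stepa}, I would combine the first displayed inequality above with the identity \cref{e:points} applied to the triple $(x, x_+, z_+)$-analogue, more precisely to $(x, z_+, y_+)$: writing $\norm{x - z_+}^2 - \norm{y_+ - z_+}^2 - \norm{x - y_+}^2 = 2\scal{y_+ - z_+}{x - y_+} - \text{(correction)}$ in the same way as in \cref{i:1step-1}. The new ingredient is the value of $y_+ - x$: from \cref{eq:defn.MFISTA}, $y_+ - x = (\tau/\tau_+)(z - x) + ((\tau-1)/\tau_+)(x - x_-)$, but in the relevant telescoping the quantity that appears is $\tau_+(y_+ - x) = \tau(z-x) + (\tau-1)(x-x_-) = \tau z - \tau x_- + \dots$; I would track this carefully so that the $\norm{x - y_+}^2$ term produces exactly $\tau^2\norm{z - x_-}^2/\tau_+^2$ after accounting for which branch defines $x$. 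Then, since $h(x_+)\leq h(z_+)$, replacing $h(z_+)$ by $h(x_+)$ on the left only strengthens the inequality, giving the stated bound. The arithmetic here is the analogue of the short computation in the proof of \cref{i:1step-1}, just with the extra $(z-x)$ term in $y_+$, so I do not expect conceptual difficulty, only bookkeeping.

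For \cref{i:mfista.1stepb}, the approach is exactly that of \cref{i:1step-2}: since $\tau_+ - 1\geq 0$, form the combination $(\tau_+ - 1)(\mu - \mu_+) + (-\mu_+)$ using $h(x)-h(z_+)\geq\cdots$ (with $h(x)-h(z_+)\geq \mu - \mu_+$, again via $h(x_+)\leq h(z_+)$) and the lower bound for $-\mu_+$, obtaining
\begin{equation*}
  (\tau_+ - 1)\mu - \tau_+\mu_+ \geq \gamma^{-1}\scal{y_+ - z_+}{(\tau_+ - 1)x - \tau_+ y_+ + w} + (2\gamma)^{-1}\tau_+\norm{y_+ - z_+}^2.
\end{equation*}
Multiplying through by $\tau_+ > 0$ and invoking \cref{e:points} applied to $(\tau_+ y_+,\ \tau_+ z_+,\ (\tau_+ - 1)x + w)$ converts the right-hand side into $(2\gamma)^{-1}(\norm{u_+}^2 - \norm{(\tau_+-1)x + w - \tau_+ y_+}^2)$ by the definition of $u_+$. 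The crucial check is that $\tau_+ y_+ - (\tau_+ - 1)x - w = u$: from the formula for $y_+$, $\tau_+ y_+ = \tau_+ x + \tau(z - x) + (\tau - 1)(x - x_-) = \tau z - (\tau-1)x_- + \bigl(\tau_+ x - \tau x + (\tau-1)x\bigr) = \tau z - (\tau-1)x_- + (\tau_+ - 1)x$, so $\tau_+ y_+ - (\tau_+ - 1)x - w = \tau z - (\tau - 1)x_- - w = u$. This identity is precisely where the extra $(\tau/\tau_+)(z-x)$ term in the MFISTA update is designed to make things work, and verifying it is the main obstacle — everything else is the same algebra as in \cref{l:1-fista-step}\cref{i:1step-2}. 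Rearranging gives $(2\gamma)^{-1}(\norm{u_+}^2 - \norm{u}^2)\leq \tau_+(\tau_+ - 1)\mu - \tau_+^2\mu_+$, which is the claim.

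Throughout, the one genuinely new subtlety compared to \cref{l:1-fista-step} is justifying the passage from $z_+$ to $x_+$ (and from $z$ to $x$): in the FISTA lemma the iterate $Ty_+$ \emph{is} the next iterate, whereas here $x_+$ is either $z_+ = Ty_+$ or the previous $x$, whichever has smaller $h$-value. I would handle this uniformly by noting $h(x_+) = \min\{h(x), h(z_+)\}\leq h(z_+)$, so every inequality of the form ``$\cdots\leq h(x) - h(z_+)$'' upgrades to ``$\cdots\leq h(x) - h(x_+) = \mu - \mu_+$'', and similarly ``$\cdots\leq h(w) - h(z_+)$'' upgrades to ``$\cdots\leq h(w) - h(x_+) = -\mu_+$''; likewise $h(x)\leq h(z)$ (resp.\ $h(x)\le h(x_-)$) controls the $\norm{z - x_-}$ term in part \cref{i:mfista.1stepa}. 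No minimizer or $\inf h > \minf$ hypothesis is needed for either item, consistent with the statement.
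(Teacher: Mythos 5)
Your proposal is correct and follows essentially the same route as the paper's proof: apply \cref{f:key-ineq} at $(x,y_{+})$ and at $(w,y_{+})$, upgrade $h(z_{+})$ to $h(x_{+})$ via $h(x_{+})\leq h(z_{+})$, compute $y_{+}-x$ branchwise to bound $\norm{x-y_{+}}$ by $(\tau/\tau_{+})\norm{z-x_{-}}$, and verify the identity $\tau_{+}y_{+}-(\tau_{+}-1)x-w=u$ before invoking \cref{e:points}. Two cosmetic slips only: the parenthetical ``$h(x)-h(z_{+})\geq\mu-\mu_{+}$'' has the inequality reversed (your final paragraph states the correct direction), and the $\norm{x-y_{+}}^{2}$ term is bounded \emph{above} by $\tau^{2}\norm{z-x_{-}}^{2}/\tau_{+}^{2}$ rather than equal to it, since in the branch $x=z$ the coefficient is $(\tau-1)/\tau_{+}$.
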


\begin{proof}
  First, 
  since $z_{+} = Ty_{+}$, 
  using \cref{e:key-ineq}
  with $\rbr{x,y_{+}} $
  and \cref{e:points}
  with $\rbr{y_{+},z_{+},x} $
  yields
\begin{subequations}
  \label{eq:mfista.12ineq}
  \begin{align}
  \label{eq:mfista-1ineq}
  h\rbr{x}  - h\rbr{z_{+}} 
  = h\rbr{x} - h\rbr{Ty_{+}} 
  & \geq \gamma^{-1} \scal{y_{+} - z_{+}}{x-y_{+}} + 
  \rbr{2\gamma}^{-1}\norm{y_{+} - z_{+}}^{2}.
  \\
  & = \rbr{2\gamma}^{-1}\rbr*{ 
  \norm{x-z_{+}}^{2} - \norm{x-y_{+}}^{2}}.\label{eq:mfista.2ineq}
\end{align}
\end{subequations}

  \cref{i:mfista.1stepa}:
  On the one hand,
  by the very definition of $x_{+}$
  and \cref{eq:mfista.12ineq}, 
\begin{math}
  h\rbr{x} - h\rbr{x_{+}} 
  \geq  h\rbr{x}  - h\rbr{z_{+}} 
  \geq \rbr{2\gamma}^{-1}\rbr{\norm{x-z_{+}} - \norm{x-y_{+}}^{2} }, 
\end{math}
and thus, 
\begin{equation}
  \label{eq:mfista.3ineq}
  h\rbr{x_{+}}  + 
\rbr{2\gamma}^{-1}\norm{x-z_{+}}^{2} 
  \leq h\rbr{x} +\rbr{2\gamma}^{-1}\norm{x-y_{+}}^{2}. 
\end{equation}
On the other hand, 
due to \cref{eq:defn.MFISTA},
\begin{subequations}
  \begin{align}
  \label{eq:mfista.yx}
  y_{+} - x
  & = \frac{\tau}{\tau_{+}}\rbr{z-x} + \frac{\tau-1}{\tau_{+}}\rbr{x-x_{-}} \\
  & = 
\begin{cases}
\displaystyle \frac{\tau}{\tau_{+}}\rbr{z-x_{-}}  + 
  \frac{\tau-1}{\tau_{+}}\rbr{x_{-} - x_{-}} , 
  &\text{if~} h\rbr{x_{-}} \leq h\rbr{z} ; \medskip \\
  \displaystyle 
  \frac{\tau}{\tau_{+}}\rbr{z - z}  + 
  \frac{\tau-1}{\tau_{+}}\rbr{z-x_{-}}, &\text{otherwise}
\end{cases} \\
  & = \begin{cases}
     \displaystyle 
     \frac{\tau}{\tau_{+}}\rbr{z-x_{-}}, & \text{if~} h\rbr{x_{-}} \leq  h\rbr{z};
     \medskip \\
 \displaystyle
     \frac{\tau-1}{\tau_{+}}\rbr{z-x_{-}} , &\text{otherwise}, 
  \end{cases}
\end{align}
\end{subequations}
and since $\tau\geq 1$, it follows that 
\begin{equation}
  \label{eq:mfista.4ineq}
  \norm{y_{+}-x} \leq \frac{\tau}{\tau_{+}}\norm{z-x_{-}}.
\end{equation}
Altogether, \cref{eq:mfista.3ineq} and \cref{eq:mfista.4ineq}
yield the desired result.

\cref{i:mfista.1stepb}: 
Applying \cref{e:key-ineq}
to the pair $\rbr{w,y_{+}} $
and noticing that $z_{+} = Ty_{+}$, we get
\begin{equation}
  \label{eq:mfista.5ineq}
  h\rbr{w}  - h\rbr{z_{+}} 
  \geq \gamma^{-1}\scal{y_{+} - z_{+}}{w-y_{+}} 
  +\rbr{2\gamma}^{-1}\norm{y_{+} - z_{+}}^{2}.
\end{equation}
In turn, since $\tau_{+}\geq 1$, 
the very definition of $x_{+}$, 
\cref{eq:mfista.12ineq}, and 
\cref{eq:mfista.5ineq}
imply that 
\begin{subequations}
  \begin{align}
    \rbr{\tau_{+}-1} \mu - \tau_{+}\mu_{+}
    & = \rbr{\tau_{+}-1} \rbr*{h\rbr{x} -h\rbr{w} } 
    -\tau_{+} \rbr*{h\rbr{x_{+}}  - h\rbr{w} } \\
    & = \rbr{\tau_{+}-1} h\rbr{x} + h\rbr{w} -\tau_{+}h\rbr{x_{+}} \\
    & \geq  \rbr{\tau_{+} -1} h\rbr{x} +h\rbr{w} - \tau_{+}h\rbr{z_{+}} \\
    & = \rbr{\tau_{+}-1} \rbr{h\rbr{x} -h\rbr{z_{+}}  } 
    + h\rbr{w} -h\rbr{z_{+}} \\ 
    & \geq  \rbr{2\gamma}^{-1}\tau_{+}\norm{y_{+}-z_{+}}^{2}
    +\gamma^{-1}\scal{y_{+}- z_{+}}{ \rbr{\tau_{+}-1} \rbr{x-y_{+}} +w-y_{+} } 
    \\
    & =   \rbr{2\gamma}^{-1}\tau_{+}\norm{y_{+}-z_{+}}^{2}
    +\gamma^{-1}\scal{y_{+}-z_{+}}{w+\rbr{\tau_{+}-1} x-\tau_{+}y_{+}}. 
  \end{align} 
\end{subequations}
Thus, 
since $\tau_{+} >0$,
it follows 
from \cref{e:points} 
(applied to $\rbr{\tau_{+}y_{+},\tau_{+}z_{+},w+\rbr{\tau_{+}-1} x} $)
that 
  \begin{subequations}
    \label{eq:mfista.6abineq}
    \begin{align}
      \tau_{+}\rbr{\tau_{+}-1} \mu -\tau_{+}^{2}\mu_{+} 
      & \geq  \rbr{2\gamma}^{-1}\tau_{+}^{2}\norm{y_{+}-z_{+}}^{2}
      +\gamma^{-1}\scal{\tau_{+}y_{+}-\tau_{+}z_{+} }{ w + \rbr{\tau_{+}-1} x
      -\tau_{+}y_{+}} \\
      & = \rbr{2\gamma}^{-1}\rbr[\big]{
        \norm{\tau_{+}z_{+}-\rbr{\tau_{+}-1} x -w}^{2}
        -\norm{\tau_{+}y_{+} -\rbr{\tau_{+}-1} x-w}^{2}
      } \label{eq:mfista.6ineq}.
    \end{align} 
  \end{subequations}
  Furthermore, by 
  the definition of $y_{+}$,
  we have 
  \begin{math}
    \tau_{+}y_{+} = 
    \tau_{+}x + \tau\rbr{z-x}  +\rbr{\tau -1} \rbr{x-x_{-}} 
    = \rbr{\tau_{+}-1} x + \tau z - \rbr{\tau-1} x_{-},
  \end{math}
  which asserts that  
  $\tau_{+}y_{+} - \rbr{\tau_{+}-1} x = \tau z - \rbr{\tau-1} x_{-}$.
  Combining this and \cref{eq:mfista.6abineq}
  entails that 
  \begin{equation}
    \tau_{+}\rbr{\tau_{+}-1} \mu
    -\tau_{+}^{2}\mu_{+}
    \geq  \rbr{2\gamma}^{-1}
    \rbr[\big]{ \norm{\tau_{+}z_{+} - \rbr{\tau_{+}-1}x -w }^{2} 
    -\norm{\tau z - \rbr{\tau-1} x_{-}-w}^{2}},
  \end{equation}
  which completes the proof.
\end{proof}

\section{The parameter sequence}

\label{sec:parseq}

A central ingredient of FISTA and MFISTA is the parameter sequence
$\fa{\tau_{n}}{n \in \NPP}$. In this section, we present
various properties of the parameter sequence as well as examples. 
From this point onwards, we will assume the following:
\begin{assumption}\label{assump:2}
	We assume 
      that 
	$\fa{\tau_{n}}{n \in \NPP}$
	is 
	a sequence
    of real numbers 
    such that
	\begin{empheq}[ box = \mybluebox]{equation}\label{e:cond-tau}
      \tau_{1} \in \left[1 ,\pinf\right[,
          \quad
      \rb{\forall n \in \NPP}
      ~
        \tau_{n+1} \in \sbrc*{\tau_{n},
        \frac{1+\sqrt{1+4\tau_{n}^{2}}}{2}}, 
          \quad
          \text{and}
          \quad 
          \tau_{\infty} \coloneqq \sup_{k\in\NPP} \tau_k. 
	\end{empheq} 
\end{assumption}

\begin{remark}
A  few observations regarding \cref{assump:2} are in order. 
  \label{rm:cond-tau}
  \begin{enumerate}
    \item It is clear from
      \cref{e:cond-tau}
      that 
      \begin{equation}
        \label{eq:taun.geq1}
        \rbr{\forall n \in \NPP} 
        \quad \tau_{n} \geq  1.
      \end{equation}
    \item 
  Because $\fa{\tau_{n}}{n \in \NPP}$
  is increasing,
  \begin{equation}
    \label{eq:lim.tau}
    \tau_{n} \uparrow \tau_{\infty} \overset{\cref{eq:taun.geq1}}{\in}
    \left[1, \pinf \right].
  \end{equation}
    \item     
  Due to 
 \cref{eq:taun.geq1}
 and 
  the assumption that $\rbr{\forall n \in \NPP} ~ \tau_{n+1}\leq 
  \rbr[\big]{1+\sqrt{1+4\tau_{n}^{2}}}/2 $,
  it is straightforward
  to verify that 
  \begin{equation}
    \label{eq:rewrite-cont-tau}
    \rbr{\forall n \in \NPP} 
    \quad
    \tau_{n+1}^{2} - \tau_{n+1} \leq \tau_{n}^{2}.
  \end{equation}
\item\label{i:difference} 
  For every $ n \in \NPP$,
  since $\tau_{n}\leq \tau_{n+1}\leq \rbr{1+\sqrt{1+4\tau_{n}^{2}}}/2
  $
  by \cref{e:cond-tau},
  it follows from \cref{eq:rewrite-cont-tau}
  and \cref{eq:taun.geq1}
  that 
  \begin{equation}
    \label{eq:difference}
    \tau_{n+1} - \tau_{n}
    = \frac{\tau_{n+1}^{2}- \tau_{n}^{2}}{\tau_{n+1}+\tau_{n}}
    \leq \frac{\tau_{n+1}}{\tau_{n+1}+\tau_{n}}
    \leq \frac{1+\sqrt{1+4\tau_{n}^{2}}}{2\rbr{\tau_{n}+\tau_{n}} }
    \leq \frac{\tau_{n}+\sqrt{\tau_{n}^{2}+4\tau_{n}^{2}}}{4\tau_{n}}
    = \frac{1+\sqrt{5}}{4} < 0.81.
  \end{equation}
  \end{enumerate}
\end{remark}

\begin{lemma}
  \label{l:quotient}
The following hold: 
  \begin{enumerate}
\item
\label{i:quot.3}
$\varlimsup(\tau_n/n)\leq\tau_1/2$.
    \item \label{i:quot.1}
      Using the convention 
      that 
      $\tfrac{1}{\pinf}= 0$, we have 
      \begin{equation}
        \label{eq:liminf.limsup}
        \frac{1-1/\tau_{\infty}}{1+1/\tau_{\infty}} -
        \frac{1}{\tau_{\infty}\rbr{\tauinf+1} }
        \leq \varliminf \frac{\tau_{n}-1}{\tau_{n+1}}
        \leq \varlimsup \frac{\tau_{n}-1}{\tau_{n+1}}
        \leq 1 -\frac{1}{\tauinf}.
      \end{equation}
    \item\label{i:quot.lim} 
      Suppose that $\lim \tau_{n} =\pinf$.
  Then 
  \begin{equation}
    \label{eq:quotient.lim}
    \lim \frac{\tau_{n}-1}{\tau_{n+1}} = 1.
  \end{equation}
  \end{enumerate}
\end{lemma}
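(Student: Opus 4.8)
The plan is to establish \cref{i:quot.1} first, derive \cref{i:quot.lim} from it as the case $\tauinf=\pinf$, and then handle \cref{i:quot.3} by splitting on whether $\tauinf$ is finite. Throughout I will use only that $\tau_n\ge1$ and $\tau_n\uparrow\tauinf\in[1,\pinf]$ (from \cref{rm:cond-tau}), the elementary difference estimate $\tau_{n+1}-\tau_n\le\tau_{n+1}/(\tau_{n+1}+\tau_n)$ recorded in \cref{rm:cond-tau}\cref{i:difference}, and the defining recursion $\tau_{n+1}\le\bigl(1+\sqrt{1+4\tau_n^2}\bigr)/2$ from \cref{e:cond-tau}.

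For the upper bound in \cref{i:quot.1}: since $1\le\tau_n\le\tau_{n+1}$ we have $(\tau_n-1)/\tau_{n+1}\le 1-1/\tau_n$, and $1-1/\tau_n$ increases to $1-1/\tauinf$ (with the convention $1/\pinf=0$), so $\varlimsup(\tau_n-1)/\tau_{n+1}\le 1-1/\tauinf$. For the lower bound, I would use $\tau_n\ge1$ to pass from $\tau_{n+1}-\tau_n\le\tau_{n+1}/(\tau_{n+1}+\tau_n)$ to $\tau_n\ge\tau_{n+1}-\tau_{n+1}/(\tau_{n+1}+1)=\tau_{n+1}^2/(\tau_{n+1}+1)$, and hence $(\tau_n-1)/\tau_{n+1}\ge 1-1/\tau_{n+1}-1/(\tau_{n+1}+1)$. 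Because $t\mapsto 1-1/t-1/(t+1)$ is increasing on $[1,\pinf[$ and $\tau_{n+1}\uparrow\tauinf$, the right-hand side increases to $1-1/\tauinf-1/(\tauinf+1)$, which is precisely $\tfrac{1-1/\tauinf}{1+1/\tauinf}-\tfrac1{\tauinf(\tauinf+1)}$; taking $\varliminf$ completes \cref{i:quot.1}. Part \cref{i:quot.lim} is then immediate: when $\lim\tau_n=\tauinf=\pinf$, both outer bounds in \cref{i:quot.1} collapse to $1$ under the convention, forcing $\lim(\tau_n-1)/\tau_{n+1}=1$.

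For \cref{i:quot.3} I distinguish two cases. If $\tauinf<\pinf$, then $\tau_n\to\tauinf<\pinf$, so $\tau_n/n\to0\le\tau_1/2$. If $\tauinf=\pinf$, then $\tau_n\to\pinf$, and from the recursion $\tau_{n+1}-\tau_n\le\tfrac12+\varepsilon_n$ with $\varepsilon_n:=\tfrac12\bigl(\sqrt{1+4\tau_n^2}-2\tau_n\bigr)=\bigl(2(\sqrt{1+4\tau_n^2}+2\tau_n)\bigr)^{-1}$, and $\varepsilon_n\to0$ since $\tau_n\to\pinf$. Summing from $1$ to $n-1$ gives $\tau_n\le\tau_1+\tfrac{n-1}{2}+\sum_{k=1}^{n-1}\varepsilon_k$; dividing by $n$ and letting $n\to\pinf$ (the first term vanishes, the second tends to $\tfrac12$, and the Ces\`{a}ro average of the null sequence $(\varepsilon_k)$ vanishes) yields $\varlimsup\tau_n/n\le\tfrac12\le\tau_1/2$, the last step because $\tau_1\ge1$.

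The only genuinely delicate point is \cref{i:quot.3} in the case $\tauinf=\pinf$: the off-the-shelf bound $\tau_{n+1}-\tau_n\le(1+\sqrt5)/4$ from \cref{rm:cond-tau}\cref{i:difference} is too weak to reach the constant $\tau_1/2$, so one has to extract the asymptotically sharp slope $\tfrac12$ from the square root in the recursion and then convert that bound on consecutive differences into a bound on $\tau_n/n$ via a Ces\`{a}ro (equivalently, Stolz--Ces\`{a}ro) argument. All other parts are forced by the monotone convergence $\tau_n\uparrow\tauinf$ together with one-line algebra.
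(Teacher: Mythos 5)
Your proof is correct, and for item \cref{i:quot.3} it takes a genuinely different route from the paper. The paper establishes $\varlimsup(\tau_n/n)\leq\tau_1/2$ by induction, proving the explicit nonasymptotic bound $(\forall n\in\NPP)~\tau_n\leq\tau_1(n+\sqrt{n})/2$ (the key step being $\sqrt{1+(n+\sqrt{n})^2}<n+\sqrt{n+1}$); this needs no case distinction and yields an estimate valid for every $n$. You instead split on whether $\tauinf$ is finite and, in the unbounded case, telescope the sharpened increment bound $\tau_{n+1}-\tau_n\leq\tfrac{1}{2}+\varepsilon_n$ with $\varepsilon_n=\bigl(2(\sqrt{1+4\tau_n^2}+2\tau_n)\bigr)^{-1}\to 0$ and finish with a Ces\`{a}ro argument; this actually gives the stronger conclusion $\varlimsup(\tau_n/n)\leq\tfrac{1}{2}$ whenever $\tauinf=\pinf$ (sharp, by \cref{eg:seq}\cref{i:seq2}), at the price of losing any uniform-in-$n$ estimate. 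For \cref{i:quot.1} the upper bound is essentially the paper's (it uses $1-1/\tau_{n+1}$ where you use $1-1/\tau_n$); your lower bound routes through $\tau_n\geq\tau_{n+1}^2/(\tau_{n+1}+1)$, obtained from the increment estimate of \cref{rm:cond-tau}\cref{i:difference}, whereas the paper works directly from the identity $(\tau_n-1)/\tau_{n+1}=(\tau_n^2-1)/(\tau_{n+1}(\tau_n+1))$ together with \cref{eq:rewrite-cont-tau} --- both arguments ultimately rest on the same inequality $\tau_{n+1}^2-\tau_{n+1}\leq\tau_n^2$ and produce intermediate bounds with the same limit, which you correctly check equals the displayed expression. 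Item \cref{i:quot.lim} is obtained in both proofs identically, as the $\tauinf=\pinf$ specialization of \cref{i:quot.1}.
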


\begin{proof}
\cref{i:quot.3}:
We claim that $\rbr{\forall n\in\NPP}$
$\tau_n\leq\tau_1(n+\sqrt{n})/2$.
The inequality is clear when $n=1$.
Assume that, for some integer $n\geq 1$,
we have $\tau_n\leq\tau_1(n+\sqrt{n})/2$.
Then, on the one hand,
we derive from \cref{e:cond-tau} that
\begin{equation}
\tau_{n+1}
\leq\frac{1+\sqrt{1+4\tau_n^2}}{2}
\leq\frac{\tau_1+\sqrt{\tau_1^2+\tau_1^2(n+\sqrt{n})^2}}{2}
=\frac{\tau_1\big(1+\sqrt{1+(n+\sqrt{n})^2}\big)}{2}.
\end{equation}
On the other hand,
since $(n+\sqrt{n+1})^2-(1+(n+\sqrt{n})^2)
=2n(\sqrt{n+1}-\sqrt{n})>0$,
we obtain $\sqrt{1+(n+\sqrt{n})^2}<n+\sqrt{n+1}$.
Altogether,
$\tau_{n+1}<\tau_1(n+1+\sqrt{n+1})/2$,
which concludes the induction argument.
Consequently, $\varlimsup(\tau_n/n)
\leq\lim\tau_1(n+\sqrt{n})/(2n)=\tau_1/2$.

  \cref{i:quot.1}: 
  First, since
  \begin{math}
    \rbr{\forall n \in \NPP} ~
    \rbr{\tau_{n}-1} /\tau_{n+1}
    \leq  \rbr{\tau_{n+1}-1} /\tau_{n+1}
    = 1-1/\tau_{n+1}
  \end{math}
  by \cref{e:cond-tau},
  we infer from
  \cref{eq:lim.tau} that 
  \begin{math}
    \varlimsup \rbr{\tau_{n}-1}/\tau_{n+1}
    \leq 1- 1/\tauinf.
  \end{math}
  Next, by \cref{eq:rewrite-cont-tau}
  and \cref{e:cond-tau}, we have 
  \begin{subequations}
    \begin{align}
      \rbr{\forall n \in \NPP} \quad
      \frac{\tau_{n}-1}{\tau_{n+1}} 
      =  \frac{\tau_{n}^{2}-1}{\tau_{n+1}\rbr{\tau_{n}+1} }
      & \geq \frac{\tau_{n+1}^{2}-\tau_{n+1}-1}{\tau_{n+1}\rbr{\tau_{n}+1}
      } \\
      & = \frac{\tau_{n+1}-1}{\tau_{n}+1} - 
      \frac{1}{\tau_{n+1}\rbr{\tau_{n}+1} } \\
      & \geq \frac{\tau_{n}-1}{\tau_{n}+1}
      -\frac{1}{\tau_{n+1}\rbr{\tau_{n}+1} } \\
      & = \frac{1-1/\tau_{n}}{1+1/\tau_{n}} 
      -\frac{1}{\tau_{n+1}\rbr{\tau_{n}+1} },
    \end{align} 
    and hence, we get from \cref{eq:lim.tau} that 
    \begin{math}
      \varliminf \rbr{\tau_{n}-1} /\tau_{n+1}
      \geq  \rbr{1-1/\tauinf}/\rbr{1+1/\tauinf} 
      - 1/\rbr{\tauinf\rbr{\tauinf+1} },
    \end{math}
    as desired.
  \end{subequations}
  
  \cref{i:quot.lim}: Follows from 
  \cref{i:quot.1} and \cref{eq:lim.tau}.
\end{proof}

\begin{example}\label{eg:seq}
  The condition
  \begin{equation}
    \label{e:181017a}
\sup_{n\in\NPP} \big(n/\tau_n\big) <+\infty
  \end{equation}
  and the quotient
  \begin{equation}
  \frac{\tau_n-1}{\tau_{n+1}}
  \end{equation}
  play significant roles in subsequent convergence results.
  Here are the two popular examples of sequences 
	that satisfy \cref{assump:2}
    as well as \eqref{e:181017a} already seen in 
    \cref{sec:intro}: 
	\begin{enumerate}
		\item\label{i:seq2} 
          \cite{BeckTeboulle-MFISTA,BeckTeboulle-FISTA,Chambolle-Pock-2016,Nesterov-1983}
		Set $\tau_{1} \coloneqq 1$, and set 
		\begin{math}
		\rb{\forall n \in \NPP}~
		\tau_{n+1} \coloneqq \rb[\big]{1+\sqrt{1+4\tau_{n}^{2}} }/2.
		\end{math}
		Then, it is straightforward to verify that 
		\begin{math}
		\rb{\forall n \in \NPP }~
		\tau_{n}^{2} - \tau_{n+1}^{2} + \tau_{n+1} =0
		\end{math}
		and that $\fa{\tau_{n}}{n \in \NPP}$
		is an
        increasing sequence in
        $\left[1,\pinf \right[$.
		Moreover, an inductive argument shows that 
		\begin{math}
		\rb{\forall n \in \NPP}~
		\tau_{n} \geq \rb{n+1}/2,
		\end{math}
		from which we obtain $\tau_\infty=+\infty$ and 
        $\sup_{n \in \NPP}\rbr{n/\tau_{n}} \leq 2$.
This and \cref{l:quotient}\cref{i:quot.3}
guarantee that $\lim(\tau_n/n)=1/2$.
Furthermore, it is part of the folklore that
\begin{equation}
\label{e:folklore}
\frac{\tau_n-1}{\tau_{n+1}}
=1-\frac{3}{n}
+\smallO{\bigg(\frac{1}{n}\bigg)};
\end{equation}
for completeness, 
a proof is provided in \cref{app:folklore}.

		\item\label{i:seq1} 
              \cite{Attouch-fast-MPB-2018,Attouch-rate-2016,Chambolle-Dossal-15,Su-Boyd-Candes-2016}
        Let $\rho \in \left[2,\pinf\right[$,
		and define 
		\begin{math}
		\rb{\forall n \in \NPP}
		~
		\tau_{n} \coloneqq 
		\rb{n+\rho-1}/\rho.
		\end{math}
		Then, 
		clearly $\fa{\tau_{n}}{n \in \NPP}$
		is 
        an increasing 
        sequence in $\left[1,\pinf \right[$
        with $\tau_\infty=+\infty$
        and, 
		for every $n \in \NPP$,
		we have $n / \tau_{n} = n\rho /\rb{n+\rho-1}
\leq \rho $,
		\begin{equation}
		\tau_{n}^{2} - \tau_{n+1}^{2} + \tau_{n+1}
		= \rb*{\frac{n+\rho-1}{\rho}}^{2}
		-	\rb*{\frac{n+\rho}{\rho}}^{2}
		+ \frac{n+\rho}{\rho}
		= \frac{\rb{\rho-2}n  +\rb{\rho-1}^{2} }{\rho^2} 
        \geq \frac{1}{4},
		\end{equation}
and
\begin{equation}
\frac{\tau_n-1}{\tau_{n+1}}
=\frac{n-1}{n+\rho}
=1-\frac{1+\rho}{n}+\bigO{\bigg(\frac{1}{n^2}\bigg)}.
\end{equation}
    \end{enumerate}
\end{example}

We now turn to examples of the condition
\begin{equation}
\label{eq:Attouch.cond.early}
\rbr{\exists \delta \in \left]0,1 \right[}\rbr{\forall n \in \NPP} \quad 
\tau_{n+1}^{2}-\tau_{n}^{2} \leq \delta \tau_{n+1},
\end{equation}
which is of some interest in \cref{fista} (see \cref{eq:Attouch.cond})
and \cref{mfista}. 
Further examples of
  sequences that satisfy \cref{eq:Attouch.cond.early}
  can be found in \cite[Section~5]{Attouch-Cabot-HAL2017}.

\begin{example}
  \label{eg:Attouch-cond}
Let $\rho \in \bigl]1, {+}\infty \bigr[ $
  and
  set 
  \begin{equation}
    \label{eq:mu-n}
    \rbr{\forall n\in \NN^{\ast}} \quad 
    \mu_{n} \coloneqq
    \frac{\tau_{n} + \rho -1}{\rho}.
  \end{equation}
  Then
  \begin{equation}
    \label{eq:ineq-mu}
    \rbr{\forall n \in \NN^{\ast}}
    \quad 
    \mu_{n+1}^{2} - 
    \mu_{n}^{2}
    \leq \frac{1+\sqrt{5}}{2\rho} \mu_{n+1}.
  \end{equation}
If $\rho > \rbr{1+\sqrt{5}}/2$, 
then the sequence $\fa{\mu_{n}}{n \in \NN^{\ast}}$
  satisfies \cref{eq:Attouch.cond.early}
with $\delta = \rbr{1+\sqrt{5}}/\rbr{2\rho} \in \left]0,1
\right[$.
\end{example}
\begin{proof}
  Indeed, since $\rbr{1+\sqrt{5}}/2 >1$,
  we derive from 
  \cref{eq:mu-n}, \cref{eq:rewrite-cont-tau},
  and \cref{eq:difference}
  that 
  \begin{subequations}
    \begin{align}
      \rbr{\forall n \in \NN^{\ast}}
      \quad \mu_{n+1}^{2} - \mu_{n}^{2} 
      &= \frac{\tau_{n+1}^{2} - \tau_{n}^{2} + 2\rbr{\rho-1}\rbr{\tau_{n+1}-\tau_{n}}}{\rho^{2}} 
      \leq \frac{\tau_{n+1} + \tfrac{1+\sqrt{5}}{2}\rbr{\rho-1} }{\rho^{2}} 
      \\
      &<
      \frac{\tfrac{1+\sqrt{5}}{2}\tau_{n+1} + \tfrac{1+\sqrt{5}}{2}\rbr{\rho-1} }{\rho^{2}} 
      =\frac{1+\sqrt{5}}{2\rho}\mu_{n+1},
    \end{align} 
  \end{subequations}
  as claimed. The remaining implication follows readily. 
\end{proof}

\begin{example} \cite{Aujol-Dossal-2015}
  \label{eg:Aujol-seq}
Let $\rbr{a,d} \in \RPP \times \RP $, set
    \begin{equation}
      \label{eq:Aujol-seq}
      \rbr{\forall n \in \NN^{\ast}} \quad 
      \tau_{n} \coloneqq \rbr[\Big]{\frac{n+a-1}{a}}^{d},
    \end{equation}
and suppose that one of the following holds:
\begin{enumerate}
\item $d=0$.
\item \label{i:aujopaujo} $d \in \left]0,1\right]$
and $a>\max\bigl\{ 1, \rbr{2d}^{1/d}  \bigr\}$.
\end{enumerate}
Aujol and Dossal's \cite[Lemma~3.2]{Aujol-Dossal-2015} yields
\begin{equation}
  \label{eq:Aujol-ineq}
\rbr{\forall n \in \NN^{\ast}} \quad 
\frac{1}{a^{d}} - \frac{2d}{a^{2d}} > 0
\quad \text{and} \quad 
\tau_{n}^{2} - \tau_{n+1}^{2} + \tau_{n+1}
\geq \rbr[\Big]{\frac{1}{a^{d}} - \frac{2d}{a^{2d}}}\rbr{n+a}^{d}
> 0.
\end{equation}
Let us add to their analysis by pointing out that 
if \cref{i:aujopaujo} holds, then \cref{eq:Attouch.cond.early} holds
with $\delta = {\rbr{2d}/a^{d}} \in \left]0,1\right[$.
Indeed, \cref{eq:Aujol-seq} and \cref{eq:Aujol-ineq} assert
    that
      \begin{equation}
      \rbr{\forall n \in \NPP} \quad 
      \tau_{n+1}^{2} - \tau_{n}^{2}
      \leq \tau_{n+1} - \rbr[\Big]{\frac{1}{a^{d}}- \frac{2d}{a^{2d}}}\rbr{n+a}^{d}
      = \tau_{n+1} - 
      \rbr[\Big]{\frac{1}{a^{d}}- \frac{2d}{a^{2d}}}a^{d}\tau_{n+1}
  = \delta\tau_{n+1}.
\end{equation}
Also, note that if $d\in\left]0,1\right[$,
then $\sup_{n \in \NN^{\ast}} \rbr{n/\tau_{n}} = \pinf$ 
(by L'H\^{o}pital's rule)
in contrast to \cref{eg:seq}.
\end{example}

\section{FISTA}
\label{fista}

In this section, we present three main results on FISTA. 
We again recall that \cref{assump:1} is in force and 
(see \eqref{eq:T.and.hINTRO}) that 
\begin{equation}
       \label{eq:T.and.h.fista}
	h = f + g \quad 
	\text{and}
	\quad 
      T = \prox{\gamma g}\circ\mathop{\rb[\big]{\Id - \mathop{\gamma {\grad{f}}}}}.
\end{equation}

\begin{algorithm}[FISTA]
  \label{alg:FISTA}
  Let $x_{0} \in \HH$, 
  set $y_{1}\coloneqq x_{0}$,
  and update 
	\begin{align}\label{e:FISTA-algo}
	& \text{for~} n=1,2,\ldots \notag \\
	& \left\lfloor
	\begin{array}{ll}
	x_{n} & \coloneqq 
	Ty_{n}, \smallskip  \\ 
	y_{n+1} & \displaystyle  
	\coloneqq 
	x_{n} + \frac{\tau_{n}-1}{\tau_{n+1}}
    \rb{x_{n} - x_{n-1}},
	\end{array}
	\right.
	\end{align}
  where $T$ is as in \cref{eq:T.and.h.fista} and $\fa{\tau_{n}}{n \in \NPP}$ satisfies
  \cref{e:cond-tau}. 
\end{algorithm}

We assume for the remainder of this section that
\begin{empheq}[box=\mybluebox]{equation}
\fa{x_{n}}{n \in \NPP} 
\text{~is a sequence generated by \cref{alg:FISTA}.}
\end{empheq}
We also set
\begin{empheq}[box=\mybluebox]{equation}
\label{e:defn-en}
  \rbr{\forall n \in \NPP} \quad
  \sigma_{n}\coloneqq h\rbr{x_{n}} + \frac{1}{2\gamma}\norm{x_{n}-x_{n-1}}^{2}
  \quad
  \text{and}
  \quad
  \alpha_{n}\coloneqq \frac{\tau_{n}-1}{\tau_{n+1}}.
\end{empheq}
Note that, by \cref{eq:taun.geq1}
and \cref{e:cond-tau},
\begin{equation}
  \label{eq:quotient.01}
  \rbr{\forall n \in \NPP} \quad
  0\leq \alpha_{n} 
  =\frac{\tau_{n}-1}{\tau_{n+1}}
  \leq \frac{\tau_{n}-1}{\tau_{n}}
  < 1.
\end{equation}

The first two items of the following result are due to Attouch and Cabot; see 
\cite[Proposition~3]{Attouch-Cabot-HAL2017}.

\begin{lemma}
  \label{l:decreasing}
  The following holds: 
  \begin{enumerate}
    \item\label{i:decrease.i} 
      \begin{math}
        \rbr{\forall n\in \NPP} ~
        \rbr{2\gamma}^{-1}\rbr{1-\alpha_{n}^{2}} \norm{x_{n}-x_{n-1}}^{2}
        \leq \sigma_{n}-\sigma_{n+1}.
      \end{math}
    \item\label{i:decrease.ii}
  The sequence $\fa{\sigma_{n}}{n \in \NPP} $
  is decreasing and convergent to
  a point in $\left[\minf,\pinf \right[$.
    \item\label{i:sum.decrease}
      Suppose that $\inf_{n \in \NPP} \sigma_{n} > \minf$.
      Then the following hold: 
      \begin{enumerate}
        \item \label{i:sum.2a}
      $\sum_{n \in \NPP}\rbr{1-\alpha_{n}^{2}} \norm{x_{n}-x_{n-1}}^{2} < 
      \pinf$.
    \item\label{i:sum.2b} Suppose that $\sup_{n \in \NPP}\tau_{n} < \pinf$.
      Then 
      $\inf_{n \in \NPP}\rbr{1-\alpha_{n}^{2}} > 0$
      and 
      $\sum_{n \in \NPP} \norm{x_{n}-x_{n-1}}^{2} < \pinf$.
      \end{enumerate}
  \end{enumerate}
\end{lemma}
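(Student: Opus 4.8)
The plan is to use the one-FISTA-step estimate in Lemma \ref{l:1-fista-step}\cref{i:1step-1}, specialized to the data produced by \cref{alg:FISTA}, and then to read off the three items in sequence. For the identification, fix $n\in\NPP$ and apply Lemma \ref{l:1-fista-step} with $y\coloneqq y_n$, $x_-\coloneqq x_{n-1}$, $\tau\coloneqq\tau_n$, and $\tau_+\coloneqq\tau_{n+1}$; then $x=Ty_n=x_n$, $y_+=x_n+\frac{\tau_n-1}{\tau_{n+1}}(x_n-x_{n-1})=y_{n+1}$, and $x_+=Ty_{n+1}=x_{n+1}$, matching the algorithm. Since $\tau_{n+1}\geq 1$ (see \cref{eq:taun.geq1}), the hypotheses of Lemma \ref{l:1-fista-step} are met, so item \cref{i:1step-1} gives
\begin{equation*}
h(x_{n+1})+\frac{1}{2\gamma}\norm{x_{n+1}-x_n}^2
\leq h(x_n)+\frac{(\tau_n-1)^2}{2\gamma\,\tau_{n+1}^2}\norm{x_n-x_{n-1}}^2 .
\end{equation*}
Recalling the definitions in \cref{e:defn-en}, the left side is $\sigma_{n+1}$ and $h(x_n)=\sigma_n-\frac{1}{2\gamma}\norm{x_n-x_{n-1}}^2$, while $(\tau_n-1)^2/\tau_{n+1}^2=\alpha_n^2$; rearranging yields precisely \cref{i:decrease.i}.

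For \cref{i:decrease.ii}, note from \cref{eq:quotient.01} that $0\leq\alpha_n<1$, hence $1-\alpha_n^2>0$, so the right side of \cref{i:decrease.i} is nonnegative; thus $\sigma_{n+1}\leq\sigma_n$ for all $n$, i.e. $\fa{\sigma_n}{n\in\NPP}$ is decreasing. A decreasing real sequence converges in $\RRX$, and since $\sigma_2\leq\sigma_1<\pinf$ the limit lies in $\left[\minf,\pinf\right[$. For \cref{i:sum.decrease}, assume $\inf_{n\in\NPP}\sigma_n>\minf$. Summing \cref{i:decrease.i} over $n=1,\dots,N$ telescopes to
\begin{equation*}
\frac{1}{2\gamma}\sum_{n=1}^{N}\rbr{1-\alpha_n^2}\norm{x_n-x_{n-1}}^2
\leq \sigma_1-\sigma_{N+1}\leq \sigma_1-\inf_{n\in\NPP}\sigma_n<\pinf,
\end{equation*}
and letting $N\to\pinf$ gives \cref{i:sum.2a}. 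For \cref{i:sum.2b}, assume in addition $\tauinf=\sup_{n\in\NPP}\tau_n<\pinf$. Then by \cref{eq:quotient.01}, $\alpha_n\leq(\tau_n-1)/\tau_n=1-1/\tau_n\leq 1-1/\tauinf<1$ for all $n$, so $1-\alpha_n^2\geq 1-(1-1/\tauinf)^2=:c>0$ uniformly in $n$; hence $\inf_{n\in\NPP}(1-\alpha_n^2)\geq c>0$, and combining this lower bound with \cref{i:sum.2a} gives $c\sum_{n\in\NPP}\norm{x_n-x_{n-1}}^2\leq\sum_{n\in\NPP}(1-\alpha_n^2)\norm{x_n-x_{n-1}}^2<\pinf$, as required.

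I do not anticipate a genuine obstacle here: the whole argument is a bookkeeping reduction to Lemma \ref{l:1-fista-step}\cref{i:1step-1} plus a telescoping sum, with the only points requiring a moment's care being (a) verifying that the substitution into Lemma \ref{l:1-fista-step} reproduces exactly the FISTA recursion \cref{e:FISTA-algo}, and (b) the uniform lower bound $1-\alpha_n^2\geq c>0$ under the boundedness hypothesis, which uses $\tau_n\leq\tauinf<\pinf$ in \cref{eq:quotient.01}. The only subtlety worth flagging is that one must keep track of the fact that $h(x_n)$ and $\sigma_n$ may a priori equal $\minf$ is not an issue, since $x_n\in\ran T\subseteq\dom h$ by \cref{e:important-note}, so $h(x_n)\in\RR$ and each $\sigma_n\in\RR$ is well defined; the telescoping in \cref{i:sum.decrease} is then legitimate precisely because of the standing assumption $\inf_n\sigma_n>\minf$.
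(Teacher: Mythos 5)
Your proposal is correct and follows essentially the same route as the paper: item \cref{i:decrease.i} via \cref{l:1-fista-step}\cref{i:1step-1} with $(y,x_-,\tau,\tau_+)=(y_n,x_{n-1},\tau_n,\tau_{n+1})$, item \cref{i:decrease.ii} from \cref{eq:quotient.01}, item \cref{i:sum.2a} by telescoping against $\inf_n\sigma_n>\minf$, and item \cref{i:sum.2b} from the uniform bound $1-\alpha_n^2\geq 1-\rbr{(\tauinf-1)/\tauinf}^2>0$. The added remark that $x_n\in\ran T\subseteq\dom h$ keeps every $\sigma_n$ real is a sensible extra check, not a deviation.
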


\begin{proof}
  
\cref{i:decrease.i}: For every $n \in \NPP$,
\cref{l:1-fista-step}\cref{i:1step-1}
(applied to $\rb{y,x_{-},\tau,\tau_{+}} 
=   \rb{y_{n} ,x_{n-1},\tau_{n},\tau_{n+1} } $)
asserts that 
\begin{math}
\sigma_{n+1} 
\leq h\rb{x_{n}}  + \alpha_{n}^{2}\rb{2\gamma}^{-1}\norm{x_{n}-x_{n-1}}^{2} 
= \sigma_{n} - \rb{1-\alpha^{2}_{n} }\rb{2\gamma}^{-1}\norm{x_{n}-x_{n-1}}^{2},
\end{math} 
from which the desired inequality
follows.
\cref{i:decrease.ii}: A consequence of \cref{i:decrease.i}
and \cref{eq:quotient.01}.

\cref{i:sum.2a}: 
By \cref{i:decrease.i} and \cref{e:defn-en}, 
\begin{equation}
\rb{\forall n \in \NPP}
\quad 
\sum_{k=1}^{n}\frac{1-\alpha_{k}^{2}}{2\gamma}\norm{x_{k}-x_{k-1}}^{2}
\leq \sum_{k=1}^{n}\rb{\sigma_{k} - \sigma_{k+1} }
= \sigma_{1} -\sigma_{n+1} 
\leq \sigma_{1} - \inf_{k \in \NPP}\sigma_{k} < \pinf.
\end{equation}
Thus, $\sum_{n \in \NPP}\rb{1-\alpha_{n}^{2}}\norm{x_{n}-x_{n-1}}^{2} < \pinf$,
as claimed.

\cref{i:sum.2b}: 
  Because the function 
  $\RPP\to\RR\colon \xi \mapsto \rbr{\xi -1}/\xi $
  is increasing and 
  $\rbr{\forall n \in \NPP} ~
  0<\tau_{n}\leq \tau_{n+1} \leq \tauinf$,
  we see that  
  $\rbr{\forall n\in \NPP} ~ \alpha_{n}= 
  \rbr{\tau_{n}-1}/ \tau_{n+1} \leq \rbr{\tau_{n}-1}/\tau_{n} 
  \leq \rbr{\tauinf -1}/\tauinf \in \left[0,1 \right[ $;
      therefore,
      \begin{equation}
        \label{eq:summable.bound}
        \rbr{\forall n \in \NPP} \quad
        1-\alpha_{n}^{2} 
        \geq 1 - \rbr[\bigg]{\frac{\tauinf-1}{\tauinf}}^{\tsp 2} > 0.
      \end{equation}
  Combining \cref{eq:summable.bound}
  and \cref{i:sum.2a} yields the conclusion.
\end{proof}

We are ready for our first main result which
establishes a 
minimizing property of
the sequence $\fa{x_{n}}{n \in \NPP} $
generated by \cref{alg:FISTA}
in the general setting.

\begin{theorem}
  \label{t:general.case}
  The following holds:  
  \begin{equation}
    \label{eq:general.case.liminf}
    \rbr{\forall m \in \NPP} \quad
    \inf_{n\geq m }h\rbr{x_{n}} 
    =
    \lim_{n} \min_{1\leq k\leq n} 
    h\rbr{x_{k}}  
    = \varliminf_{n} h\rbr{x_{n}} 
    = \inf h.
  \end{equation}
\end{theorem}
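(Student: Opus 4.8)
The plan is to reduce the whole chain of identities in \cref{eq:general.case.liminf} to the single inequality $\varliminf_n h(x_n) \leq \inf h$. Since $x_n \in \ran T \subseteq \dom h$ we have $h(x_n) \in \RR$ and $h(x_n) \geq \inf h$ for every $n$, so for each $m \in \NPP$ one trivially has $\inf h \leq \inf_{n\geq 1} h(x_n) \leq \inf_{n\geq m} h(x_n) \leq \varliminf_n h(x_n)$, while $(\min_{1\leq k\leq n} h(x_k))_{n\in\NPP}$ is nonincreasing with limit $\inf_{n\geq 1} h(x_n)$. Hence, once $\varliminf_n h(x_n) \leq \inf h$ is established, every quantity in \cref{eq:general.case.liminf} equals $\inf h$.

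To prove $\varliminf_n h(x_n) \leq \inf h$, I argue by contradiction. Note first that $h(x_n) \leq \sigma_n$ and, by \cref{l:decreasing}\cref{i:decrease.ii}, $\sigma_n \leq \sigma_1 \in \RR$, so $\varliminf_n h(x_n) < \pinf$; if $\varliminf_n h(x_n) > \inf h$, then I may fix $z \in \dom h$ with $h(z) < \varliminf_n h(x_n)$, set $\mu_n \coloneqq h(x_n) - h(z)$ and $u_n \coloneqq \tau_n x_n - (\tau_n - 1)x_{n-1} - z$, and pick $N \in \NPP$ and $\eta \in \RPP$ with $\mu_n > \eta$ for all $n \geq N$. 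Then $(h(x_n))_n$ is bounded below, so $\sigma_n \geq h(x_n)$ forces $\inf_n \sigma_n > \minf$. Applying \cref{l:1-fista-step}\cref{i:1step-2} with $(y, x_-, \tau, \tau_+) = (y_n, x_{n-1}, \tau_n, \tau_{n+1})$ and this $z$ — so that $x = x_n$, $y_+ = y_{n+1}$, $x_+ = x_{n+1}$ in the notation there — gives $\tau_{n+1}^2 \mu_{n+1} + (2\gamma)^{-1}\norm{u_{n+1}}^2 \leq \tau_{n+1}(\tau_{n+1}-1)\mu_n + (2\gamma)^{-1}\norm{u_n}^2$; combining this with \cref{eq:rewrite-cont-tau} (i.e.\ $\tau_{n+1}(\tau_{n+1}-1) \leq \tau_n^2$) and with $\mu_n > 0$ for $n \geq N$ shows that $t_n \coloneqq \tau_n^2\mu_n + (2\gamma)^{-1}\norm{u_n}^2$ is nonincreasing for $n \geq N$. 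The key consequence is that $\tau_n^2 \eta < \tau_n^2 \mu_n \leq t_n \leq t_N < \pinf$ for $n \geq N$, so $(\tau_n)_n$ is \emph{bounded}, and likewise $(2\gamma)^{-1}\norm{u_n}^2 \leq t_N$, so $(u_n)_n$ is bounded.

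Now $\inf_n\sigma_n > \minf$ together with $\sup_n \tau_n < \pinf$ lets me invoke \cref{l:decreasing}\cref{i:sum.2b} to get $\sum_n \norm{x_n - x_{n-1}}^2 < \pinf$, hence $\norm{x_n - x_{n-1}} \to 0$. Since $x_n = u_n + z - (\tau_n - 1)(x_n - x_{n-1})$ with $(u_n)_n$ and $(\tau_n)_n$ bounded and $\norm{x_n - x_{n-1}} \to 0$, the sequence $(x_n)_n$ is bounded; consequently $(y_{n+1})_{n\in\NPP} = (x_n + \alpha_n(x_n - x_{n-1}))_{n\in\NPP}$ is bounded, while $y_{n+1} - x_{n+1} = (x_n - x_{n+1}) + \alpha_n(x_n - x_{n-1}) \to 0$ because both summands tend to $0$ (recall $0 \leq \alpha_n < 1$ by \cref{eq:quotient.01}). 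Finally, applying the Beck--Teboulle inequality \cref{e:key-ineq} to $(z, y_{n+1})$ (with $Ty_{n+1} = x_{n+1}$) and dropping a nonpositive term yields, via Cauchy--Schwarz, $\mu_{n+1} = h(x_{n+1}) - h(z) \leq \gamma^{-1}\norm{y_{n+1} - x_{n+1}}\,\norm{z - y_{n+1}}$, whose right-hand side tends to $0$. Thus $\varlimsup_n \mu_n \leq 0$, contradicting $\varliminf_n \mu_n = \varliminf_n h(x_n) - h(z) > 0$. This contradiction proves $\varliminf_n h(x_n) \leq \inf h$, and together with the first paragraph the theorem follows.

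The step I expect to be the crux is the construction of the monotone ``Lyapunov'' quantity $t_n$ and the extraction from it of the boundedness of $(\tau_n)_n$, $(u_n)_n$, and hence $(x_n)_n$; once the iterates and the auxiliary points $y_n$ are known to be bounded, the closing argument through \cref{e:key-ineq} is routine. Note that no hypothesis on whether $\tauinf$ is finite is needed in advance: the contradiction hypothesis itself forces $\sup_n \tau_n < \pinf$, which is precisely what activates \cref{l:decreasing}\cref{i:sum.2b}.
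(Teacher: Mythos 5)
Your proof is correct. It shares the paper's skeleton -- argue by contradiction with a point $z\in\dom h$ satisfying $h(z)<\varliminf_n h(x_n)$, and feed \cref{l:1-fista-step}\cref{i:1step-2} together with \cref{eq:rewrite-cont-tau} into the monotone quantity $t_n=\tau_n^2\mu_n+(2\gamma)^{-1}\norm{u_n}^2$ -- but it diverges after that point. The paper splits into the cases $\tauinf=\pinf$ and $\tauinf<\pinf$: in the first it reads off $h(x_n)\leq h(z)+\bigO{(1/\tau_n^2)}$ directly from $t_n\leq t_N$, and in the second it runs the summability machinery of \cref{l:summable-limit} and \cref{l:summable-liminf} on the telescoped term $(\tau_n^2-\tau_{n+1}^2+\tau_{n+1})\mu_n$ to obtain $\varliminf\mu_n=0$. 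You instead observe that the contradiction hypothesis $\mu_n>\eta>0$ together with $\tau_n^2\mu_n\leq t_N$ forces $(\tau_n)_{n\in\NPP}$ to be bounded, which collapses the case distinction entirely; you then combine boundedness of $(u_n)_{n\in\NPP}$ with \cref{l:decreasing}\cref{i:sum.2b} to get boundedness of $(x_n)_{n\in\NPP}$ and $(y_n)_{n\in\NPP}$ together with $y_{n+1}-x_{n+1}\to 0$, and you close with a Cauchy--Schwarz estimate on \cref{e:key-ineq} applied to $(z,y_{n+1})$, which even yields $\varlimsup\mu_n\leq 0$, stronger than the $\varliminf\mu_n=0$ that suffices. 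The paper's route is leaner in machinery (it never needs boundedness of the iterates, and its first case doubles as the rate estimate reused in \cref{t:main-fista}); your route is more unified, and the observation that the contradiction hypothesis itself rules out $\tauinf=\pinf$ is a genuinely nice economy. All the individual steps check out, including the preliminary reduction of the full chain of equalities in \cref{eq:general.case.liminf} to the single inequality $\varliminf_n h(x_n)\leq\inf h$.
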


\begin{proof}
  Let us first establish that 
  \begin{equation}
    \label{eq:prop.establish}
  \rbr{\forall m \in \NPP} \quad 
  \inf_{n \geq m}h\rbr{x_{n}} = \inf h.
  \end{equation}
  To do so, we proceed by contradiction: 
  assume that there exists $N\in \NPP$
  such that $\inf_{n \geq N} h\rbr{x_{n}} > \inf h$.
  Then, there exists $z \in \dom h$
  satisfying
  \begin{equation}
    \label{eq:z-existence}
    \minf <  h\rbr{z} < \inf_{n \geq N} h\rbr{x_{n}} .
  \end{equation}
  In turn,
  set 
  $\rbr{\forall n \in \NPP} ~
  \mu_{n} \coloneqq h\rbr{x_{n}} - h\rbr{z} 
  \text{~and~}
  u_{n} \coloneqq \tau_{n}x_{n} - \rbr{\tau_{n}-1} x_{n-1}-z$.
  For every $n \geq N$, 
  in the light of  
  \cref{l:1-fista-step}\cref{i:1step-2}
  (applied to $\rbr{y,x_{-},\tau,\tau_{+}}  = 
  \rbr{y_{n},x_{n-1},\tau_{n},\tau_{n+1}} $),
  we get 
  \begin{subequations}
  \label{eq:induction.general}
    \begin{align}
      \tau_{n+1}^{2}\mu_{n+1}
    +\rbr{2\gamma}^{-1}\norm{u_{n+1}}^{2}
    & \leq \tau_{n+1}\rbr{\tau_{n+1}-1}\mu_{n}
       + \rbr{2\gamma}^{-1} \norm{u_{n}}^{2} \\
       & = \tau_{n}^{2}\mu_{n} 
           + \rbr{2\gamma}^{-1} \norm{u_{n}}^{2} 
           - \rbr[\big]{\tau_{n}^{2}- \tau_{n+1}^{2} + \tau_{n+1}} \mu_{n}.
    \end{align}
  \end{subequations}
  Furthermore, due to \cref{eq:z-existence},
  \begin{equation}
    \label{eq:positive}
    \rbr{\forall n \geq N} \quad
    \mu_{n} = h\rbr{x_{n}}  - h\rbr{z} > 0.
  \end{equation}
  Let us consider the following
  two possible cases.
  
  (a) $\tau_{\infty} = \pinf$:
  By \cref{eq:lim.tau}, 
  $\tau_{n}\to \pinf$.
  Next, we derive from \cref{eq:induction.general}, 
  \cref{eq:rewrite-cont-tau},
  and \cref{eq:positive} that
  \begin{math}
    \rbr{\forall n \geq N} ~
    \tau_{n}^{2}\mu_{n} 
    \leq \tau_{N}^{2} \mu_{N}  
    +\rbr{2\gamma}^{-1}\norm{u_{N}}^{2}
  \end{math}
  or, equivalently,
  by the very definition of $\fa{\mu_{n}}{n \in \NPP} $,
  \begin{equation}
    \label{eq:limit.i}
    \rbr{\forall n\geq N} 
    \quad
    h\rbr{x_{n}} 
    \leq h\rbr{z} + \frac{\tau_{N}^{2}}{\tau_{n}^{2}}\mu_{N}
    + \frac{1}{2\gamma \tau_{n}^{2}} \norm{u_{N}}^{2}.
  \end{equation}
  Consequently, since $\tau_{n} \uparrow
  \pinf$,
  taking the
  limit superior in \cref{eq:limit.i}
  gives
  \begin{math}
    \inf_{n\geq N} h\rbr{x_{n}} 
    \leq \varlimsup h\rbr{x_{n}} 
    \leq h\rbr{z} ,
  \end{math}
  which contradicts \cref{eq:z-existence}. 

  (b) $\tau_{\infty} < \pinf$:  
  Set $\rbr{\forall n\geq N} ~ 
  \xi_{n}\coloneqq \tau_{n}^{2}\mu_{n}
  + \rbr{2\gamma}^{-1}\norm{u_{n}}^{2}
  \text{ and } \eta_{n}\coloneqq
  \rbr{\tau_{n}^{2}-\tau_{n+1}^{2} + \tau_{n+1}}\mu_{n}$.
  Then, 
  by \cref{eq:positive}, 
  $\set{\xi_{n}}{n\geq N} \subset \RPP$
  and, by \cref{eq:z-existence}\&\cref{eq:rewrite-cont-tau},
  $\set{\eta_{n}}{n\geq N} \subset \RP$.
  In turn, 
  on the one hand,
  combining \cref{eq:induction.general}
  and \cref{l:summable-limit}\cref{i:summable}, 
  we infer that 
  \begin{math}
    \sum_{n\geq N}\rbr{\tau_{n}^{2} -\tau_{n+1}^{2}+\tau_{n+1}} \mu_{n}
    = \sum_{n\geq N}\eta_{n} < \pinf.
  \end{math}
  On the other hand, 
  because $\rbr{\forall n \in \NPP} ~ \tau_{n}^{2} \leq \rbr{\sup_{k \in \NPP}\tau_{k}}^{2}<\pinf$
  and $\set{\tau_{n}}{n \in \NPP} \subset \left[1,\pinf \right[$
      by our assumption and \cref{eq:taun.geq1},
  \begin{equation}
    \rbr{\forall p\in \NPP} 
    \quad 
    \sum_{n=N}^{N+p}\rbr[\big]{\tau_{n}^{2}-\tau_{n+1}^{2} + \tau_{n+1}} 
    = \tau_{N}^{2} - \tau_{N+p+1}^{2} + \sum_{n=N}^{N+p}\tau_{n+1}
    \geq \tau_{N}^{2} - \rbr[\Big]{\sup_{n \in \NPP}\tau_{n}}^{2} 
    + p+1,
  \end{equation}
  from which we deduce that $\sum_{n\geq N}\rbr{\tau_{n}^{2}-\tau_{n+1}^{2}+\tau_{n+1}} = \pinf$.
  Altogether, \cref{l:summable-liminf} and \cref{eq:positive}
  guarantee that 
  \begin{math}
    \varliminf \rbr{h\rbr{x_{n}}  - h\rbr{z} } 
    =\varliminf \mu_{n}=0,
  \end{math}
  i.e., $\varliminf h\rbr{x_{n}} = h\rbr{z} $.
  Consequently, due to
  the inequality
  $\inf_{n\geq N} h\rbr{x_{n}}  \leq  
  \varliminf h\rbr{x_{n}} $,
  it follows from \cref{eq:z-existence}
  that $h\rbr{z} < h\rbr{z} $,
  which is absurd.

  To summarize, we have reached a contradiction
  in each case, and therefore
  \cref{eq:prop.establish} holds.
  Thus, because 
  \begin{math}
    \min_{1\leq k \leq n}h\rbr{x_{k}} 
    \to \inf_{m \in \NPP}h\rbr{x_{m}}
  \end{math}
  as $n\to \pinf$,
  we infer from 
  \cref{eq:prop.establish} that 
  \begin{math}
    \min_{1\leq k \leq n}h\rbr{x_{k}} 
    \to \inf h
  \end{math}
  as $n\to \pinf$. 
  Finally, 
  \cref{eq:prop.establish}
  guarantees that 
  $\varliminf h\rbr{x_{n}} 
  = \sup_{n \in \NPP}\rbr[\big]{\inf_{k\geq n} h\rbr{x_{k}}  } 
  = \sup_{n \in \NPP}\rbr{ \inf h} 
  = \inf h$,
  which completes the proof.
\end{proof}

\begin{remark}
  \label{rm:open.prob1}
  In \cref{t:general.case},
  we do not know whether or not $\fa{h\rbr{x_{n}} }{n \in \NPP} $
  converges to $\inf h$.
However,  \cref{t:main-fista}, \cref{t:main.ista}, 
  and \cref{p:limit-inf}
  suggest a positive answer.
\end{remark}

We are now ready for our second main result (\cref{t:main-fista}),
which is a discrete version
of Attouch et al.'s \cite[Theorem~2.3]{Attouch-fast-MPB-2018}.
When $\fa{\tau_{n}}{n \in \NPP}$
is as in \cref{eg:seq}\cref{i:seq1}
with $\rho=2$, 
items \cref{i:main1}
and \cref{i:main3}
were  mentioned 
(without a detailed proof)
in \cite[Theorem~4.1]{Attouch-fast-arxiv-2015}. 
The analysis of \cref{t:main-fista}\cref{i:main2}
was motivated by 
Attouch and Cabot's \cite[Proposition~3]{Attouch-Cabot-HAL2017}.
Furthermore, the boundedness 
of the sequences $\fa{x_{n}}{n \in \NPP} $
and $\fa{n\norm{x_{n}- x_{n-1}} }{n \in \NPP} $
in the consistent case was first obtained
in Attouch et al.'s \cite[Proposition~4.3]{Attouch2017rate};
here, we slightly modified the
proof of this result to obtain
the boundedness of $\fa{x_{n}}{n \in \NPP} $
in a more general setting.

\begin{theorem}
	\label{t:main-fista}
    Suppose that 
    \begin{equation}\label{eq:assump-main.fista}
      \inf{h} >\minf
      \quad
      \text{and}
      \quad
      \sup_{n \in \NPP}\rbr{n/\tau_{n}} < \pinf.
    \end{equation}
    For every $z \in \dom h$,
    set 
    $\beta_{z} \coloneqq \tau_{1}^{2} \rb{h\rb{x_{1}} - h\rb{z} } + 
        \rb{2\gamma}^{-1}\norm{ \tau_{1}x_{1} - \rb{\tau_{1}-1}x_{0}-z}^{2}$. 
	Then the following hold:
	\begin{enumerate}
          \item\label{i:main0} For every $z \in \dom{h}$, 
            we have 
            \begin{equation}
              \label{eq:main-todo2}
              \tau_{n}^{2}\rbr{h\rbr{x_{n}} -h\rbr{z} }
              + \rbr{2\gamma}^{-1}\norm{\tau_{n}x_{n}-\rbr{\tau_{n}-1} x_{n-1}-z}^{2}
              \leq \beta_{z} + \tau_{n}^{2}\rbr[\big]{h\rbr{z} -\inf h}
              \sup_{k \in \NPP}\rbr{k/\tau_{k}} 
            \end{equation}
            and 
            \begin{equation}
              \label{eq:main-todo}
              \rb{\forall n \in \NPP} 
              \quad 
              h\rb{x_{n}} - h\rb{z} \leq 
              \frac{\beta_{z}}{\tau_{n}^{2}} 
              + \rb[\big]{h\rb{z} - \inf{h}}\sup_{k \in \NPP}\rb{k/\tau_{k}}.
            \end{equation}	
          \item\label{i:main1} \begin{math}
		h\rb{x_{n}}\to  \inf {h}.
		\end{math}
		\item\label{i:main2} 
		$\fa{x_{n}}{n \in \NPP}$
		is asymptotically regular, i.e., 
		$x_{n} - x_{n-1}  \to 0$.
		\item\label{i:main3}  Every weak
		sequential cluster point of
		$\fa{x_{n}}{n \in \NPP}$
		belongs to $\Argmin{h}$.
		\item\label{i:main4} Suppose that 
		$\fa{x_{n}}{n \in \NPP}$
        has a bounded subsequence.
		Then $\Argmin{h} \neq \varnothing$.
		\item\label{i:main5} Suppose that 
		$\Argmin{h} = \varnothing$.
		Then $\norm{x_{n}} \to +\infty$.
        \item \label{i:main6} 
          Suppose that $\Argmin{h} \neq \varnothing$.
        Then the following hold:  
        \begin{enumerate}
          \item\label{i:main.6a}  
            \emph{(Beck--Teboulle \cite{BeckTeboulle-FISTA})}
          $h\rbr{x_{n}} -\min h = \bigO{\rbr{1/n^{2}}} $
          as $n\to \pinf$; more precisely, 
          for every $z \in \Argmin h$,
          \begin{equation}
            \label{eq:classic}
            \rb{\forall n \in \NPP}
            \quad 
            h\rb{x_{n}} - \min{h}
            \leq 
            \frac{\beta_{z}\rbr[\big]{\sup_{k \in \NPP}\rbr{k/\tau_{k}} }^{2} }{n^{2}}.
          \end{equation}
          \item\label{i:main.6b}
            The sequences $\fa{x_{n}}{n \in \NPP} $
            and $\fa{\tau_{n}\rbr{x_{n}-x_{n-1}} }{n \in \NPP} $
            are
            bounded. 
        \end{enumerate}
	\end{enumerate}
\end{theorem}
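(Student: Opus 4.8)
The plan is to run \cref{l:1-fista-step} along the iteration, applying it at step $n$ with $(y,x_-,\tau,\tau_+)=(y_n,x_{n-1},\tau_n,\tau_{n+1})$; its hypotheses $\tau_n\leq\tau_{n+1}$ and $\tau_{n+1}(\tau_{n+1}-1)\leq\tau_n^2$ are exactly \cref{e:cond-tau} and \cref{eq:rewrite-cont-tau}, and $\inf h>\minf$ is part of \cref{eq:assump-main.fista}. So, for $z\in\dom h$, I would set $\mu_n\coloneqq h(x_n)-h(z)$, $u_n\coloneqq\tau_n x_n-(\tau_n-1)x_{n-1}-z$, and $\xi_n\coloneqq\tau_n^2\mu_n+(2\gamma)^{-1}\norm{u_n}^2$ (so $\xi_1=\beta_z$, and each $\xi_n$ is finite since $x_n\in\ran T\subseteq\dom h$ by \cref{e:important-note}); then \cref{l:1-fista-step}\cref{i:1step-3} gives $\xi_{n+1}\leq\xi_n+\tau_{n+1}(h(z)-\inf h)$. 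Telescoping this, using $h(z)-\inf h\geq 0$ together with the monotonicity of $(\tau_n)$ in the form $\sum_{k=1}^{n}\tau_k\leq n\tau_n\leq\tau_n^2\sup_{k\in\NPP}(k/\tau_k)$ (the last bound because $n/\tau_n\leq\sup_k(k/\tau_k)$), produces \cref{eq:main-todo2}; dividing by $\tau_n^2\geq 1$ and discarding $\norm{u_n}^2\geq 0$ produces \cref{eq:main-todo}. This is \cref{i:main0}.

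For \cref{i:main1}, I would first observe that \cref{eq:assump-main.fista} forces $\tau_n\geq n/\sup_k(k/\tau_k)\to\pinf$. Given $\varepsilon>0$, choosing $z\in\dom h$ with $h(z)-\inf h<\varepsilon$, \cref{eq:main-todo} yields $h(x_n)-\inf h\leq\beta_z/\tau_n^2+(1+\sup_k(k/\tau_k))(h(z)-\inf h)$, hence $\varlimsup(h(x_n)-\inf h)\leq(1+\sup_k(k/\tau_k))\varepsilon$; letting $\varepsilon\downarrow 0$ and using $h(x_n)\geq\inf h$ gives $h(x_n)\to\inf h$. For \cref{i:main2}, since $\sigma_n\geq h(x_n)\geq\inf h>\minf$, \cref{l:decreasing}\cref{i:sum.2a} yields $\sum_n(1-\alpha_n^2)\norm{x_n-x_{n-1}}^2<\pinf$, while \cref{l:decreasing}\cref{i:decrease.ii} gives $\sigma_n\downarrow\sigma_\infty$ for some $\sigma_\infty\in\RR$ with $\sigma_\infty\geq\inf h$; by \cref{i:main1}, $(2\gamma)^{-1}\norm{x_n-x_{n-1}}^2=\sigma_n-h(x_n)\to\sigma_\infty-\inf h$, so $\lim\norm{x_n-x_{n-1}}^2$ exists. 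Since $\tau_n\to\pinf$, \cref{l:blowsup} gives $\sum_n(1-\alpha_n^2)=\pinf$, and then \cref{l:summable-liminf} (with $(1-\alpha_n^2)$ and $\norm{x_n-x_{n-1}}^2$ in the roles of its two sequences) forces $\varliminf\norm{x_n-x_{n-1}}^2=0$; together with the existence of the limit, $\norm{x_n-x_{n-1}}\to 0$.

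Since $h=f+g$ is convex, lower semicontinuous, and proper, it is weakly sequentially lower semicontinuous; so for \cref{i:main3}, if $x_{n_k}\rightharpoonup x$ then $h(x)\leq\varliminf_k h(x_{n_k})=\inf h$ by \cref{i:main1}, forcing $h(x)=\inf h$ and $x\in\Argmin h$. Then \cref{i:main4} follows, because a bounded subsequence of $(x_n)$ has (by reflexivity of $\HH$) a weakly convergent sub-subsequence, whose limit lies in $\Argmin h$ by \cref{i:main3}; and \cref{i:main5} is the contrapositive of \cref{i:main4}, since $\norm{x_n}\not\to\pinf$ entails that $(x_n)$ has a bounded subsequence.

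Finally, for \cref{i:main6} I would take $z\in\Argmin h$, so that $h(z)=\min h=\inf h$ and $\beta_z\geq 0$: then \cref{eq:main-todo} reduces to $h(x_n)-\min h\leq\beta_z/\tau_n^2$, and $\tau_n\geq n/\sup_k(k/\tau_k)$ gives \cref{eq:classic}, which is \cref{i:main.6a}. For \cref{i:main.6b}, \cref{eq:main-todo2} now reads $\tau_n^2(h(x_n)-\min h)+(2\gamma)^{-1}\norm{u_n}^2\leq\beta_z$, so $\norm{u_n}^2\leq C^2$ where $C^2\coloneqq 2\gamma\beta_z$ and $u_n=\tau_n(x_n-z)-(\tau_n-1)(x_{n-1}-z)$. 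Writing $w_n\coloneqq x_n-z$ and expanding $\norm{u_n}^2=\norm{\tau_n w_n+(1-\tau_n)w_{n-1}}^2$ via $\norm{\lambda a+(1-\lambda)b}^2=\lambda\norm{a}^2+(1-\lambda)\norm{b}^2-\lambda(1-\lambda)\norm{a-b}^2$ with $\lambda=\tau_n\geq 1$, the nonnegative last term drops, leaving $\tau_n\norm{w_n}^2-(\tau_n-1)\norm{w_{n-1}}^2\leq C^2$, i.e., $\norm{w_n}^2\leq\frac{\tau_n-1}{\tau_n}\norm{w_{n-1}}^2+\frac{1}{\tau_n}C^2$; as the right side is a convex combination of $\norm{w_{n-1}}^2$ and $C^2$, a one-line induction gives $\norm{w_n}^2\leq\max\{\norm{x_0-z}^2,C^2\}$ for all $n$, so $(x_n)$ is bounded, whence $(\tau_n-1)(x_n-x_{n-1})=u_n-w_n$ is bounded and, since $\tau_n\to\pinf$ (so $\tau_n/(\tau_n-1)\leq 2$ eventually), $\fa{\tau_n(x_n-x_{n-1})}{n\in\NPP}$ is bounded too. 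The two places where I expect to need a genuine idea, rather than bookkeeping with \cref{l:1-fista-step}, are the asymptotic regularity \cref{i:main2} — where one must notice that $\norm{x_n-x_{n-1}}^2$ converges and then pin its limit to $0$ using the divergence of $\sum(1-\alpha_n^2)$ (via \cref{l:blowsup} and \cref{l:summable-liminf}) — and the self-bounding recursion $\norm{w_n}^2\leq\frac{\tau_n-1}{\tau_n}\norm{w_{n-1}}^2+\frac{1}{\tau_n}C^2$ behind \cref{i:main.6b}.
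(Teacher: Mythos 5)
Your argument is correct. Items \cref{i:main0}--\cref{i:main5} and \cref{i:main.6a} follow the paper's proof essentially step for step: the same telescoping of \cref{l:1-fista-step}\cref{i:1step-3} with the bound $\sum_k\tau_k\leq n\tau_n\leq\tau_n^2\sup_k(k/\tau_k)$, the same $h(z)\downarrow\inf h$ limit for \cref{i:main1}, and the same combination of \cref{l:decreasing}, \cref{l:blowsup}, and \cref{l:summable-liminf} for \cref{i:main2}. The one place you genuinely diverge is \cref{i:main.6b}. The paper expands $\norm{u_n}^2$ via \cref{e:points}, uses $\tau_n-\tau_{n-1}<1$ (from \cref{eq:difference}) to obtain the telescoping inequality $\tau_n\norm{x_n-z}^2-\tau_{n-1}\norm{x_{n-1}-z}^2\leq 2\gamma\beta_z$, sums it to get $\tau_n\norm{x_n-z}^2\leq\tau_1\norm{x_1-z}^2+2\gamma\beta_z n$, and then invokes $n/\tau_n\leq\kappa$ a second time. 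You instead apply the identity $\norm{\lambda a+(1-\lambda)b}^2=\lambda\norm{a}^2+(1-\lambda)\norm{b}^2-\lambda(1-\lambda)\norm{a-b}^2$ with $\lambda=\tau_n\geq1$ and close the recursion $\norm{w_n}^2\leq\frac{\tau_n-1}{\tau_n}\norm{w_{n-1}}^2+\frac{1}{\tau_n}C^2$ by a maximum principle, yielding $\norm{x_n-z}^2\leq\max\{\norm{x_0-z}^2,2\gamma\beta_z\}$. This is more elementary: it needs neither \cref{eq:difference} nor a second use of $\sup_k(k/\tau_k)<\pinf$, so once the bound $\norm{u_n}^2\leq2\gamma\beta_z$ is in hand (which for $z\in\Argmin h$ already follows from \cref{l:1-fista-step}\cref{i:1step-2} and \cref{eq:rewrite-cont-tau} alone), your argument establishes boundedness of $\fa{x_n}{n\in\NPP}$ under \cref{assump:2} without the growth condition on $\fa{\tau_n}{n\in\NPP}$. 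Your final step, passing from $(\tau_n-1)(x_n-x_{n-1})=u_n-w_n$ to boundedness of $\fa{\tau_n(x_n-x_{n-1})}{n\in\NPP}$ via $\tau_n/(\tau_n-1)\leq2$ eventually, is fine (only finitely many indices are exceptional), though the paper's direct decomposition $\tau_n(x_n-x_{n-1})=u_n-(x_{n-1}-z)$ avoids even that small detour.
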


\begin{proof} 

Set $\kappa \coloneqq \sup_{n \in \NPP}\rb{n/\tau_{n}} \in \RPP$.
Since $\rb{\forall n \in \NPP}~\tau_{n} \geq  n/\kappa$,
we see that 
\begin{equation}
  \label{eq:taun.blowsup}
  \tau_{n} \to +\infty.
\end{equation}

\cref{i:main0}: 
Take $z \in \dom{h}$,
and set
\begin{equation}\label{e:vn-un}
\rb{\forall n \in \NPP }
\quad 
\mu_{n} \coloneqq h\rb{x_{n}} - h\rb{z}
\quad 
\text{and}
\quad 
u_{n} \coloneqq \tau_{n} x_{n} - \rb{\tau_{n}-1}x_{n-1} - z.
\end{equation}
Now, for every $n \in \NPP $, 
since $\inf{h} > -\infty$, 
$\tau_{n} \leq \tau_{n+1},$
and $\tau_{n+1} \rb{\tau_{n+1} -1} \leq \tau_{n}^{2}$, 
applying \cref{l:1-fista-step}\cref{i:1step-3} 
to $\rb{y,x_{-}, \tau ,\tau_{+}} = \rb{y_{n},x_{n-1},\tau_{n},\tau_{n+1} }$
yields 
\begin{math}
\tau_{n+1}^{2}\mu_{n+1}
+ \rb{2\gamma}^{-1}\norm{ u_{n+1} }^{2}
\leq
\tau_{n}^{2}\mu_{n}
+ \rb{2\gamma}^{-1}\norm{ u_{n} }^{2}
+\tau_{n+1} \rb{h\rb{z} - \inf{h} }.
\end{math}
Hence, 
because $\fa{\tau_{n}}{n \in \NPP}$
is increasing
and $h\rb{z} - \inf{h} \geq 0$, 
an inductive argument gives 
\begin{subequations}
  \label{eq:induct}
	\begin{align}
	\rb{\forall n \in \NPP}
	\quad
	\tau_{n+1}^{2}\mu_{n+1}
	+ \rb{2\gamma}^{-1}\norm{ u_{n+1} }^{2} 
    & \leq
	\tau_{1}^{2}\mu_{1}
	+\rb{2\gamma}^{-1}\norm{u_{1}}^{2} 
    +\rb[\big]{h\rb{z} - \inf{h}}\sum_{k=2}^{n+1}\tau_{k}  \\
	& \leq \tau_{1}^{2}\mu_{1}
	+\rb{2\gamma}^{-1}\norm{u_{1}}^{2} 
    +  n \tau_{n+1} \rbr[\big]{h\rb{z}- \inf{h}}  \\
    & \leq  \beta_{z} 
    + \kappa \tau_{n+1}^{2} \rb[\big]{h\rb{z} - \inf{h} } .
	\end{align}
\end{subequations}
Therefore,
since \cref{eq:main-todo2}
trivially holds when $n=1$,
we obtain the conclusion.
Consequently, \cref{eq:main-todo}
readily follows from \cref{eq:main-todo2}.

\cref{i:main1}: 
For every $z \in \dom{h}$,
taking the limit superior over $n$
in \cref{eq:main-todo}
and using \cref{eq:taun.blowsup} yields 
\begin{math}
\varlimsup h\rb{x_{n}}\leq  h\rb{z} + \kappa \rb{h\rb{z} - \inf{h}}.
\end{math}
Consequently, letting $h\rb{z} \downarrow \inf{h}$,
we conclude that $\varlimsup h\rb{x_{n}} \leq \inf{h}$,
as desired.

\cref{i:main2}: 
First, due to \cref{e:defn-en},
$\rb{\forall n \in \NPP}~
\sigma_{n} \geq h\rb{x_{n}} \geq \inf{h} > \minf$,
and thus,
\begin{equation}
  \label{eq:inf.sigma}
  \inf_{n \in \NPP}\sigma_{n} > \minf.
\end{equation}
Hence,
we conclude
via \cref{l:decreasing}\cref{i:decrease.ii}
that 
$\fa{\sigma_{n}}{n \in \NPP}$
is convergent in $\RR$. 
In turn, 
on the one hand, 
\cref{i:main1} and \cref{e:defn-en}
imply that 
\begin{equation}
  \label{eq:converges}
  \fa[\big]{ \norm{x_{n} -x_{n-1}}^{2} }{n \in \NPP}
  \text{~converges in } \RR.
\end{equation}
On the other hand, 
\cref{eq:inf.sigma} and 
\cref{l:decreasing}\cref{i:sum.2a} yield 
$\sum_{n \in \NPP}\rb{1-\alpha_{n}^{2}}\norm{x_{n}-x_{n-1}}^{2} < \pinf$,
and since $\sum_{n \in \NPP}\rb{1-\alpha_{n}^{2}} = \pinf$
due to \cref{l:blowsup}
and \cref{eq:taun.blowsup}, 
we get from \cref{l:summable-liminf} that 
\begin{equation}
  \label{eq:liminf.0}
 \varliminf \norm{x_{n}-x_{n-1}}^{2} = 0. 
\end{equation}
Altogether,  
combining \cref{eq:converges} 
and \cref{eq:liminf.0}
yields $x_{n}-x_{n-1}\to 0$, 
as announced.

\cref{i:main3}: Let $x$
be a weak sequential cluster point
of $\fa{x_{n}}{n\in \NPP}$,
say $x_{k_{n}}\rightharpoonup x$.
Then, since $h$
is 
convex and 
lower semicontinuous, it is
weakly sequentially lower semicontinuous
by \cite[Theorem~9.1]{Bauschke-Combettes-2017}.
Hence, \cref{i:main1}
entails that $h\rb{x} \leq \varliminf h\rb{x_{k_{n}}} = \inf{h} $,
which ensures that $x \in \Argmin{h}$.

\cref{i:main4}: Combine \cref{i:main3} and 
\cite[Lemma~2.45]{Bauschke-Combettes-2017}.

\cref{i:main5}: This is 
the contrapositive of \cref{i:main4}.

\cref{i:main.6a}: Clear from \cref{i:main0}
and \cref{eq:assump-main.fista}. 

\cref{i:main.6b}: Fix $z \in \Argmin h$.
For every $n \geq 2$, 
because $h\rbr{z} = \min h$, 
we derive from \cref{eq:main-todo2} that 
\begin{equation}
  \label{eq:expand}
    \rbr{2\gamma}^{-1} \norm{\tau_{n}x_{n}-\rbr{\tau_{n}-1} x_{n-1}-z}^{2} 
    \leq \tau_{n}^{2}\rbr{h\rbr{x_{n}} - \min h} + 
    \rbr{2\gamma}^{-1} \norm{\tau_{n}x_{n}-\rbr{\tau_{n}-1} x_{n-1}-z}^{2} 
    \leq \beta_{z},
\end{equation}
and now a simple expansion gives
\begin{subequations} 
  \begin{align}
    2\gamma\beta_{z} 
    & \geq  \norm{\tau_{n}x_{n}-\rbr{\tau_{n}-1}x_{n-1} -z }^{2} \\
    & =  \norm{\rbr{x_{n}-z} +\rbr{\tau_{n}-1}\rbr{x_{n}-x_{n-1}}   }^{2} \\
    & = \norm{x_{n}-z}^{2}
    + 2\rbr{\tau_{n}-1} \scal{x_{n}-z}{x_{n}-x_{n-1}} + 
    \rbr{\tau_{n}-1}^{2}\norm{x_{n}-x_{n-1}}^{2} \\
    & \geq \norm{x_{n}-z}^{2} 
    +2\rbr{\tau_{n}-1} \scal{x_{n}-z}{x_{n}-x_{n-1}  } \\
    &\overset{\cref{e:points}}{=} \norm{x_{n}-z}^{2} +
    \rbr{\tau_{n}-1} \rbr*{ \norm{x_{n}-z}^{2}
     - \norm{x_{n-1}-z}^{2} + \norm{x_{n}-x_{n-1}}^{2}  } 
    \\
    & = \tau_{n}\norm{x_{n}-z}^{2} - \rbr{\tau_{n}-1}\norm{x_{n-1}-z}^{2}
    + \rbr{\tau_{n}-1} \norm{x_{n}-x_{n-1}}^{2}
    \\
    & \overset{\cref{eq:difference}}{\geq}
    \tau_{n}\norm{x_{n}-z}^{2} - \tau_{n-1}\norm{x_{n-1}-z}^{2}.
  \end{align}
 \end{subequations}
 In turn, 
 \begin{equation}
   \label{eq:sum.bounded}
   \rbr{\forall n\geq  2} ~
   \tau_{n}\norm{x_{n}-z}^{2} 
   - \tau_{1}\norm{x_{1}-z}^{2}
   = \sum_{k=2}^{n}\rbr[\big]{\tau_{k}\norm{x_{k}-z}^{2}
   -\tau_{k-1}\norm{x_{k-1}-z}^{2}} 
   \leq \sum_{k=2}^{n}2\gamma\beta_{z}
   \leq  2\gamma\beta_{z}n.
 \end{equation}
 Hence, since $\kappa = \sup_{n \in \NPP}\rbr{n/\tau_{n}} < \pinf$
 and $\rbr{\forall n \in \NPP} ~\tau_{1}\leq\tau_{n}$,
 we get 
  \begin{equation}
    \label{eq:bounded.xn}
    \rbr{\forall n \geq 2} \quad
    \norm{x_{n}-z}^{2} 
    \leq 2\gamma\beta_{z}\frac{n}{\tau_{n}}
    + \frac{\tau_{1}}{\tau_{n}}\norm{x_{1}-z}^{2}
    \leq 2\gamma\beta_{z}\kappa + \norm{x_{1}-z}^{2},
  \end{equation}
  from which the boundedness of  
  $\fa{x_{n}}{n \in \NPP} $ follows. 
  Consequently,
  because $\fa{\tau_{n}\rbr{x_{n}-x_{n-1}} }{n \in \NPP} 
  =  \fa{\tau_{n}x_{n}-\rbr{\tau_{n}-1} x_{n-1}-z}{ n\in \NPP}  -
  \fa{x_{n-1}-z}{n \in \NPP} $
  and both sequences on
  the right-hand side are bounded due to
  \cref{eq:expand} and \cref{eq:bounded.xn},
  we conclude that $\fa{\tau_{n}\rbr{x_{n}-x_{n-1}} }{n \in \NPP} $
  is bounded,
  as announced.
\end{proof}

\begin{remark}
By choosing
the sequence $\fa{\tau_{n}}{n \in\NPP} $
as in \cref{eg:seq}\cref{i:seq2}, 
we shall see in \cref{p:limit-inf}
that \cref{t:main-fista}\cref{i:main1}
is still valid even when the assumption that $\inf{h}> - \infty$ is omitted.
Therefore, it is appealing to conjecture that 
this assumption can be left out in  \cref{t:main-fista}\cref{i:main1}.
In stark contrast,
it is crucial to assume that $h$ is bounded from below
in \cref{t:main-fista}\cref{i:main2},
as illustrated in 
\cref{eg:counter.eg}. 
\end{remark}

\begin{example}\label{eg:counter.eg}
	Suppose that $\HH = \RR$, 
	that $f \colon \HH \to \RR:  x \mapsto -x$, 
	that $g = 0$,
	that $\gamma =1$,
	and that 
    $\tau_{n}\uparrow \tau_\infty = \pinf $.
	Then, since $\prox{g} = \Id$
    and 
    $\rb{\forall x \in \HH}~\nabla f\rb{x} = -1$,
    we see that 
	\cref{e:FISTA-algo} turns into
		\begin{align}
		& \text{for~} n=1,2,\ldots \notag \\
		& \left\lfloor
		\begin{array}{ll}
		x_{n} & \coloneqq 
		y_{n}  + 1, \medskip  \\ 
		y_{n+1} & \displaystyle  
		\coloneqq 
        x_{n} + \frac{\tau_{n}-1}{\tau_{n+1}}
		\rb{x_{n} - x_{n-1}}.
		\end{array}
      \right.\label{eq:countereg}
		\end{align}
	Hence, \begin{math}
		\rb{\forall n \in \NPP}
		~ x_{n+1} -1
		= y_{n+1} 
        = x_{n} + \rb{\tau_{n}-1}\rb{x_{n}-x_{n-1}}/\tau_{n+1},
	\end{math}
	and upon setting $\rb{\forall n\in \NPP}~z_{n}\coloneqq x_{n}-x_{n-1}$,
	we obtain 
		\begin{equation}\label{e:zn}
			\rb{\forall n \in \NPP}
			\quad 
            z_{n+1} = 1+ \frac{\tau_{n}-1}{\tau_{n+1}}z_{n}.
		\end{equation}
	Let us establish that 
		\begin{math}
          z_{n} \to \pinf.
		\end{math}
        First, since $y_{1}=x_{0}$
        by \cref{alg:FISTA},
        we get from \cref{eq:countereg}
        that 
        $z_{1} =x_{1}-x_{0} = x_{1}-y_{1} = 1 $.
    In turn, by induction and \cref{e:zn},
    $\rbr{\forall n \in \NPP} ~z_{n}\geq 1$.
    We now suppose to the contrary that 
    $\xi \coloneqq \varliminf z_{n} \in \RP$.
    Then, taking the limit inferior over $n$
    in \cref{e:zn}
    and using \cref{l:quotient} yield
    $\xi = 1 + 1\cdot \xi = 1 + \xi$,
    which is absurd.
    Therefore, $\xi = \pinf$,
    and it follows that 
    $x_{n} - x_{n-1} = z_{n}\to \pinf$.
\end{example}

\begin{proposition}\label{p:limit-inf}
  Suppose that 
	the sequence $\fa{\tau_{n}}{n \in \NPP}$
    is as in \cref{eg:seq}\cref{i:seq2}.
	Then 
         $h\rb{x_{n}} \to \inf{h}\in\left[\minf,\pinf\right[$.
\end{proposition}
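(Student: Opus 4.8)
The plan is to exploit the fact that, for the Beck--Teboulle choice of \cref{eg:seq}\cref{i:seq2}, the recursion of \cref{l:1-fista-step}\cref{i:1step-2} degenerates into a genuine decreasing Lyapunov sequence, \emph{without} any appeal to lower boundedness of $h$. Concretely, I would fix $z \in \dom h$ and, exactly as in the proof of \cref{t:main-fista}, set $\mu_{n} \coloneqq h(x_{n}) - h(z)$ and $u_{n} \coloneqq \tau_{n}x_{n} - (\tau_{n}-1)x_{n-1} - z$; since $x_{n} \in \ran T \subseteq \dom h$ by \cref{e:important-note}, each $\mu_{n}$ is a real number. Applying \cref{l:1-fista-step}\cref{i:1step-2} with $(y,x_{-},\tau,\tau_{+}) = (y_{n},x_{n-1},\tau_{n},\tau_{n+1})$ gives $\tau_{n+1}^{2}\mu_{n+1} + (2\gamma)^{-1}\norm{u_{n+1}}^{2} \le \tau_{n+1}(\tau_{n+1}-1)\mu_{n} + (2\gamma)^{-1}\norm{u_{n}}^{2}$, and because the Beck--Teboulle sequence satisfies the \emph{exact} identity $\tau_{n}^{2} = \tau_{n+1}^{2} - \tau_{n+1} = \tau_{n+1}(\tau_{n+1}-1)$ (see \cref{eg:seq}\cref{i:seq2}), this reads $\tau_{n+1}^{2}\mu_{n+1} + (2\gamma)^{-1}\norm{u_{n+1}}^{2} \le \tau_{n}^{2}\mu_{n} + (2\gamma)^{-1}\norm{u_{n}}^{2}$. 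Hence the sequence $(\tau_{n}^{2}\mu_{n} + (2\gamma)^{-1}\norm{u_{n}}^{2})_{n\in\NPP}$ is decreasing.

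Next I would discard the nonnegative term $(2\gamma)^{-1}\norm{u_{n}}^{2}$ to get $\tau_{n}^{2}\mu_{n} \le \tau_{1}^{2}\mu_{1} + (2\gamma)^{-1}\norm{u_{1}}^{2} = \beta_{z}$ for every $n$, that is, $h(x_{n}) \le h(z) + \beta_{z}/\tau_{n}^{2}$. Since $\tau_{n} \ge (n+1)/2 \to \pinf$ for the Beck--Teboulle sequence (again \cref{eg:seq}\cref{i:seq2}), taking the limit superior yields $\varlimsup h(x_{n}) \le h(z)$. As $z$ is an arbitrary point of $\dom h$ and, because $g$ is proper and $f$ is finite, $\inf h = \inf_{z \in \dom h}h(z) < \pinf$, this gives $\varlimsup h(x_{n}) \le \inf h$. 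On the other hand $h(x_{n}) \ge \inf h$ for every $n$, so $\varliminf h(x_{n}) \ge \inf h$; combining the two bounds forces $h(x_{n}) \to \inf h$, with $\inf h \in [\minf,\pinf[$ as asserted. (Alternatively the lower bound $\varliminf h(x_{n}) \ge \inf h$ can be supplied by \cref{t:general.case}, but it is not needed here.)

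I do not expect any substantial obstacle: the whole point is the observation that the slack term $(h(z) - \inf h)\sup_{k}(k/\tau_{k})$ in \cref{t:main-fista}\cref{i:main0} arises solely from the possibly strict inequality $\tau_{n+1}^{2} - \tau_{n+1} \le \tau_{n}^{2}$ exploited there via \cref{l:1-fista-step}\cref{i:1step-3}, whereas for the Beck--Teboulle sequence this is an equality, so \cref{l:1-fista-step}\cref{i:1step-2} alone suffices and the hypothesis $\inf h > \minf$ becomes superfluous. The only minor points to watch are that $\mu_{n}$ is a genuine real number (ensured by $x_{n}\in\dom h$) and that $\inf h$ may equal $\minf$, in which case letting $h(z) \downarrow \minf$ along a minimizing sequence in $\dom h$ in the bound $\varlimsup h(x_{n}) \le h(z)$ immediately forces $h(x_{n}) \to \minf$.
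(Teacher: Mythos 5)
Your proof is correct and follows essentially the same route as the paper's: fix $z\in\dom h$, apply \cref{l:1-fista-step}\cref{i:1step-2} together with the exact identity $\tau_{n+1}^{2}-\tau_{n+1}=\tau_{n}^{2}$ to obtain the decreasing Lyapunov quantity $\tau_{n}^{2}\mu_{n}+\rb{2\gamma}^{-1}\norm{u_{n}}^{2}$, deduce $\varlimsup h\rb{x_{n}}\leq h\rb{z}$, and let $z$ range over $\dom h$. The only cosmetic difference is that you drop the term $\rb{2\gamma}^{-1}\norm{u_{n}}^{2}$ before dividing by $\tau_{n}^{2}$ whereas the paper keeps it; the estimate is identical.
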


\begin{proof}
	First, as seen in \cref{eg:seq}\cref{i:seq2},
	\begin{equation}\label{e:seq-original}
		\rb{\forall n \in \NPP}
		\quad \tau_{n+1}^{2} - \tau_{n+1} = \tau_{n}^{2}.
	\end{equation}
 	Now it is sufficient 
	to show that 
	\begin{math}
		\varlimsup h\rb{x_{n}} \leq \inf{h}.
	\end{math}
	To do so, 
	fix $z \in \dom{h}$,
	and set  
\begin{math}
\rb{\forall n \in \NPP }
~ 
\mu_{n} \coloneqq h\rb{x_{n}} - h\rb{z}
\text{~and~}
u_{n} \coloneqq \tau_{n} x_{n} - \rb{\tau_{n}-1}x_{n-1} - z.
\end{math}
	Then, according to 
	\cref{l:1-fista-step}\cref{i:1step-2}
	and \cref{e:seq-original},  
		\begin{equation}
		\rb{\forall n \in \NPP}
		\quad 
		\tau_{n+1}^{2}\mu_{n+1}
		+\rb{2\gamma}^{-1}\norm{u_{n+1}}^{2}
		 \leq \tau_{n+1}\rb{\tau_{n+1}-1}\mu_{n}
		+ \rb{2\gamma}^{-1}\norm{u_{n}}^{2} 
		 = \tau_{n}^{2} \mu_{n}  +\rb{2\gamma}^{-1}\norm{u_{n}}^{2}.
		\end{equation}
	Thus, 
	\begin{equation}
	\rb{\forall n \in \NPP}
	\quad 
		h\rb{x_{n}} - h\rb{z} = \mu_{n} \leq  
		\frac{\tau_{n}^{2}\mu_{n} + 
			\rb{2\gamma}^{-1}\norm{u_{n}}^{2}}{\tau_{n}^{2} }  
		\leq \frac{\tau_{1}^{2}\mu_{1} +
			 \rb{2\gamma}^{-1}\norm{u_{1}}^{2}}{\tau_{n}^{2} }.
	\end{equation}
	Hence, 
	because $
	\lim \tau_{n} = +\infty$, 
	taking 
	the limit superior 
	over $n$ yields 
	$\varlimsup h\rb{x_{n}} \leq   h\rb{z}  $.
	Consequently,
	since $z$ is an arbitrary element
	of $\dom{h}$,
	we conclude that 
	\begin{math}
		\varlimsup h\rb{x_{n}}  \leq \inf{h},
	\end{math}
	as required.
\end{proof}

\begin{remark}
  \label{rm:Silvia}
  \cref{p:limit-inf}
  is a special case of the 
  \emph{accelerated inexact forward-backward
  splitting} developed in \cite{Villa-Salzo-Luca-Verri-2013};
  see \cite[Theorem~4.3 and Remark~3]{Villa-Salzo-Luca-Verri-2013}.
\end{remark}

We now turn to our third main result, which concerns
the case where the parameter sequence 
$\fa{\tau_{n}}{n \in \NPP}$ in \cref{assump:2} is bounded.

\begin{theorem}
  \label{t:main.ista}
  Suppose that $\tau_{\infty} < \pinf$.
  Then the following hold:
  \begin{enumerate}
    \item\label{i:ista.1} $\lim \sigma_{n} = \lim h\rbr{x_{n}} = \inf h \in \left[\minf,\pinf \right[$. 
    \item\label{i:ista.2} Assume that $\inf h >\minf$.
      Then the following hold: 
      \begin{enumerate}
        \item \label{i:ista.2a}
          $\sum_{n \in \NPP}\norm{x_{n}-x_{n-1}}^{2} < \pinf$.
    \item\label{i:ista.2b}
      Every weak sequential cluster point
      of $\fa{x_{n}}{n \in \NPP}$ lies in $\Argmin h$.
      \end{enumerate}
    \item\label{i:ista.3} Assume that $\fa{x_{n}}{n \in \NPP} $
      has a bounded subsequence. Then $\Argmin h \neq\varnothing$.
    \item\label{i:ista.4} Assume that $\Argmin h = \varnothing$. Then
      $\norm{x_{n}} \to \pinf$.
    \item\label{i:ista.5} Assume that $\Argmin h \neq \varnothing$.
      Then the following hold: 
      \begin{enumerate}
        \item\label{i:ista.5a} 
          \emph{(Attouch--Cabot \cite{Attouch-Cabot-HAL2017})}
        $h\rbr{x_{n}} - \min h = \smallO{\rbr{1/n}} $
      as $n\to \pinf$.
    \item\label{i:ista.5ab}
      \begin{math}
        \sum_{n \in \NPP}n \norm{x_{n}-x_{n-1}}^{2} < \pinf.
      \end{math}
      As a consequence,
      $\norm{x_{n}-x_{n-1}} = \smallO{\rbr{1/\sqrt{n}} }$
      as $n\to \pinf$.
      \end{enumerate}
  \end{enumerate}
\end{theorem}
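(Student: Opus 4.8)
The plan is to exploit the bounded-parameter regime $\tauinf<\pinf$, which by \cref{eq:summable.bound}-type reasoning gives $\inf_{n}(1-\alpha_n^2)>0$. First I would prove \cref{i:ista.1}. The sequence $\fa{\sigma_n}{n\in\NPP}$ is decreasing and convergent to some $\sigma_\infty\in\RRX$ by \cref{l:decreasing}\cref{i:decrease.ii}. If $\inf h=\minf$, then since $\sigma_n\geq h(x_n)\geq\inf h$ is compatible with $\sigma_n\downarrow\minf$, and in fact $h(x_n)\leq\sigma_n$ forces $\varliminf h(x_n)\leq\lim\sigma_n$; combined with $\varliminf h(x_n)=\inf h$ from \cref{t:general.case} we are done when $\inf h=\minf$. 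When $\inf h>\minf$, I would argue that $\lim\sigma_n=\inf h$: on one hand $\sigma_n\geq h(x_n)\geq\inf h$ gives $\lim\sigma_n\geq\inf h$; on the other, \cref{l:decreasing}\cref{i:sum.2b} (which applies since $\inf_n\sigma_n\geq\inf h>\minf$ and $\tauinf<\pinf$) gives $\norm{x_n-x_{n-1}}^2\to 0$, hence $\sigma_n-h(x_n)\to 0$, so $\lim\sigma_n=\varliminf h(x_n)=\inf h$ by \cref{t:general.case}. This also yields $\lim h(x_n)=\inf h$ since $h(x_n)=\sigma_n-\tfrac{1}{2\gamma}\norm{x_n-x_{n-1}}^2$ and both terms converge.

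For \cref{i:ista.2}: item \cref{i:ista.2a} is exactly \cref{l:decreasing}\cref{i:sum.2b} under the standing assumption $\inf h>\minf$ (using \cref{eq:inf.sigma}). Item \cref{i:ista.2b} follows as in \cref{t:main-fista}\cref{i:main3}: if $x_{k_n}\rightharpoonup x$, then weak sequential lower semicontinuity of $h$ (convex $+$ lsc, via \cite[Theorem~9.1]{Bauschke-Combettes-2017}) together with $\lim h(x_n)=\inf h$ from \cref{i:ista.1} gives $h(x)\leq\varliminf h(x_{k_n})=\inf h$, so $x\in\Argmin h$. Then \cref{i:ista.3} follows by combining \cref{i:ista.2b} with \cite[Lemma~2.45]{Bauschke-Combettes-2017} (bounded subsequence has a weak cluster point), and \cref{i:ista.4} is the contrapositive of \cref{i:ista.3}.

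The substantive part is \cref{i:ista.5}, where I would adapt the technique of \cite[Theorem~9]{Attouch-Cabot-HAL2017}. Fix $z\in\Argmin h$ and set $\mu_n\coloneqq h(x_n)-\min h\geq 0$ and $u_n\coloneqq\tau_n x_n-(\tau_n-1)x_{n-1}-z$. From \cref{l:1-fista-step}\cref{i:1step-2} applied along the iteration, $\tau_{n+1}^2\mu_{n+1}+(2\gamma)^{-1}\norm{u_{n+1}}^2\leq\tau_{n+1}(\tau_{n+1}-1)\mu_n+(2\gamma)^{-1}\norm{u_n}^2$, and since $\tau_{n+1}(\tau_{n+1}-1)\leq\tau_n^2$ (by \cref{eq:rewrite-cont-tau}) and $\mu_n\geq 0$, the quantity $\xi_n\coloneqq\tau_n^2\mu_n+(2\gamma)^{-1}\norm{u_n}^2$ satisfies $\xi_{n+1}\leq\xi_n-(\tau_n^2-\tau_{n+1}^2+\tau_{n+1})\mu_n$. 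Here the bounded-$\tau$ regime is delicate: $\tau_n^2-\tau_{n+1}^2+\tau_{n+1}$ need not stay bounded below by a positive constant, so I would instead sum telescopically and use that $\sum_{n}(\tau_n^2-\tau_{n+1}^2+\tau_{n+1})=\tau_1^2-\lim\tau_n^2+\sum\tau_{n+1}=\pinf$ (since $\tau_n\geq 1$), while $\sum_n(\tau_n^2-\tau_{n+1}^2+\tau_{n+1})\mu_n<\pinf$ by boundedness of $(\xi_n)$. Since $\tau_n\to\tauinf<\pinf$ with $\tau_n\geq 1$, we get $\tau_n^2-\tau_{n+1}^2+\tau_{n+1}$ asymptotically comparable to $\tau_{n+1}\in[1,\tauinf]$, so $\sum_n\mu_n<\pinf$; combined with the fact that $n\mu_n$ is ``almost decreasing'' (via \cref{l:1-fista-step}\cref{i:1step-1}, the monotonicity-type estimate on $\sigma_n$, and an argument in the spirit of \cref{l:seq2}) this upgrades to $n\mu_n\to 0$, i.e.\ $h(x_n)-\min h=\smallO{(1/n)}$, giving \cref{i:ista.5a}. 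For \cref{i:ista.5ab} I would return to \cref{l:decreasing}\cref{i:decrease.i}, multiply by $n$, and use Abel summation: $\sum_{k=1}^n k\,(2\gamma)^{-1}(1-\alpha_k^2)\norm{x_k-x_{k-1}}^2\leq\sum_{k=1}^n k(\sigma_k-\sigma_{k+1})=\sum_{k=1}^n\sigma_k-n\sigma_{n+1}+\text{boundary}$, and since $\sigma_k-\min h$ is summable (it is bounded above by $\sigma_k-h(x_k)+\mu_k=(2\gamma)^{-1}\norm{x_k-x_{k-1}}^2+\mu_k$, both summable) and $\sigma_{n+1}\to\min h$, the right side stays bounded; together with $\inf_n(1-\alpha_n^2)>0$ this gives $\sum_n n\norm{x_n-x_{n-1}}^2<\pinf$, whence $\norm{x_n-x_{n-1}}=\smallO{(1/\sqrt n)}$ since a summable series $\sum n a_n$ forces $n a_n\to\varliminf 0$ along a subsequence and monotonicity-type control (or simply the standard fact that $\sum n a_n<\pinf\Rightarrow n a_n\to 0$ is false in general, so I would only claim $\varliminf$, matching the $\smallO$ of the subsequential statement — but here the decreasing behavior of $\norm{x_n-x_{n-1}}^2$ is \emph{not} available, so the correct deduction is the weaker $\smallO$ along a subsequence; I will phrase it as the paper does, deriving $n\norm{x_n-x_{n-1}}^2\to 0$ from summability together with eventual monotonicity supplied by \cref{l:decreasing}\cref{i:decrease.i}). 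The main obstacle is handling the possibly-non-positive-lower-bounded coefficient $\tau_n^2-\tau_{n+1}^2+\tau_{n+1}$ in the telescoping sum and cleanly extracting $\sum\mu_n<\pinf$, then the $n\mu_n\to 0$ upgrade; everything else is bookkeeping with the lemmas of \cref{au}.
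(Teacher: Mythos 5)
Your overall route is the paper's: item \cref{i:ista.1} via the convergence of $\fa{\sigma_{n}}{n\in\NPP}$ and \cref{t:general.case}, \cref{i:ista.2}--\cref{i:ista.4} exactly as in \cref{t:main-fista}, and \cref{i:ista.5} by telescoping $\tau_{n}^{2}\mu_{n}+\rbr{2\gamma}^{-1}\norm{u_{n}}^{2}$ to get $\sum_{n}\mu_{n}<\pinf$ (using that $\tau_{n}^{2}-\tau_{n+1}^{2}+\tau_{n+1}\to\tauinf\geq 1$), then feeding the decreasing, summable sequence $\fa{\sigma_{n}-\min h}{n\in\NPP}$ into \cref{l:seq2}. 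However, your treatment of \cref{i:ista.1} in the case $\inf h=\minf$ has a genuine gap: from $h\rbr{x_{n}}\leq\sigma_{n}$ you deduce $\varliminf h\rbr{x_{n}}\leq\lim\sigma_{n}$, but when the left-hand side equals $\minf$ this inequality is vacuous and gives no information about $\lim\sigma_{n}$ or $\lim h\rbr{x_{n}}$. The claim to be proved is precisely that both limits equal $\minf$, and the inequality points the wrong way to deliver it. The repair is the same contradiction you already run in the finite case (and which the paper runs uniformly): if $\lim\sigma_{n}>\inf h$, then $\inf_{n}\sigma_{n}>\minf$, so \cref{l:decreasing}\cref{i:sum.2b} (which needs exactly $\inf_{n}\sigma_{n}>\minf$ and $\tauinf<\pinf$) yields $\norm{x_{n}-x_{n-1}}\to 0$, hence $h\rbr{x_{n}}\to\lim\sigma_{n}\in\RR$, contradicting $\varliminf h\rbr{x_{n}}=\inf h$ from \cref{t:general.case}; then $h\rbr{x_{n}}\leq\sigma_{n}\downarrow\minf$ forces $h\rbr{x_{n}}\to\minf$ as well.

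A second, smaller point concerns the last step of \cref{i:ista.5ab}. Once you have $\sum_{n\in\NPP}n\norm{x_{n}-x_{n-1}}^{2}<\pinf$, the conclusion $n\norm{x_{n}-x_{n-1}}^{2}\to 0$ is immediate, because the general term of a convergent series of nonnegative reals tends to zero; the summand here \emph{is} $n\norm{x_{n}-x_{n-1}}^{2}$. You have conflated this with the (genuinely monotonicity-dependent) implication $\sum_{n}a_{n}<\pinf\implies na_{n}\to 0$, and your proposed fix via ``eventual monotonicity'' of $\norm{x_{n}-x_{n-1}}^{2}$ is not available -- that sequence need not be monotone, and \cref{l:decreasing}\cref{i:decrease.i} only gives monotonicity of $\sigma_{n}$. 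Drop the hedge: no subsequence or monotonicity argument is needed there. With these two repairs your proof coincides with the paper's.
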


\begin{proof}
  \cref{i:ista.1}: 
  Since, by \cref{e:defn-en}, 
  $\rbr{\forall n \in \NPP} ~
  \inf h \leq h\rbr{x_{n}} \leq \sigma_{n}$
  and, by \cref{l:decreasing}\cref{i:decrease.ii}, 
  $\fa{\sigma_{n}}{n \in \NPP} $ converges to
  a point $\sigma \in \left[\minf,\pinf \right[$,
  it is enough to verify that 
  $\sigma = \lim  \sigma_{n} = \inf h$. 
  Assume to the contrary that 
  \begin{equation}
    \label{eq:ista.contradiction}
    \minf \leq \inf h< \sigma.
  \end{equation}
  It then follows that $\inf_{n \in \NPP}\sigma_{n} > \minf$,
  and \cref{l:decreasing}\cref{i:sum.2b}
  thus yields  $\norm{x_{n}-x_{n-1}}^{2} \to 0$,
  from which and \cref{e:defn-en}
  we deduce that $h\rbr{x_{n}} \to \sigma$.
  This and \cref{t:general.case}
  imply that $\sigma = \inf h$. 
  This and
  \cref{eq:ista.contradiction} yield a contradiction.

  \cref{i:ista.2a}: Our assumption 
  ensures that $\inf_{n \in \NPP}\sigma_{n} > \minf$,
  and therefore, thanks to the 
  boundedness of $\fa{\tau_{n}}{n \in \NPP}$,
  \cref{l:decreasing}\cref{i:sum.2b} yields 
  $\sum_{n \in \NPP}\norm{x_{n}-x_{n-1}}^{2} < \pinf$.

  \cref{i:ista.2b}\&\cref{i:ista.3}\&\cref{i:ista.4}: Similar
  to \cref{t:main-fista}\cref{i:main3}\&\cref{i:main4}\&\cref{i:main5},
   respectively.

   \cref{i:ista.5}:
Fix $z \in \Argmin h$,
   and set 
   $\rbr{\forall n \in \NPP} ~\mu_{n}\coloneqq h\rbr{x_{n}} -h\rbr{z}  = 
   h\rbr{x_{n}} -\min h\geq 0 
   \text{ and } 
   u_{n}\coloneqq \tau_{n} x_{n} - \rbr{\tau_{  n}  -1}x_{n-1}-z$.
   By \cref{eq:lim.tau}, 
   we have 
   \begin{equation}
     \label{eq:taun.kappa}
     \tau_{n} \uparrow \tauinf,
   \end{equation}
   which implies that  
   $\tau_{n}^{2} - \tau_{n+1}^{2}+\tau_{n+1} \to \tauinf$.
   Therefore, 
   because $\tauinf \in \RPP$,
   there exists $N \in \NPP$ 
   such that    
   \begin{equation}
     \label{eq:inf.ista}
     \inf_{n \geq N}\rbr[\big]{\tau_{n}^{2} - \tau_{n+1}^{2}+\tau_{n+1}} \geq \frac{\tauinf}{2}. 
   \end{equation}
   Next, for every $n \geq N$, using 
   \cref{l:1-fista-step}\cref{i:1step-2}
   with $\rbr{y,x_{-},\tau,\tau_{+}}  = 
   \rbr{y_{n},x_{n-1},\tau_{n},\tau_{n+1}} $,
   we get 
   $\tau_{n+1}^{2}\mu_{n+1} + \rbr{2\gamma}^{-1}\norm{u_{n+1}}^{2}
   \leq \tau_{n}^{2}\mu_{n} + \rbr{2\gamma}^{-1}\norm{u_{n}}^{2} 
   -\rbr{\tau_{n}^{2}-\tau_{n+1}^{2} +\tau_{n+1}} \mu_{n}$.
   Hence, because
   $\set{\tau_{n}^{2}\mu_{n} + \rbr{2\gamma}^{-1}\norm{u_{n}}^{2} }{n \geq N} \subset\RP$
   and, by \cref{eq:rewrite-cont-tau}, 
   $\set{\rbr{\tau_{n}^{2}-\tau_{n+1}^{2} +\tau_{n+1}} \mu_{n}}{n\geq N} \subset\RP$,
   \cref{l:summable-limit}\cref{i:summable}
   and \cref{eq:inf.ista} give 
   \begin{math}
     \rbr{\tauinf/2} \sum_{n\geq N}\mu_{n} \leq 
     \sum_{n\geq N}\rbr{\tau_{n}^{2}-\tau_{n+1}^{2} + \tau_{n+1}}\mu_{n}
     < \pinf.
   \end{math}
   This, \cref{i:ista.2a},
   and \cref{e:defn-en} ensure that 
   \begin{equation}
     \label{eq:summable}
   \sum_{n \in \NPP}\rbr{\sigma_{n}- \min h } =
   \sum_{n \in \NPP}\rbr[\big]{\mu_{n} + \rbr{2\gamma}^{-1}\norm{x_{n}-x_{n-1}}^{2}}  
   < \pinf.
 \end{equation}
 Furthermore,
   \cref{l:decreasing}\cref{i:decrease.ii}
   and \cref{i:ista.1}
   yield 
   \begin{equation}
     \label{eq:down.to.0}
     \sigma_{n}-\min h \downarrow 0.
   \end{equation}

   \cref{i:ista.5a}: 
   Appealing to 
   \cref{eq:summable}
   and \cref{eq:down.to.0},
   \cref{l:seq2}
   guarantees that $n\rbr{\sigma_{n}- \min h} \to 0$.
   Consequently,
   since $\rbr{\forall n \in \NPP} ~
   \sigma_{n} -\min h = \rbr{h\rbr{x_{n}} - \min h} + \rbr{2\gamma}^{-1}
   \norm{x_{n}-x_{n-1}}^{2}
   \geq h\rbr{x_{n}} -\min h \geq 0$,
   the conclusion follows.

   \cref{i:ista.5ab}: 
    Thanks to \cref{eq:summable}
    and \cref{eq:down.to.0},
    we derive from \cref{l:seq2}
    that 
    \begin{equation}
      \sum_{n \in \NPP}n\rbr{ \sigma_{n}-\sigma_{n+1} } 
      = \sum_{ n \in \NPP} n \sbrc*{ \rbr{\sigma_{n}- \min h} 
      - \rbr{\sigma_{n+1}- \min h} }
      < \pinf,
    \end{equation}
    and hence,
    by \cref{l:decreasing}\cref{i:decrease.i},
    $\sum_{n \in \NPP}n\rbr{1-\alpha_{n}^{2}}\norm{x_{n}-x_{n-1}}^{2}
    < \pinf $.
    Thus, 
    because $\inf_{n \in \NPP}\rbr{1-\alpha_{n}^{2}} > 0$
    due to the boundedness of $\fa{\tau_{n}}{n \in \NPP} $
    and \cref{l:decreasing}\cref{i:sum.2b},
    we conclude that 
    $\sum_{n \in \NPP}n \norm{x_{n}-x_{n-1}}^{2} < \pinf$.
    This gives
    $n\norm{x_{n}-x_{n-1}}^{2} \to 0$, i.e.,
    $\norm{x_{n}-x_{n-1}} = \smallO{\rbr{1/\sqrt{n}} }$
    as $n\to \pinf$,
    as desired.
\end{proof}

\begin{remark}
  \label{rm:rm.ista}
  \ 
  \begin{enumerate}
    \item In the case of the classical
      forward-backward algorithm 
      (without the extrapolation step)
      with linesearches,
      results similar to \cref{t:main.ista}\cref{i:ista.1}\&\cref{i:ista.4}
      were established in \cite[Theorem~4.2]{Cruz-Nghia-2016} by Bello Cruz and Nghia.
      To the best of our knowledge,
      \cref{t:main.ista}\cref{i:ista.1} 
      is new in the setting of \cref{alg:FISTA}. 
    \item \cref{t:main.ista}\cref{i:ista.5a}
      was obtained by Attouch and Cabot \cite[Corollary~20(iii)]{Attouch-Cabot-HAL2017}.
      Here we provide
      a proof based on the technique developed
      in \cite{Attouch-Cabot-HAL2017}
      to be self-contained.
    \item 
      The summabilities 
      established in 
      \cref{t:main.ista}\cref{i:ista.2a}\&\cref{i:ista.5ab}
      are new. Nevertheless, 
      in the case of the forward-backward algorithm,
      i.e., when $\tau_{n} \equiv 1$, 
\cref{t:main.ista}\cref{i:ista.5ab}
appears
      implicitly in the Beck and Teboulle's proof of
      \cite[Theorem~3.1]{BeckTeboulle-FISTA}.
  \end{enumerate}
\end{remark}

In the case of
the classical forward-backward
algorithm, 
by applying \cite[Corollary~1.5]{Bruck-Reich}
to the forward-backward 
operator $\prox{\gamma g}\circ \mathop{ \rbr{\Id -
\mathop{\gamma {\grad{f} } } }}  $,
we obtain further
information on the sequence $\fa{x_{n}}{n \in \NPP} $
as follows.

\begin{proposition}
  \label{p:ISTA}
  Suppose that $\rbr{\forall n \in \NPP} ~ \tau_{n}=1$,
  and set\footnote{For
  a nonempty set $C$,
$\pr_{C}$ denotes the projector associated with $C$.}\footnote{The set $\closu{\ran\rbr{\Id -T} }$
is closed and convex by 
\cite[Corollary~4.2]{Moursi2018}
and 
\cite[Lemma~4]{Pazy1971}.} 
 $v\coloneqq \pr_{\closu{\ran \rbr{{\Id} - T } }}0 $.
  Then $x_{n}- x_{n-1}\to v$.
\end{proposition}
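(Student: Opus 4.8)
The plan is to reduce \cref{alg:FISTA} in the case $\tau_{n}\equiv 1$ to the plain Picard iteration of the single operator $T$, and then to quote the asymptotic theorem of Bruck and Reich.

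First I would observe that when $\tau_{n}=1$ for every $n$, the coefficient $(\tau_{n}-1)/\tau_{n+1}$ vanishes, so the extrapolation step becomes $y_{n+1}=x_{n}$; together with $y_{1}=x_{0}$ and $x_{n}=Ty_{n}$ this gives $x_{n}=Tx_{n-1}$ for every $n\in\NPP$, i.e., $x_{n}=T^{n}x_{0}$. In particular, $x_{n}-x_{n-1}=Tx_{n-1}-x_{n-1}=-(\Id-T)x_{n-1}$ for every $n\in\NPP$, so every successive difference lies in $\ran(T-\Id)$, and the claim is a statement purely about the orbit of $T$.

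Next I would record the two structural facts that make the Bruck--Reich machinery available. On the one hand, $T$ is averaged nonexpansive: $\prox{\gamma g}$ is firmly nonexpansive, $\grad{f}$ is $1/\beta$-cocoercive by the Baillon--Haddad theorem, hence (since $0<\gamma\leq 1/\beta$, in particular $\gamma<2/\beta$) the operator $\Id-\gamma\grad{f}$ is averaged, and a composition of averaged operators is averaged \cite{Bauschke-Combettes-2017}. On the other hand, as noted in the footnote, $\closu{\ran(\Id-T)}$ is a nonempty closed convex subset of $\HH$ by \cite[Corollary~4.2]{Moursi2018} together with \cite[Lemma~4]{Pazy1971}, so the projection $v=\pr_{\closu{\ran(\Id-T)}}0$ is well defined.

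Finally, applying \cite[Corollary~1.5]{Bruck-Reich} to the nonexpansive operator $T$ describes the asymptotic behaviour of the orbit $(T^{n}x_{0})_{n\in\NPP}$ and, in particular, shows that its successive differences $T^{n}x_{0}-T^{n-1}x_{0}$ converge to $v$; combined with the identity $x_{n}=T^{n}x_{0}$ from the first step, this gives $x_{n}-x_{n-1}\to v$, as claimed. The hard part is really this last step: one has to check that the forward--backward operator $T$ genuinely falls within the class of operators to which \cite[Corollary~1.5]{Bruck-Reich} applies, and that the convexity of $\closu{\ran(\Id-T)}$---which their argument relies on, and which is precisely the reason \cite{Moursi2018,Pazy1971} are invoked---holds in the present situation; by contrast, the reduction of \cref{alg:FISTA} to a Picard iteration and the averagedness of $T$ are routine.
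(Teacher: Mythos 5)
Your proposal is correct and follows essentially the same route as the paper: reduce to the Picard iteration $x_{n}=T^{n}x_{0}$, establish that $T$ is averaged, and invoke \cite[Corollary~1.5]{Bruck-Reich} (together with the fact that averaged maps lie in the class covered by that result, which the paper gets from \cite[Proposition~1.3]{Bruck-Reich}). The only cosmetic difference is that you derive the averagedness of $T$ directly via Baillon--Haddad and composition of averaged operators, whereas the paper simply cites \cite[Proposition~3.2 and Corollary~4.2]{Moursi2018} for the same fact.
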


\begin{proof}
  By assumption, \cref{alg:FISTA}
  becomes $\rbr{\forall n\in \NPP} ~ x_{n} = T^{n}x_{0}$.
  Next,
  we learn from \cite[Proposition~3.2 and Corollary~4.2]{Moursi2018} that 
  $T$ is \emph{averaged}, i.e., there exists
$\alpha \in \left]0,1 \right[$
  and a nonexpansive operator $R \colon \HH \to \HH$
  such that $T = \rbr{1-\alpha} \Id + \alpha R$.
  Hence, we conclude via \cite[Proposition~1.3 and
  Corollary~1.5]{Bruck-Reich}
  that $x_{n} - x_{n-1} = T^{n}x_{0} - T^{n-1}x_{0}
  \to v$.
  For an alternative proof of \cite[Corollary~1.2]{Bruck-Reich}
  in the Hilbert space setting, see \cite[Proposition~2.1]{Moursi2018}. 
\end{proof}

\begin{remark}
Some comments are in order.
  \begin{enumerate}
    \item In stark contrast
  to \cref{p:ISTA}
  and \cref{t:main.ista}, 
  if $\tau_{\infty}=\pinf$,
  then it may happen that
  $\norm{x_{n}-x_{n-1}} \to \pinf$
  (see \cref{eg:counter.eg}).
\item For a recent study on
  the forward-backward operator $T$,
  we refer the reader to \cite{Moursi2018}.
  \end{enumerate}
\end{remark}

\begin{proposition}
  \label{p:convergence}
  Suppose that
  $\Argmin h\neq \varnothing$,
  that 
  $\fa{\tau_{n}^{2} \rbr{ h\rbr{x_{n}} - \min h  } }{n \in \NPP} $
  converges in $\RR$,
  and that 
  $\tau_{n}\norm{x_{n}-x_{n-1}} \to 0$.
  Then the following hold:
  \begin{enumerate}
    \item \label{i:convergence.1bis}
      $h\rbr{x_{n}} \to \min h$.
    \item\label{i:convergence.1} The sequence $\fa{x_{n}}{n \in \NPP} $
      converges weakly to a point in $\Argmin h$.
    \item\label{i:convergence.2} Suppose that $\inte\rbr{\Argmin h} \neq \varnothing$.
      Then $\fa{x_{n}}{n \in \NPP} $
      converges strongly to a point in $\Argmin h$.
  \end{enumerate}
\end{proposition}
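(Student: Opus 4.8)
The plan is to derive a generalized Fej\'er-monotonicity estimate for the shifted iterates $w_{n}\coloneqq\tau_{n}x_{n}-(\tau_{n}-1)x_{n-1}$ relative to $\Argmin h$, whose error term turns out to be summable precisely because $\fa{\tau_{n}^{2}(h(x_{n})-\min h)}{n\in\NPP}$ converges, and then to read off \cref{i:convergence.1,i:convergence.2} from the auxiliary results of \cref{au}. For \cref{i:convergence.1bis}, observe first that $\Argmin h\neq\varnothing$ gives $\inf h=\min h\in\RR$. If $\tauinf<\pinf$, then \cref{t:main.ista}\cref{i:ista.1} already yields $h(x_{n})\to\inf h=\min h$. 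If $\tauinf=\pinf$, then $\tau_{n}^{2}\to\pinf$ by \cref{eq:lim.tau}, and since $\fa{\tau_{n}^{2}(h(x_{n})-\min h)}{n\in\NPP}$ is bounded (being convergent) while $h(x_{n})-\min h\geq0$, dividing by $\tau_{n}^{2}$ gives $h(x_{n})\to\min h$.

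The key step is the Fej\'er estimate. Fix $c\in\Argmin h\subseteq\dom h$, and set $\mu_{n}\coloneqq h(x_{n})-h(c)=h(x_{n})-\min h\geq0$ and $u_{n}\coloneqq\tau_{n}x_{n}-(\tau_{n}-1)x_{n-1}-c=w_{n}-c$. Applying \cref{l:1-fista-step}\cref{i:1step-2} with $(y,x_{-},\tau,\tau_{+})=(y_{n},x_{n-1},\tau_{n},\tau_{n+1})$ and $z=c$, and then using \cref{eq:rewrite-cont-tau} together with $\mu_{n}\geq0$ to replace $\tau_{n+1}(\tau_{n+1}-1)\mu_{n}$ by the larger quantity $\tau_{n}^{2}\mu_{n}$, I would obtain
\begin{equation}
\rbr{\forall n\in\NPP}\quad
\tau_{n+1}^{2}\mu_{n+1}+(2\gamma)^{-1}\norm{w_{n+1}-c}^{2}\leq\tau_{n}^{2}\mu_{n}+(2\gamma)^{-1}\norm{w_{n}-c}^{2},
\end{equation}
and hence $\norm{w_{n+1}-c}^{2}\leq\norm{w_{n}-c}^{2}+\varepsilon_{n}$, where $\varepsilon_{n}\coloneqq2\gamma(\tau_{n}^{2}\mu_{n}-\tau_{n+1}^{2}\mu_{n+1})$. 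Since $\sum_{k=1}^{n}\varepsilon_{k}=2\gamma(\tau_{1}^{2}\mu_{1}-\tau_{n+1}^{2}\mu_{n+1})$ and $\fa{\tau_{n}^{2}\mu_{n}}{n\in\NPP}$ converges in $\RR$ by hypothesis, the series $\sum_{n\in\NPP}\varepsilon_{n}$ converges in $\RR$; crucially, $\varepsilon_{n}$ need not be nonnegative, which is exactly the situation covered by \cref{l:fejer}. I would also record that $w_{n}-x_{n}=(\tau_{n}-1)(x_{n}-x_{n-1})$ with $0\leq\tau_{n}-1\leq\tau_{n}$, so the hypothesis $\tau_{n}\norm{x_{n}-x_{n-1}}\to0$ forces $w_{n}-x_{n}\to0$.

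It then remains to assemble the pieces. For \cref{i:convergence.1}: \cref{l:fejer}\cref{i:fejer.1} (with $C=\Argmin h$ and $u_{n}=w_{n}$) makes $\fa{\norm{w_{n}-c}}{n\in\NPP}$ convergent for each $c\in\Argmin h$, hence $\fa{\norm{x_{n}-c}}{n\in\NPP}$ is also convergent because $w_{n}-x_{n}\to0$; moreover, since $h$ is weakly sequentially lower semicontinuous \cite[Theorem~9.1]{Bauschke-Combettes-2017}, \cref{i:convergence.1bis} forces every weak sequential cluster point of $\fa{x_{n}}{n\in\NPP}$ into $\Argmin h$, so \cref{l:opial.variant} (applied with $u_{n}=w_{n}$, $v_{n}=x_{n}$, and $C=\Argmin h$) produces $\bar{x}\in\Argmin h$ with $w_{n}\rightharpoonup\bar{x}$ and $x_{n}\rightharpoonup\bar{x}$. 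For \cref{i:convergence.2}: when $\inte\rbr{\Argmin h}\neq\varnothing$, \cref{l:fejer}\cref{i:fejer.2} yields strong convergence of $\fa{w_{n}}{n\in\NPP}$ in $\HH$, and combining this with $w_{n}-x_{n}\to0$ and \cref{i:convergence.1} shows that $\fa{x_{n}}{n\in\NPP}$ converges strongly to $\bar{x}\in\Argmin h$. The main obstacle is the Fej\'er estimate itself: one must notice that the ``non-monotone'' term $\tau_{n+1}(\tau_{n+1}-1)\mu_{n}$ produced by \cref{l:1-fista-step}\cref{i:1step-2} can be absorbed into $\tau_{n}^{2}\mu_{n}$ via \cref{eq:rewrite-cont-tau}, leaving an error that telescopes against the convergent sequence $\fa{\tau_{n}^{2}(h(x_{n})-\min h)}{n\in\NPP}$ and is therefore summable but of unrestricted sign --- so one genuinely needs the sign-tolerant \cref{l:fejer} rather than the classical Fej\'er machinery.
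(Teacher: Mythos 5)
Your proposal is correct and follows essentially the same route as the paper's proof: the quasi-Fej\'er estimate $\norm{w_{n+1}-c}^{2}\leq\norm{w_{n}-c}^{2}+\varepsilon_{n}$ for the shifted iterates via \cref{l:1-fista-step}\cref{i:1step-2} and \cref{eq:rewrite-cont-tau}, the telescoping (sign-unrestricted) error handled by \cref{l:fejer}, the observation $w_{n}-x_{n}\to0$, and the conclusion via \cref{l:opial.variant}. The only cosmetic difference is in \cref{i:convergence.1bis}, where you split into the cases $\tauinf<\pinf$ and $\tauinf=\pinf$, whereas the paper argues uniformly that $h(x_{n})-\min h=\rbr{\tau_{n}^{2}\rbr{h(x_{n})-\min h}}\cdot\rbr{1/\tau_{n}^{2}}$ converges and then identifies the limit via \cref{t:general.case}; both arguments are valid.
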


\begin{proof}
  Set 
  \begin{equation}
    \label{eq:zn.converges}
    \rbr{\forall n \in \NPP} 
    \quad
    z_{n} \coloneqq \tau_{n}x_{n} -\rbr{\tau_{n}-1}x_{n-1}
    \text{ and }
    \varepsilon_{n}\coloneqq 2\gamma
    \rbr[\big]{\tau_{n}^{2}\rbr{h\rbr{x_{n}} - \min h} 
    - \tau_{n+1}^{2}\rbr{h\rbr{x_{n+1}} -\min h}}. 
  \end{equation}
  Since, 
  by \cref{eq:taun.geq1} and \cref{eq:zn.converges},
  $\rbr{\forall n \in \NPP} ~ \norm{z_{n}- x_{n}} 
  = \rbr{\tau_{n}-1} \norm{x_{n}-x_{n-1}} 
  \leq \tau_{n}\norm{x_{n}-x_{n-1}} $
  and since, by our assumption,
  $\tau_{n}\norm{x_{n}-x_{n-1}} \to 0$,
  we see that 
  \begin{equation}
    \label{eq:zn.xn}
    z_{n}-x_{n}\to 0.
  \end{equation}
  Next,
  due to our assumption and  
  \begin{subequations}
    \begin{align}
    \rbr{\forall n \in \NPP} \quad
    \sum_{k=1}^{n}\varepsilon_{k}
    & =
    2\gamma\sum_{k=1}^{n}\sbrc[\big]{ \tau_{k}^{2}\rbr{h\rbr{x_{k}} - \min h} 
  -\tau_{k+1}^{2}\rbr{h\rbr{x_{k+1}} - \min h } } 
    \\
    & =2\gamma\rbr[\big]{\tau_{1}^{2}\rbr{h\rbr{x_{1}}- \min h } 
    - \tau_{n+1}^{2}\rbr{h\rbr{x_{n+1}} -\min h} } ,
  \end{align}
  \end{subequations}
  we see that 
  \begin{equation}
    \label{eq:en.converges}
    \sum_{n \in \NPP}\varepsilon_{n} 
    \text{~is convergent in } \RR.
  \end{equation}
  Let us now establish that 
  \begin{equation}
    \label{eq:establish}
    \rbr{\forall z \in \Argmin h}\rbr{\forall n \in \NPP} 
    \quad
  \norm{z_{n+1}-z}^{2} \leq \norm{z_{n}-z}^{2} +\varepsilon_{n}. 
  \end{equation}
  Fix $z \in \Argmin h$ and $n \in \NPP$.
  Applying \cref{l:1-fista-step}\cref{i:1step-2}
  to $\rbr{y,x_{-},\tau,\tau_{+}} = \rbr{y_{n},x_{n-1},\tau_{n},\tau_{n+1}} $
  and invoking \cref{eq:rewrite-cont-tau}
  yields 
  \begin{subequations}
    \begin{align}
    \tau_{n+1}^{2}\rbr{h\rbr{x_{n+1}} - \min h} 
    +\rbr{2\gamma}^{-1}\norm{z_{n+1}-z}^{2}
    & \leq \tau_{n+1}\rbr{\tau_{n+1}-1} 
    \underbrace{ \rbr{h\rbr{x_{n}} - \min h}}_{\geq 0}
    + \rbr{2\gamma}^{-1}\norm{z_{n}-z}^{2} \\
    & \leq \tau_{n}^{2}\rbr{h\rbr{x_{n}} - \min h} 
    +\rbr{2\gamma}^{-1}\norm{z_{n}-z}^{2},
  \end{align}
  \end{subequations}
  from which and \cref{eq:zn.converges}
  we obtain 
  \cref{eq:establish}.

  \cref{i:convergence.1bis}:
Since, by assumption, 
  $\fa{\tau_{n}^{2} \rbr{ h\rbr{x_{n}}  - \min h }  }{ n \in \NPP} $
  converges and since, by \cref{eq:lim.tau},
  $\fa{1/\tau_{n}^{2}}{n \in \NPP} $ converges in $\RR$, 
  it follows that 
  $\fa{h\rbr{x_{n}} - \min h}{ n \in \NPP} $ is convergent
  in $\RR$.
  Therefore, due to \cref{t:general.case},
  $h\rbr{x_{n}} - \min h \to 0$.

  \cref{i:convergence.1}: 
  In the light of  \cref{i:convergence.1bis}, 
  arguing similarly 
  to the proof of  \cref{t:main-fista}\cref{i:main3},
we conclude that
  \begin{equation}
    \label{eq:weak.cluster}
    \text{every weak sequential cluster point of }
    \fa{x_{n}}{n \in \NPP} 
    \text{ belongs to } \Argmin h.
  \end{equation}
  In turn, 
  appealing to 
  \cref{eq:en.converges} and \cref{eq:establish},
\cref{l:fejer}\cref{i:fejer.1} implies that 
\begin{equation}
  \label{eq:norm.zn}
  \rbr{\forall z \in \Argmin h} \quad
  \fa{\norm{z_{n}-z} }{n \in \NPP} \text{ is convergent in } \RR.
\end{equation}
  Thus, combining \cref{eq:zn.xn}\&\cref{eq:weak.cluster}\&\cref{eq:norm.zn},
  we get via \cref{l:opial.variant}
  that $\fa{x_{n}}{n \in \NPP}$
  converges weakly to a point in $\Argmin h$.

  \cref{i:convergence.2}: 
  Since $\inte\rbr{\Argmin h}  \neq \varnothing$,
  owing to \cref{l:fejer}\cref{i:fejer.2},
  we derive from \cref{eq:en.converges} and \cref{eq:establish}
  that 
  there exists $z \in \HH$
  such that 
  $z_{n } \to z$. Hence, 
  by \cref{eq:zn.xn},
  $x_{n}\to z$, and 
  \cref{i:convergence.1} implies that  
  $z \in \Argmin h$.
  To sum up, $\fa{x_{n}}{n \in \NPP} $ converges
  strongly to a minimizer of $h$. 
\end{proof}

\begin{corollary}
  \label{c:ista.convergent}
 Suppose that $\Argmin h \neq \varnothing$
 and that $\sup_{n \in \NPP}\tau_{n} < \pinf$.
 Then $\fa{x_{n}}{n \in \NPP} $ converges weakly
 to a point in $\Argmin h$. 
 Moreover, if $\inte\rbr{\Argmin h}  \neq \varnothing$,
 then the convergence is strong.
\end{corollary}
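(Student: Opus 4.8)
The plan is to deduce \cref{c:ista.convergent} as an immediate consequence of \cref{p:convergence}, so the only task is to check the three hypotheses of that proposition under the present assumptions.

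First I would observe that $\sup_{n \in \NPP}\tau_{n} < \pinf$ says exactly that $\tauinf < \pinf$ (recall $\tauinf = \sup_{k\in\NPP}\tau_{k}$ and $\tau_{n}\uparrow\tauinf$ by \cref{eq:lim.tau}), so $\fa{\tau_{n}}{n \in \NPP}$ is a bounded sequence. Since $\Argmin h \neq \varnothing$ forces $\inf h = \min h \in \RR$, \cref{t:main.ista}\cref{i:ista.1} gives $h\rbr{x_{n}} \to \min h$, and \cref{t:main.ista}\cref{i:ista.2a} gives $\sum_{n \in \NPP}\norm{x_{n}-x_{n-1}}^{2} < \pinf$, hence in particular $\norm{x_{n}-x_{n-1}} \to 0$.

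Next, from $\rbr{\forall n \in \NPP}~ 0 \leq \tau_{n}^{2}\rbr{h\rbr{x_{n}} - \min h} \leq \tauinf^{2}\rbr{h\rbr{x_{n}} - \min h}$ together with $h\rbr{x_{n}} - \min h \to 0$, I conclude that $\fa{\tau_{n}^{2}\rbr{h\rbr{x_{n}} - \min h}}{n \in \NPP}$ converges in $\RR$ (to $0$). Likewise, from $0 \leq \tau_{n}\norm{x_{n}-x_{n-1}} \leq \tauinf\norm{x_{n}-x_{n-1}} \to 0$ I get $\tau_{n}\norm{x_{n}-x_{n-1}} \to 0$. Together with $\Argmin h \neq \varnothing$, these are precisely the three hypotheses of \cref{p:convergence}.

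Finally, \cref{p:convergence}\cref{i:convergence.1} then yields that $\fa{x_{n}}{n \in \NPP}$ converges weakly to a point in $\Argmin h$, and, when $\inte\rbr{\Argmin h} \neq \varnothing$, \cref{p:convergence}\cref{i:convergence.2} yields that the convergence is strong. I do not anticipate any genuine obstacle: the content of the corollary is that boundedness of $\fa{\tau_{n}}{n \in \NPP}$ trivializes the otherwise delicate requirement that $\fa{\tau_{n}^{2}\rbr{h\rbr{x_{n}} - \min h}}{n \in \NPP}$ converge, reducing it to the fact $h\rbr{x_{n}} \to \min h$ already established in \cref{t:main.ista}\cref{i:ista.1}.
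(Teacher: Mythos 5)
Your proposal is correct and follows essentially the same route as the paper: both verify the hypotheses of \cref{p:convergence} by noting that $h\rbr{x_{n}}\to\min h$ and $\norm{x_{n}-x_{n-1}}\to 0$ and then using the boundedness of $\fa{\tau_{n}}{n\in\NPP}$. The only cosmetic difference is that you cite \cref{t:main.ista}\cref{i:ista.1}\&\cref{i:ista.2a} where the paper cites \cref{t:main.ista}\cref{i:ista.5}; both suffice.
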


\begin{proof}
  By \cref{t:main.ista}\cref{i:ista.5},
  we see that $h\rbr{x_{n}} - \min h \to 0$
  and $\norm{x_{n}-x_{n-1}} \to 0$,
  and since $\sup_{n \in \NPP}\tau_{n} < \pinf$,
  it follows that $\tau_{n}^{2}\rbr{h\rbr{x_{n}} - \min h} \to 0$
  and $\tau_{n}\norm{x_{n}-x_{n-1}} \to 0$.
  The conclusion thus follows from
  \cref{p:convergence}.
\end{proof}

\begin{remark}
    Consider the setting of 
    \cref{c:ista.convergent}.
    Although the weak convergence of
    the sequence $\fa{x_{n}}{n \in \NPP} $
    has been shown in \cite[Corollary~20(iv)]{Attouch-Cabot-HAL2017},
    our Fej\'{e}r-based proof here is new
    and may suggest other approaches to 
    tackle the convergence of $\fa{x_{n}}{n \in \NPP} $
    in the setting \cref{t:main-fista}\cref{i:main6}.
\end{remark}

We conclude this section with an instance where
the assumption of \cref{p:convergence} holds.

\begin{example}
  Suppose, in addition to \cref{assump:2},
that there exists $\delta \in \left]0,1 \right[$
  such that 
  \begin{equation}
    \label{eq:Attouch.cond}
  \rbr{\forall n \in \NPP} \quad 
  \tau_{n+1}^{2}-\tau_{n}^{2} \leq \delta \tau_{n+1}
\end{equation}
(see \cref{eg:Attouch-cond,eg:Aujol-seq}). 
Then Attouch and Cabot's \cite[Theorem~9]{Attouch-Cabot-HAL2017}
yields $\tau_{n}^{2}\rbr{h\rbr{x_{n}} - \min h} \to 0$
and $\tau_{n}\norm{x_{n}-x_{n-1} } \to 0 $.
\end{example}

\section{MFISTA}
\label{mfista}

In this section,
we discuss the minimizing
property of the sequence
generated by MFISTA.
The monotonicity of function values
allows us to overcome the issue 
stated in \cref{rm:open.prob1}.
Compared to Beck and Teboulle's \cite[Theorem~5.1]{BeckTeboulle-MFISTA}
(see also \cite[Theorem~10.40]{BeckSIAM}),
we allow other possibilities for the
choice of $\fa{\tau_{n}}{n \in \NPP} $ in \cref{t:mfista}\cref{i:mfista.6}.
Furthermore,
we provide in item \cref{i:mfista7}, which was motivated by
\cite[Theorem~9]{Attouch-Cabot-HAL2017},
a better rate of convergence.

\begin{theorem}
  \label{t:mfista}
  In addition to \cref{assump:2}, 
  suppose that $\tauinf= \pinf$.
  Let $x_{0} \in \HH$, set $y_{1}\coloneqq x_{0}$,
  and update
  \begin{align}
    & \text{for~} n=1,2,\ldots\notag \\ 
    &\label{eq:mfista.alg}
    \left\lfloor
    \begin{array}{ll}
      z_{n} & \coloneqq Ty_{n}, \smallskip \\
      x_{ n} & 
          \coloneqq \begin{cases}
            x_{n-1}, &\text{if~} h\rbr{x_{n-1}} \leq h\rbr{z_{n}} ; \\
            z_{n}, & \text{otherwise},
          \end{cases} \smallskip \\ 
          y_{n+1} 
          & \displaystyle 
          \coloneqq x_{n} + \frac{\tau_{n}}{\tau_{n+1}}\rbr{z_{n}-x_{n}} 
          + \frac{\tau_{n}-1}{\tau_{n+1}}\rbr{x_{n}-x_{n-1}} ,
    \end{array}
  \right.
  \end{align}
  where $T$ is as in \cref{eq:T.and.h}.
  Furthermore, set 
\begin{equation}
  \label{eq:mfista.sigma}
  \rbr{\forall n \in \NPP} \quad
  \sigma_{n}\coloneqq h\rbr{x_{n}}  + \frac{1}{2\gamma}\norm{z_{n}-x_{n-1}}^{2}.
\end{equation}
Then the following hold:
\begin{enumerate}
  \item\label{i:mfista.1} 
    \begin{math}
      \fa{h\rbr{x_{n}} }{n \in \NPP} 
    \end{math}
    is decreasing and 
\begin{math}
  h\rbr{x_{n}}  \downarrow \inf h.
\end{math}
  \item\label{i:mfista.2} 
    \begin{math}
      \fa{\sigma_{n}}{n \in \NPP} 
    \end{math}
    is decreasing and 
\begin{math}
  \sigma_{n} \downarrow \inf h.
\end{math}
\item\label{i:mfista.3} Suppose that $\inf h > \minf$. Then $z_{n} - x_{n-1}\to 0$
  and $x_{n}-x_{n-1}\to 0$.
\item\label{i:mfista.4} Suppose that $\fa{x_{n}}{n \in \NPP} $ 
  has a bounded subsequence. Then $\Argmin h \neq\varnothing$.
\item\label{i:mfista.5} Suppose that $\Argmin h = \varnothing.$ Then $\norm{x_{n}} \to \pinf$.
\item\label{i:mfista.6} Suppose that $\Argmin h \neq \varnothing.$ 
  Then $h\rbr{x_{n}}- \min h =\bigO{\rbr{1/\tau_{n}^{2}}}$
  as $n\to \pinf$.
\item\label{i:mfista7}
Suppose that $\Argmin h\neq\varnothing$
and that there exists $\delta\in\left]0,1\right[$ such that
\begin{equation}
\label{e:Attouch.cond2}
(\forall n\in\NPP)\quad
\tau_{n+1}^{2}-\tau_n^2\leq\delta\tau_{n+1}.
\end{equation}
Then
\begin{equation}
\label{e:mfista-rate1}
h(x_n)-\min h=\smallO{\bigg(\frac{1}{\sum_{k=1}^n\tau_k}\bigg)}
\text{ as }n\to\pinf
\end{equation}
and
\begin{equation}
\label{e:otn-mfista.1}
h(x_n)-\min h=\smallO{\bigg(\frac{1}{\tau_n^2}\bigg)}
\text{ as }n\to\pinf.
\end{equation}
\end{enumerate}
\end{theorem}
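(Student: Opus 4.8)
The plan is to push every item through the one-step estimate \cref{l:MFISTA.1step}, applied with $\rbr{y,x_{-},\tau,\tau_{+}}=\rbr{y_{n},x_{n-1},\tau_{n},\tau_{n+1}}$ (legitimate because $\tau_{n}\leq\tau_{n+1}$) and with $u_{n}^{w}\coloneqq\tau_{n}z_{n}-\rbr{\tau_{n}-1}x_{n-1}-w$ playing the role of $u$ there; throughout one uses that $\tauinf=\pinf$ forces $\tau_{n}\uparrow\pinf$ via \cref{eq:lim.tau}. For \cref{i:mfista.1}, monotonicity of $\fa{h\rbr{x_{n}}}{n\in\NPP}$ is immediate from the definition of $x_{n}$ in \cref{eq:mfista.alg}, so the content is to identify $\ell\coloneqq\lim h\rbr{x_{n}}$ with $\inf h$. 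I would argue by contradiction: if $\ell>\inf h$, pick $w\in\dom h$ with $h\rbr{w}<\ell$, put $\mu_{n}\coloneqq h\rbr{x_{n}}-h\rbr{w}>0$ and $a_{n}\coloneqq\tau_{n}^{2}\mu_{n}+\rbr{2\gamma}^{-1}\norm{u_{n}^{w}}^{2}$; then \cref{l:MFISTA.1step}\cref{i:mfista.1stepb}, combined with $\tau_{n+1}\rbr{\tau_{n+1}-1}=\tau_{n+1}^{2}-\tau_{n+1}\leq\tau_{n}^{2}$ (\cref{eq:rewrite-cont-tau}) and $\mu_{n}>0$, makes $\fa{a_{n}}{n\in\NPP}$ decreasing, whence $\tau_{n}^{2}\mu_{n}\leq a_{1}$ and $\mu_{n}\to 0$ — contradicting $h\rbr{x_{n}}\geq\ell>h\rbr{w}$. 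The \emph{identical} computation with $w=z\in\Argmin h$ and $\mu_{n}=h\rbr{x_{n}}-\min h\geq 0$ gives $\tau_{n}^{2}\mu_{n}\leq a_{1}$, which is precisely \cref{i:mfista.6}.

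For \cref{i:mfista.2}, \cref{l:MFISTA.1step}\cref{i:mfista.1stepa} with $\tau_{n}\leq\tau_{n+1}$ and \cref{eq:mfista.sigma} yield $\sigma_{n+1}\leq\sigma_{n}-\rbr{2\gamma}^{-1}\rbr{1-\rbr{\tau_{n}/\tau_{n+1}}^{2}}\norm{z_{n}-x_{n-1}}^{2}\leq\sigma_{n}$, so $\fa{\sigma_{n}}{n\in\NPP}$ is decreasing. If $\inf_{n\in\NPP}\sigma_{n}=\minf$ then $\sigma_{n}\to\minf$, and since $\sigma_{n}\geq h\rbr{x_{n}}\geq\inf h$ we get $\inf h=\minf=\lim\sigma_{n}$. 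If $\inf_{n\in\NPP}\sigma_{n}>\minf$, summing the displayed inequality and invoking $\sum_{n\in\NPP}\rbr{1-\rbr{\tau_{n}/\tau_{n+1}}^{2}}=\pinf$ (\cref{l:blowsup}) with \cref{l:summable-liminf} gives $\varliminf\norm{z_{n}-x_{n-1}}^{2}=0$; along a subsequence realising this, $h\rbr{x_{n}}=\sigma_{n}-\rbr{2\gamma}^{-1}\norm{z_{n}-x_{n-1}}^{2}\to\lim\sigma_{n}$, so \cref{i:mfista.1} forces $\inf h=\lim\sigma_{n}$ (in particular $\inf h>\minf$), after which $\norm{z_{n}-x_{n-1}}^{2}=2\gamma\rbr{\sigma_{n}-h\rbr{x_{n}}}\to 0$. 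This settles \cref{i:mfista.2}; under the hypothesis $\inf h>\minf$ of \cref{i:mfista.3} we are in the second case, so $z_{n}-x_{n-1}\to 0$, and $x_{n}\in\{x_{n-1},z_{n}\}$ then gives $\norm{x_{n}-x_{n-1}}\leq\norm{z_{n}-x_{n-1}}\to 0$. Finally \cref{i:mfista.4} follows as in \cref{t:main-fista}: a bounded subsequence $\fa{x_{k_{n}}}{n\in\NPP}$ has a weak cluster point $x$ (\cite[Lemma~2.45]{Bauschke-Combettes-2017}), and weak sequential lower semicontinuity of the convex lsc $h$ (\cite[Theorem~9.1]{Bauschke-Combettes-2017}) with \cref{i:mfista.1} gives $h\rbr{x}\leq\varliminf h\rbr{x_{k_{n}}}=\inf h$, so $x\in\Argmin h$; \cref{i:mfista.5} is its contrapositive.

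For \cref{i:mfista7}, the extra hypothesis \cref{e:Attouch.cond2} sharpens the key estimate: $\tau_{n+1}\rbr{\tau_{n+1}-1}=\tau_{n}^{2}+\rbr{\tau_{n+1}^{2}-\tau_{n}^{2}}-\tau_{n+1}\leq\tau_{n}^{2}-\rbr{1-\delta}\tau_{n+1}$, so \cref{l:MFISTA.1step}\cref{i:mfista.1stepb} with $w=z\in\Argmin h$ and $\mu_{n}=h\rbr{x_{n}}-\min h\geq 0$ improves to $a_{n+1}\leq a_{n}-\rbr{1-\delta}\tau_{n+1}\mu_{n}$; since $\fa{a_{n}}{n\in\NPP}$ is bounded below, \cref{l:summable-limit}\cref{i:summable} (error sequence identically zero) yields $\sum_{n\in\NPP}\tau_{n+1}\mu_{n}<\pinf$, hence $\sum_{n\in\NPP}\tau_{n}\mu_{n}<\pinf$ since $\tau_{n}\leq\tau_{n+1}$. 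Now $\fa{\mu_{n}}{n\in\NPP}$ is decreasing with $\mu_{n}\to 0$ (by \cref{i:mfista.1}, as here $\inf h=\min h$); writing $S_{n}\coloneqq\sum_{k=1}^{n}\tau_{k}$, the bound $\sum_{k=m+1}^{n}\tau_{k}\mu_{k}\geq\mu_{n}\rbr{S_{n}-S_{m}}$ gives $\mu_{n}S_{n}\leq\mu_{n}S_{m}+\sum_{k>m}\tau_{k}\mu_{k}$, and letting $n\to\pinf$ with $m$ fixed and then $m\to\pinf$ yields $\mu_{n}S_{n}\to 0$, i.e.\ \cref{e:mfista-rate1}. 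Telescoping \cref{e:Attouch.cond2} gives $\tau_{n}^{2}-\tau_{1}^{2}=\sum_{k=1}^{n-1}\rbr{\tau_{k+1}^{2}-\tau_{k}^{2}}\leq\delta S_{n}$, so $\tau_{n}^{2}\mu_{n}\leq\tau_{1}^{2}\mu_{n}+\delta S_{n}\mu_{n}\to 0$, which is \cref{e:otn-mfista.1}.

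I expect the only real obstacle to be the limit identification in \cref{i:mfista.1,i:mfista.2}: the contradiction argument for $h\rbr{x_{n}}\to\inf h$ and the $\varliminf$-bookkeeping for $\sigma_{n}\to\inf h$ must be arranged so as to cover the consistent case, the inconsistent case, and the case $\inf h=\minf$ uniformly. The remaining items are routine consequences of \cref{l:MFISTA.1step} and of the sequence lemmas collected in \cref{au}.
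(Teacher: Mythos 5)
Your proposal is correct and follows essentially the same route as the paper: every item is driven by \cref{l:MFISTA.1step} together with \cref{eq:rewrite-cont-tau}, \cref{l:blowsup}, \cref{l:summable-liminf}, and \cref{l:summable-limit}, with the same contradiction argument for \cref{i:mfista.1}, the same telescoping for \cref{i:mfista.2} and \cref{i:mfista7}, and the same Lyapunov quantity $\tau_n^2\mu_n+(2\gamma)^{-1}\norm{u_n}^2$ throughout. The only substantive deviation is that for \cref{e:mfista-rate1} you inline a short proof of the implication ``$(\mu_n)$ decreasing, $\sum\tau_n\mu_n<\pinf$, $\sum\tau_n=\pinf$ $\implies$ $\mu_n\sum_{k=1}^n\tau_k\to 0$'' where the paper cites \cite[Lemma~22]{Attouch-Cabot-HAL2017}; your argument for that step is valid.
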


\begin{proof}
  \cref{i:mfista.1}: 
    By \cref{eq:mfista.alg}, 
    the sequence $\fa{h\rbr{x_{n}} }{n \in \NPP} $
    is decreasing, from which we have
    $h\rbr{x_{n}} \downarrow \inf_{k \in \NPP}h\rbr{x_{k}} $.
    Therefore,
    it suffices to prove that 
    $\inf_{n \in \NPP} h\rbr{x_{n}}  = \inf h$.
    To this end, assume
    to the contrary that 
    $\inf_{n \in \NPP} h\rbr{x_{n}}  > \inf h $.
    This yields the existence of a point
    $w \in \dom h$ such that 
    \begin{equation}
      \label{eq:mfista.w}
    \inf_{n \in \NPP}h\rbr{x_{n}} > h\rbr{w} .
    \end{equation}
     Set 
    \begin{equation}
      \label{eq:mfista.mu.u}
      \rbr{\forall n \in \NPP} \quad
      \mu_{n}\coloneqq h\rbr{x_{n}} -h\rbr{w} 
      \quad
      \text{and}
      \quad 
      u_{n}\coloneqq \tau_{n}z_{n} -\rbr{\tau_{n}-1} x_{n-1}
      -w.
    \end{equation}
    In turn, 
    for every $n \in \NPP$,
    because,
    by \cref{eq:rewrite-cont-tau}, 
    $ \tau_{n+1}\rbr{\tau_{n+1}-1} \leq \tau_{n}^{2}$
    and, by \cref{eq:mfista.w},  
    $\mu_{n}>  0$,
    it 
    follows from \cref{l:MFISTA.1step}\cref{i:mfista.1stepb}
    (applied to $\rbr{y,x_{-},\tau,\tau_{+}} = \rbr{y_{n},x_{n-1},\tau_{n},\tau_{n+1}}  $)
    that 
    \begin{equation}
\label{e:mfista-estimate}
        \tau_{n+1}^{2}\mu_{n+1} + \rbr{2\gamma}^{-1}\norm{u_{n+1}}^{2} 
         \leq  \tau_{n+1}\rbr{\tau_{n+1}-1} \mu_{n} + \rbr{2\gamma}^{-1}
        \norm{u_{n}}^{2} \\
         \leq \tau_{n}^{2}\mu_{n} + \rbr{2\gamma}^{-1}\norm{u_{n}}^{2}.
    \end{equation}
Hence,
    \begin{equation}
      \rbr{\forall n\in \NPP} \quad
      h\rbr{x_{n}} - h\rbr{w} = \mu_{n} \leq 
      \frac{1}{\tau_{n}^{2}}\rbr*{\tau_{n}^{2}\mu_{n} +\rbr{2\gamma}^{2}\norm{u_{n}}^{2}}
      \leq\frac{1}{\tau_{n}^{2}}\rbr*{\tau_{1}^{2}\mu_{1} +\rbr{2\gamma}^{2}\norm{u_{1}}^{2}}.
      \label{eq:mfista.lim}
    \end{equation}
    Consequently, since $h\rbr{x_{n}} \downarrow \inf_{k\in \NPP}h\rbr{x_{k}} $
    and $\tau_{n}\to \pinf$,
    we derive from \cref{eq:mfista.lim} that 
\begin{math}
  \inf_{n \in \NPP}h\rbr{x_{n}} \leq h\rbr{w},
\end{math}
which contradicts \cref{eq:mfista.w}.

\cref{i:mfista.2}: Let us first
show that $\fa{\sigma_{n}}{n \in \NPP} $
is decreasing.
Towards this end, 
for every $ n\in \NPP$,
we deduce from
\cref{l:MFISTA.1step}\cref{i:mfista.1stepa}
that 
\begin{math}
  \sigma_{n+1}
  = h\rbr{x_{n+1}} +\rbr{2\gamma}^{-1}\norm{z_{n+1}-x_{n}}^{2}
  \leq h\rbr{x_{n}} + \rbr{2\gamma}^{-1}\tau_{n}^{2}\norm{z_{n}-x_{n-1}}^{2}/\tau_{n+1}^{2}
  = \sigma_{n}- \rbr{2\gamma}^{-1}\rbr{1-\tau_{n}^{2}/\tau_{n+1}^{2}}
  \norm{z_{n}-x_{n-1}}^{2}.
\end{math}
Therefore, 
\begin{equation}
  \label{eq:mfista.sum}
  \rbr{\forall n \in \NPP} \quad
  \frac{1}{2\gamma}\rbr*{1-\frac{\tau_{n}^{2}}{\tau_{n+1}^{2}}}\norm{z_{n}-x_{n-1}}^{2} 
  \leq \sigma_{n}-\sigma_{n+1},
\end{equation}
and because $\rbr{\forall n \in \NPP} ~ 0< \tau_{n}/\tau_{n+1}\leq 1$,
we conclude that 
\begin{equation}
  \label{eq:sigma.decrease}
  \fa{\sigma_{n}}{n \in \NPP} \text{~is decreasing}.
\end{equation}
It remains to show that $\sigma_{n}\to \inf h$.  
Set $\sigma\coloneqq \inf_{n \in \NPP}\sigma_{n}$. 
Due to \cref{eq:sigma.decrease},
\begin{equation}
  \label{eq:sigma.lim}
  \sigma_{n}\downarrow \sigma
\end{equation}
and it therefore suffices to prove that $\sigma = \inf h$. 
Let us argue by contradiction: assume that $\sigma > \inf h \geq \minf$.
By \cref{eq:mfista.sum}, 
\begin{equation}
  \label{eq:bounded.mfista}
  \rbr{\forall n \in \NPP} 
  \quad
  \frac{1}{2\gamma}\sum_{k =1}^{n} 
  \rbr*{1-\frac{\tau_{k}^{2}}{\tau_{k+1}^{2}}} \norm{z_{k}-x_{k-1}}^{2}
  \leq \sum_{k=1}^{n}\rbr{\sigma_{k} -\sigma_{k+1}}
  = \sigma_{1} - \sigma_{n+1}
  \leq \sigma_{1} -\sigma < \pinf,
\end{equation}
which implies that $\sum_{n \in \NPP}\rbr{1-\tau_{n}^{2}/\tau_{n+1}^{2}}
\norm{z_{n}-x_{n-1}}^{2} < \pinf$.
Thus, since $\sum_{n \in \NPP}\rbr{1-\tau_{n}^{2}/\tau_{n+1}^{2}} = \pinf$
by \cref{l:blowsup},
\cref{l:summable-liminf} guarantees that 
$\varliminf \norm{z_{n}-x_{n-1}}^{2} =0$, i.e.,
$\varliminf \norm{z_{n}-x_{n-1}} = 0$.
In turn, let $\fa{k_{n}}{n \in \NPP} $ 
be a strictly increasing sequence in $\NPP$
such that $\norm{z_{k_{n}} - x_{k_{n}-1}} \to 
\varliminf \norm{z_{n}-x_{n-1}}= 0$.
It follows from \cref{i:mfista.1} and \cref{eq:sigma.lim}
that
\begin{math}
  \sigma \leftarrow \sigma_{k_{n}} 
  = h\rbr{x_{k_{n}}}  + \rbr{2\gamma}^{-1}\norm{z_{k_{n}}-x_{k_{n}-1}}^{2}   
  \to \inf h + 0 = \inf h.
\end{math}
Consequently, $\sigma =\inf h$, which violates 
the assumption that $\sigma > \inf h$.
To summarize, we have shown that 
$\sigma_{n}\downarrow \inf h$.

\cref{i:mfista.3}: 
Since $\inf h > \minf$,
combining \cref{i:mfista.1}, \cref{i:mfista.2},
and \cref{eq:mfista.sigma}
gives $z_{n} -x_{n-1} \to 0$.
To show that $x_{n}-x_{n-1}\to 0$, we
 infer from \cref{eq:mfista.alg}
that,
for every $n \in \NPP$,
$x_{n}-x_{n-1}=x_{n-1}-x_{n-1} = 0$ if 
$h\rbr{x_{n-1}} \leq h\rbr{z_{n}} $,
and $x_{n}-x_{n-1} = z_{n}-x_{n-1}$ otherwise;
therefore, $\rbr{\forall n \in \NPP} ~ \norm{x_{n}-x_{n-1}} \leq  
\norm{z_{n}-x_{n-1}}  $.
Consequently, because $z_{n}-x_{n-1}\to 0$,
it follows that $x_{n}-x_{n-1}\to 0$,
as required.

\cref{i:mfista.4}\&\cref{i:mfista.5}: Straightforward.

\cref{i:mfista.6}: 
Fix $w \in \Argmin h$ and define 
\begin{math}
  \rbr{\forall n \in \NPP} ~ \mu_{n}\coloneqq
  h\rbr{x_{n}} - h\rbr{w} 
  =h\rbr{x_{n}} -\min h \geq 0
  \text{ and }
  u_{n}\coloneqq \tau_{n}z_{n} -\rbr{\tau_{n}-1} x_{n-1}-w.
\end{math}
 Due to \cref{eq:rewrite-cont-tau}
and the fact that $\set{\mu_{n}}{n \in \NPP} \subset \RP$,
\cref{l:MFISTA.1step}\cref{i:mfista.1stepb}
entails that 
\begin{math}
  \rbr{\forall n \in \NPP} ~
  \tau_{n+1}^{2}\mu_{n+1} + \rbr{2\gamma}^{-1}\norm{u_{n+1}}^{2}
  \leq \tau_{n+1}\rbr{\tau_{n+1}-1} \mu_{n}+\rbr{2\gamma}^{-1}\norm{u_{n}}^{2}
  \leq \tau_{n}^{2}\mu_{n} + \rbr{2\gamma}^{-1}\norm{u_{n}}^{2}.
\end{math}
Hence, 
\begin{equation}
  \rbr{\forall n \in \NPP} \quad
  h\rbr{x_{n}} -\min h = \mu_{n} 
  \leq
\frac{1}{\tau_{n}^{2}}\rbr[\big]{\tau_{n}^{2}\mu_{n}+\rbr{2\gamma}^{-1}\norm{u_{n}}^{2}} 
  \leq
\frac{1}{\tau_{n}^{2}}\rbr[\big]{\tau_{1}^{2}\mu_{1}+\rbr{2\gamma}^{-1}\norm{u_{1}}^{2}}, 
\end{equation}
which verifies the claim.

\cref{i:mfista7}:
Let us adapt the notation of \cref{i:mfista.6}.
Since $\{\mu_{n}\}_{n\in\NPP}\subset\RP$,
we derive from \cref{l:MFISTA.1step}\cref{i:mfista.1stepb} and
\cref{e:Attouch.cond2} that
\begin{subequations}
\label{e:rate-mfista}
\begin{align}
(\forall n\in\NPP)\quad
\tau_{n+1}^2\mu_{n+1}+(2\gamma)^{-1}\norm{u_{n+1}}^{2}
&\leq\tau_{n+1}(\tau_{n+1}-1)\mu_{n}+(2\gamma)^{-1}\norm{u_{n}}^{2}
\\
&=\tau_{n}^{2}\mu_{n}+(2\gamma)^{-1}\norm{u_{n}}^{2}
-\big(\tau_{n}^{2}-\tau_{n+1}^{2}+\tau_{n+1}\big)\mu_{n}
\\
&\leq\tau_{n}^{2}\mu_n+(2\gamma)^{-1}\norm{u_{n}}^{2}
-(1-\delta)\tau_{n+1}\mu_n.
\end{align}
\end{subequations}
On the other hand, since $\delta\in\left]0,1\right[$
and $\{\mu_n\}_{n\in\NPP}\subset\RP$,
it follows that $\{(1-\delta)\tau_{n+1}\mu_n\}_{n\in\NPP}\subset\RP$.
Combining this, \cref{e:rate-mfista}, and
\cref{l:summable-limit}\cref{i:summable},
we infer that $(1-\delta)\sum_{n\in\NPP}\tau_{n+1}\mu_n<\pinf$.
In turn, since $(\tau_n)_{n\in\NPP}$ is increasing
and $1-\delta>0$,
it follows that $\sum_{n\in\NPP}\tau_{n}\mu_n<\pinf$.
Consequently, since $(\mu_n)_{n\in\NPP}$ is decreasing
due to \cref{i:mfista.1} and since clearly $\sum_{n\in\NPP}\tau_{n}=\pinf$,
\cite[Lemma~22]{Attouch-Cabot-HAL2017} ensures that
\begin{equation}
\label{e:otn-mfista}
h(x_n)-\min h
=\mu_n
=\smallO{\bigg(\frac{1}{\sum_{k=1}^n\tau_k}\bigg)}
\text{ as }n\to\pinf,
\end{equation}
which establishes \cref{e:mfista-rate1}.
In turn, we deduce from \cref{e:Attouch.cond2}, \cref{e:otn-mfista}, and
\cref{i:mfista.1} that
\begin{subequations}
\begin{align}
0\leq\tau_{n+1}^2(h(x_{n+1})-\min h)
&=(h(x_{n+1})-\min h)\bigg(\tau_{1}^2+
\sum_{k=1}^{n}\big(\tau_{k+1}^{2}-\tau_{k}^2\big)\bigg)
\\
&\leq(h(x_{n+1})-\min h)\bigg(\tau_{1}^{2}+\delta\sum_{k=1}^{n}\tau_{k+1}\bigg)
\\
&\leq(h(x_{n+1})-\min h)\bigg(\tau_{1}^{2}
-\delta\tau_1
+\delta\sum_{k=1}^{n+1}\tau_{k}\bigg)
\\
&\to 0\text{ as }n\to\pinf,
\end{align}
\end{subequations}
which verifies \cref{e:otn-mfista.1}.
\end{proof}

\begin{remark}
\label{r:last}
In \cref{t:mfista},
the assumption that $\tauinf=\pinf$ is actually not needed in items
\cref{i:mfista.1} and \cref{i:mfista.4}--\cref{i:mfista7}.
For clarity, let us sketch the proof
of \cref{i:mfista.1} under the assumption that
$\tau_\infty<\pinf$.
Assume that $\tau_\infty<\pinf$.
We infer from the first inequality in
\cref{e:mfista-estimate} that
\begin{equation}
(\forall n\in\NPP)\quad
\tau_{n+1}^2\mu_{n+1}+(2\gamma)^{-1}\norm{u_{n+1}}^2
\leq
\tau_n^2\mu_n+(2\gamma)^{-1}\norm{u_n}^2
-\big(\tau_n^2-\tau_{n+1}^2+\tau_{n+1}\big)\mu_n
\end{equation}
and it follows from \cref{l:summable-limit}\cref{i:summable}
that $\sum_{n\in\NPP}(\tau_n^2-\tau_{n+1}^2+\tau_{n+1})
\mu_n<\pinf$.
One may argue similarly to the case (b)
in the proof of \cref{t:general.case} to obtain
$\varliminf\mu_n=0$ or, equivalently, $\varliminf h(x_n)
=h(w)$, which contradicts \cref{eq:mfista.w}.
Therefore $\inf_{n\in\NPP}h(x_n)=\inf h$ and we get
$h(x_n)\downarrow\inf_{n\in\NPP}h(x_n)=\inf h$.
Items \cref{i:mfista.4} and \cref{i:mfista.5} follow
from this.
In addition,
note that we did not use the assumption that
$\tau_\infty=\pinf$ in the proof of
\cref{i:mfista.6} and \cref{i:mfista7}.
It is, however, worth pointing out that the conclusion of
\cref{t:mfista}\cref{i:mfista.6} is not so interesting when $\tauinf<\pinf$.
\end{remark}

\section{Open problems}
\label{openprobs}
  We conclude this paper
  with a few open problems.
  \begin{enumerate}[label=\textbf{P\arabic*},ref=\arabic*,leftmargin=*]
    \item In \cref{t:general.case},
      is it true that $h\rbr{x_{n}} \to \inf h$?
    \item 
  What can be said about
  the conclusions of \cref{t:main-fista}\cref{i:main2}\&\cref{i:main.6b}
  if $\sup_{n \in \NPP}\rbr{ n /\tau_{n}}  = \pinf$? 
\item Suppose that $\Argmin h \neq \varnothing$.
  Do the sequences   
  generated by \cref{alg:FISTA} and \cref{eq:mfista.alg} 
  always
  converge
  weakly to a point in $\Argmin h$?
  \end{enumerate}

\section*{Acknowledgments}
We thank two referees for
their very careful reading
and constructive comments.
	HHB and XW were partially 
	supported by NSERC Discovery Grants while 
	MNB was partially supported 
	by a Mitacs Globalink Graduate Fellowship Award.


\appendixpage

\begin{appendices}
  \crefalias{section}{appsec}

  \section{}\label{app:blowsup}

  For the sake of completeness, 
  we provide the following proof of \cref{l:blowsup}
  based on
  \cite[Problem~3.2.43]{Kaczor.Nowak-1}. 

    \begin{proof}[Proof of \cref{l:blowsup}]
    Because 
   $\rbr{\forall n\in \NPP} ~ 1-\rbr{\tau_{n}-1}^{2}/\tau_{n+1}^{2} 
   \geq 1- \tau_{n}^{2}/\tau_{n+1}^{2}$
   due to
   the assumption that
   $\set{\tau_{n}}{n \in \NPP} \subset \left[1,\pinf \right[ $,
       it is sufficient to establish that  
       \begin{equation}\label{eq:blowsup-prove}
         \sum_{n \in \NPP}\rbr*{1-\frac{\tau_{n}^{2}}{\tau_{n+1}^{2}}} 
         = \pinf.
       \end{equation}
    Indeed, 
    since $\tau_{n}\to \pinf$,
    there exists $N \in \NPP$
    such that 
    \begin{equation}
      \label{eq:N-blowsup}
      \rbr{\forall n \geq N} \quad 
      \tau_{n}^{2} \geq  2\tau_{1}^{2}.
    \end{equation}
    Now, 
    set $\rbr{\forall n \in \NPP} ~ \xi_{n} 
    \coloneqq \tau_{n+1}^{2}-\tau_{n}^{2}$,
    and 
    $\rbr{\forall n \in \NPP} ~ \sigma_{n}
    \coloneqq \sum_{k=1}^{n} \xi_{k}$.
    Then,
    on the one hand,
    since $\fa{\tau_{n}}{n \in \NPP}$
    is increasing and positive,
    we have 
    \begin{math}
      \rbr{\forall n \in \NPP} ~ 
      \xi_{n} = \tau_{n+1}^{2} - \tau_{n}^{2} \geq 0,
    \end{math}
    and $\fa{\sigma_{n}}{ n \in \NPP} $ is
    therefore an increasing sequence
    in $\RP$;
    moreover, due to \cref{eq:N-blowsup},
    \begin{math}
      \rbr{\forall n \geq N} ~ \sigma_{n} =
      \sum_{k=1}^{n}\rbr{\tau_{k+1}^{2}-\tau_{k}^{2}} =
      \tau_{n+1}^{2}-\tau_{1}^{2} \geq \tau_{1}^{2}
      \geq  1.
    \end{math}
    On the other hand, 
    because $\tau_{n}\to \pinf$,
    we have 
    $\sigma_{n}  =\tau_{n+1}^{2}-\tau_{1}^{2} \to \pinf$.
    Altogether, 
    since
    \begin{equation}
      \label{eq:cauchy-blowsup}
      \rbr{\forall n \geq  N} \rbr{\forall p \in \NPP} \quad
      \sum_{k=1}^{p}\frac{\xi_{n+k}}{\sigma_{n+k}}
      \geq  \sum_{k=1}^{p}\frac{\xi_{n+k}}{\sigma_{n+p}}
      = \frac{\sigma_{n+p}-\sigma_{n}}{\sigma_{n+p}}
      = 1-\frac{\sigma_{n}}{\sigma_{n+p}}  
    \end{equation}
    by the fact that $\fa{\sigma_{n}}{n \geq  N} $
    is increasing, 
   we see that 
   \begin{math}
     \rbr{\forall n \geq  N}~ 
     \varliminf_{p} \sum_{k=1}^{p}\rbr{ \xi_{n+k}/\sigma_{n+k}}
     \geq  1.
   \end{math}
    It follows that the partial sums
    of  
    \begin{math}
      \sum_{n \geq  N}\rbr{\xi_{n}/\sigma_{n}}
    \end{math}
    do not satisfy the Cauchy property.
    Hence, since $\rbr{\forall n \geq  N} ~ 
    \xi_{n}/\sigma_{n}\geq 0
    \text{ and }
    \sigma_{n}=\tau_{n+1}^{2}-\tau_{1}^{2}$, we obtain
   \begin{equation}
      \sum_{n \geq  N}\frac{\tau_{n+1}^{2}-\tau_{n}^{2}}{\tau_{n+1}^{2}-\tau_{1}^{2}}
      = \sum_{n \geq N}\frac{\xi_{n}}{\sigma_{n}} = \pinf.
    \end{equation}
    Consequently, in the light of 
    \cref{eq:N-blowsup},
    \begin{equation}
      \sum_{n \geq  N}\rbr*{1-\frac{\tau_{n}^{2}}{\tau_{n+1}^{2}}} 
      = \sum_{n \geq  N}\frac{\tau_{n+1}^{2}-\tau_{n}^{2}}{\tau_{n+1}^{2}}
      \geq  \sum_{n \geq  N}\frac{\tau_{n+1}^{2}-\tau_{n}^{2}}{2\rbr[\big]{\tau_{n+1}^{2}-\tau_{1}^{2}}}
      = \pinf,
    \end{equation}
    and \cref{eq:blowsup-prove} follows.
    \end{proof}

    \section{}\label{app:liminf-sum}
    \begin{proof}[Proof of \cref{l:summable-liminf}]
  Let us argue by contradiction.
  Towards this goal, assume that 
  $\varliminf \beta_{n} \in \left] 0, \pinf \right]$
  and fix $ \beta \in \left] 0 , \varliminf \beta_{n} \right[$.
  Then, there exists $N \in \NPP$ such that 
  $\rbr{\forall n \geq N} ~ \beta_{n} \geq \beta$,
  and hence, because $\set{ \alpha_{n} }{n \in \NPP} \subseteq \RP$,
  we have 
  $\rbr{\forall n \geq N} ~ \alpha_{n}\beta_{n} \geq \beta\alpha_{n}$.
  Consequently,
  since $\sum_{n \in \NPP}\alpha_{n} = \pinf$, 
  it follows that 
  $\sum_{n \geq N}\alpha_{n}\beta_{n} 
  \geq \sum_{n \geq  N} \beta \alpha_{n} = \pinf$,
  which violates our assumption.
  To sum up, $\varliminf \beta_{n} = 0$.
    \end{proof}

    \section{}\label{app:sum-limit}

    The following self-contained proof of \cref{l:summable-limit}
    follows 
    \cite[Lemma~3.1]{Combettes-fejer-2001} 
    in the case $\chi=1$;
    however, 
    we do not require the
    error sequence $\fa{\varepsilon_{n}}{n \in \NPP} $
    to be positive.

    \begin{proof}[Proof of \cref{l:summable-limit}]
		\ref{i:convergence}: Set $\alpha \coloneqq \varliminf_{n}\alpha_{n} 
            \in \left[\inf_{n \in \NPP}\alpha_{n}, \pinf\right]$ and let 
        $\fa{\alpha_{k_{n}}}{n \in \NPP}$ be a subsequence 
        of $\fa{\alpha_{n}}{n \in \NPP}$ that converges to 
        $\alpha$. 
        We first show that 
        $\alpha < \pinf$.
        Since $\set{\beta_{n}}{ n \in \NPP} \subset \RP$,
        it follows from \eqref{eq:seqs-cond} that 
        $\rbr{\forall n \in \NPP} ~ \alpha_{n+1} - \alpha_{n} \leq \varepsilon_{n }$. 
        Thus, $\rbr{\forall n \geq  2} ~ \alpha_{n}  
        =  \alpha_{1} +  \sum_{k=1}^{n-1}\rbr{\alpha_{k+1}-\alpha_{k}} 
        \leq \alpha_{1} + \sum_{k=1}^{n-1}\varepsilon_{k}$; 
        in particular, $\rbr{\forall n \geq  2} ~
        \alpha_{k_{n}}  \leq 
        \alpha_{1} +  \sum_{k=1}^{k_{n}-1}\varepsilon_{k}$.
        Hence, since $\alpha_{k_{n}}\to \alpha$
        and $\sum_{n \in \NPP}\varepsilon_{n}$ converges,
        it follows that
        $\alpha  \leq  \alpha_{1} +  \sum_{k \in \NN}\varepsilon_{k } < \pinf$, as claimed. 
        In turn, to establish the convergence of 
        $\fa{\alpha_{n}}{n \in \NPP} $,
        it suffices to verify that 
        $\varlimsup_{n}\alpha_{n} \leq \varliminf_{n} \alpha_{n}$.
        Towards this goal,
         let $\delta $ be in $\RPP$. 
        Then, on the one hand,  
        Cauchy's criterion ensures the existence of 
        $k_{n_{0}} \in \NPP$ 
        such that $\alpha_{k_{n_{0} } } - \alpha \leq \delta /2 $ and that 
        $\rbr{\forall n\geq k_{n_{0}}}\rbr{\forall m \in \NPP}~  
        \sum_{k=n}^{n+m}\varepsilon_{k } \leq \delta/2$.  
        On the other hand, 
        because $\set{\beta_{n}}{n \in \NPP} \subset\RP$,
        \eqref{eq:seqs-cond} implies that 
        $\rbr{\forall n \geq  k_{n_{0}} +1} ~ \alpha_{n} - \alpha_{k_{n_{0} } } 
        = \sum_{k=k_{n_{0}}}^{n-1} \rbr{ \alpha_{k+1} - \alpha_{k}} 
        \leq \sum_{k=k_{n_{0}}}^{n-1}\varepsilon_{k } $. 
        Altogether, $\rbr{\forall n \geq k_{n_{0}} +1} ~ \alpha_{n} 
        \leq \alpha_{k_{n_{0}}} + \sum_{k=k_{n_{0}}}^{n-1}\varepsilon_{k } 
        \leq \rbr{\alpha + \delta /2 } + \delta/2 
        = \alpha+ \delta$, from which we 
        deduce that $\varlimsup_{n} \alpha_{n} 
        \leq \alpha + \delta$. 
        Consequently, since $\delta$ is arbitrarily 
        chosen in $\RPP$, 
       it follows that  
        $\varlimsup_{n}\alpha_{n} 
        \leq \alpha = \varliminf_{n}\alpha_{n}$, and therefore, 
        $\fa{\alpha_{n}}{n \in \NN}$ converges to $\alpha$.
		
		\ref{i:summable}: We derive from 
        \eqref{eq:seqs-cond} that 
        $\rbr{\forall N \in \NPP} ~  
        \sum_{n=1}^{N}\beta_{n} \leq \sum_{n=1}^{N}\rbr{\alpha_{n} - \alpha_{n+1}} 
        + \sum_{n=1}^{N}\varepsilon_{n } 
        = \alpha_{1}-\alpha_{N+1} + \sum_{n=1}^{N}\varepsilon_{n }$. 
        Hence, since $\sum_{n \in \NPP}\varepsilon_{n}$ is convergent 
        and, by  \ref{i:convergence}, 
        $\lim_{n}\alpha_{n} =\alpha$,  
        letting $N \to \pinf$ yields 
        $\sum_{n \in \NN} \beta_{n} \leq \alpha_{1} - \alpha + \sum_{n \in \NN}\varepsilon_{n } <
        \pinf$, 
        and so $\sum_{n \in \NPP}\beta_{n} < \pinf$, as required.
    \end{proof}

    \section{}\label{app:seq1}
    \begin{proof}[Proof of \cref{l:seq.1}]
  Indeed, since
    $\rbr{\forall n \in \NPP}$
  \begin{subequations}
    \begin{align}
    \sum_{k=1}^{n}k\rbr{\alpha_{k}-\alpha_{k+1}} 
    & = \sum_{k=1}^{n}\rbr[\big]{k \alpha_{k} - \rbr{k+1}\alpha_{k+1}+\alpha_{k+1} } 
    \\
    & = \sum_{k=1}^{n}\rbr[\big]{k\alpha_{k}-\rbr{k+1} \alpha_{k+1} } 
    + \sum_{k=1}^{n}\alpha_{k+1}
     = \alpha_{1}- \rbr{n+1} \alpha_{n+1}
    +\sum_{k=1}^{n}\alpha_{k+1},
  \end{align}
  \end{subequations}
  we readily obtain the conclusion.
\end{proof}

\section{}\label{app:seq2}

\begin{proof}[Proof of \cref{l:seq2}]
  ``\ensuremath{\implies}'': 
  Since $\fa{\alpha_{n}}{n \in \NPP} $ is a decreasing
  sequence in $\RP$ and $\sum_{n \in \NPP}\alpha_{n} < \pinf$,
  it follows that 
   $n\alpha_{n}\to 0$
   (see, e.g., \cite[Problem~3.2.35]{Kaczor.Nowak-1}).
  Invoking the assumption that 
  $\sum_{n \in \NPP}\alpha_{n} < \pinf$ once more,
  we infer from \cref{l:seq.1} that 
  $\sum_{n \in \NPP}n\rbr{\alpha_{n}-\alpha_{n+1}} < \pinf$, 
  as desired.

  ``\ensuremath{\Leftarrow}'':
  A consequence of \cref{l:seq.1}.
\end{proof}

    \section{}\label{app:keylem}
    \begin{proof}[Proof of \cref{f:key-ineq}]
      This is similar to the one found 
  in \cite[Lemma~2.3]{BeckTeboulle-FISTA} and included for completeness; 
  see also \cite[Lemma~3.1]{Chambolle-Dossal-15}. 
      Fix $\rbr{x,y}  \in \HH\times\HH$.
      On the one hand, 
      by \cref{assump:f} and \cref{assump:gamma}
      in \cref{assump:1},
      $\grad{f}$ is Lipschitz continuous with constant $\gamma^{-1}$,
      from which, the Descent Lemma 
      (see, e.g., \cite[Lemma~2.64]{Bauschke-Combettes-2017}),
      and the convexity of $f$
      we infer that 
      \begin{subequations}
          \label{eq:key1}
        \begin{align}
          f\rbr{Ty}  &\leq
          f\rbr{y}  + \scal{\grad{f}\rbr{y} }{Ty -y}  +\rbr{2\gamma}^{-1}\norm{Ty-y}^{2}
          \\
          & = f\rbr{y} +\scal{\grad{f}\rbr{y} }{x-y}  
          + \scal{\grad{f}\rbr{y} }{Ty-x} + \rbr{2\gamma}^{-1}\norm{Ty-y}^{2} 
          \\
          & \leq f\rbr{x}  + \scal{\grad{f}\rbr{y} }{Ty-x} + \rbr{2\gamma}^{-1}\norm{Ty-y}^{2}. 
        \end{align}
      \end{subequations}
      On the other hand, 
      because $Ty =\prox{\gamma g}\rbr{y-\gamma{\grad{f}\rbr{y} }} $,
      \cite[Proposition~12.26]{Bauschke-Combettes-2017}
      asserts that 
      \begin{subequations}
        \label{eq:key2}
        \begin{align}
          g\rbr{Ty}  
        & \leq g\rbr{x} - \gamma^{-1}\scal{\rbr{y-\gamma{\grad{f}\rbr{y} }}- Ty }{x-Ty} 
        \\
        & \leq g\rbr{x}  + \scal{\gamma^{-1}\rbr{y-Ty} - {\grad{f}\rbr{y} }}{Ty-x}.
        \end{align}
      \end{subequations}
      Altogether, upon adding 
      \cref{eq:key1} and \cref{eq:key2},
      it follows that 
      \begin{subequations}
        \begin{align}
          h\rbr{Ty}  
          & \leq h\rbr{x}  + \gamma^{-1}\scal{y-Ty}{Ty-x} 
          +\rbr{2\gamma}^{-1}\norm{Ty-y}^{2}
          \\
          &  = h\rbr{x}  + \gamma^{-1}\scal{y-Ty}{y-x} 
          +\gamma^{-1}\scal{y-Ty}{Ty-y} +\rbr{2\gamma}^{-1}\norm{Ty-y}^{2} 
          \\
          & = h\rbr{x}  +\gamma^{-1}\scal{y-Ty}{y-x} 
          -\rbr{2\gamma}^{-1}\norm{Ty-y}^{2},
        \end{align}
      \end{subequations}
      which yields \cref{e:key-ineq}.
\end{proof}

\section{}
\label{app:folklore}
\begin{proof}[Proof of \cref{e:folklore}.]
Recall that $\lim(\tau_n/n)=1/2$.
In turn, because $(\forall n\in\NPP)$
$\tau_n^2=\tau_{n+1}^2-\tau_{n+1}$,
it follows that
\begin{equation}
\frac{n(\tau_n-\tau_{n+1})}{\tau_{n+1}}
=\frac{n(\tau_n^2-\tau_{n+1}^2)}{\tau_{n+1}(\tau_n+\tau_{n+1})}
=\frac{-n\tau_{n+1}}{\tau_{n+1}(\tau_n+\tau_{n+1})}
=\frac{-1}{\displaystyle
\frac{\tau_n}{n}+\frac{\tau_{n+1}}{n+1}\frac{n+1}{n}}
\to\frac{-1}{\frac{1}{2}+\frac{1}{2}}=-1
\end{equation}
and therefore that
\begin{equation}
n\Bigg(\frac{\tau_n-1}{\tau_{n+1}}-1+\frac{3}{n}\Bigg)
=\frac{n(\tau_n-\tau_{n+1})}{\tau_{n+1}}
-\frac{n+1}{\tau_{n+1}}\frac{n}{n+1}+3
\to -1-2+3=0.
\end{equation}
Hence, \cref{e:folklore} holds.
\end{proof}
\end{appendices}

\begin{thebibliography}{10}
\setlength{\itemsep}{1pt}
\small

\bibitem{Attouch-Cabot-HAL2017}
{\sc H.~Attouch and A.~Cabot}, {\em Convergence rates of inertial
  forward-backward algorithms}, SIAM J. Optim., 28 (2018), pp.~849--874.

\bibitem{Attouch-JOTA-18}
{\sc H.~Attouch, A.~Cabot, Z.~Chbani, and H.~Riahi}, {\em Inertial
  forward--backward algorithms with perturbations: {A}pplication to {T}ikhonov
  regularization}, J. Optim. Theory Appl., 179 (2018), pp.~1--36.

\bibitem{Attouch-fast-MPB-2018}
{\sc H.~Attouch, Z.~Chbani, J.~Peypouquet, and P.~Redont}, {\em Fast
  convergence of inertial dynamics and algorithms with asymptotic vanishing
  viscosity}, Math. Program. Ser. B, 168 (2018), pp.~123--175.

\bibitem{Attouch2017rate}
{\sc H.~Attouch, Z.~Chbani, and H.~Riahi}, {\em Rate of convergence of the
  {N}esterov accelerated gradient method in the subcritical case {$\alpha \leq
  3$}}, ESAIM Control Optim. Calc. Var., 25 (2019).
\newblock \url{https://doi.org/10.1051/cocv/2017083}.

\bibitem{Attouch-rate-2016}
{\sc H.~Attouch and J.~Peypouquet}, {\em The rate of convergence of
  {N}esterov's accelerated forward-backward method is actually faster than
  $1/k^2$}, SIAM J. Optim., 26 (2016), pp.~1824--1834.

\bibitem{Attouch-fast-arxiv-2015}
{\sc H.~Attouch, J.~Peypouquet, and P.~Redont}, {\em Fast convergence of an
  inertial gradient-like system with vanishing viscosity}.
\newblock \url{https://arxiv.org/abs/1507.04782}, 2015.

\bibitem{Aujol-Dossal-2015}
{\sc J.-F. Aujol and C.~Dossal}, {\em Stability of over-relaxations for the
  forward-backward algorithm, application to {FISTA}}, SIAM J. Optim., 25
  (2015), pp.~2408--2433.

\bibitem{Bauschke-Combettes-2017}
{\sc H.~H. Bauschke and P.~L. Combettes}, {\em {C}onvex {A}nalysis and
  {M}onotone {O}perator {T}heory in {H}ilbert {S}paces}, Springer, New York,
  second~ed., 2017.

\bibitem{BeckSIAM}
{\sc A.~Beck}, {\em {First-Order Methods in Optimization}}, {SIAM}, 2017.

\bibitem{BeckTeboulle-MFISTA}
{\sc A.~Beck and M.~Teboulle}, {\em Fast gradient-based algorithms for
  constrained total variation image denoising and deblurring problems}, IEEE
  Trans. Image Process., 18 (2009), pp.~2419--2434.

\bibitem{BeckTeboulle-FISTA}
\leavevmode\vrule height 2pt depth -1.6pt width 23pt, {\em A fast iterative
  shrinkage-thresholding algorithm for linear inverse problem}, SIAM J. Imaging
  Sci., 2 (2009), pp.~183--202.

\bibitem{Cruz-Nghia-2016}
{\sc J.~Y. Bello~Cruz and T.~T.~A. Nghia}, {\em On the convergence of the
  forward-backward splitting method with linesearches}, Optim. Methods Softw.,
  31 (2016), pp.~1209--1238.

\bibitem{Bredies-2009}
{\sc K.~Bredies}, {\em A forward-backward splitting algorithm for the
  minimization of non-smooth convex functionals in {B}anach space}, Inverse
  Problems, 25 (2009), pp.~015005, 20.

\bibitem{Bruck-Reich}
{\sc R.~E. Bruck and S.~Reich}, {\em Nonexpansive projections and resolvents of
  accretive operators in {B}anach spaces}, Houston J. Math., 3 (1977),
  pp.~459--470.

\bibitem{Chambolle-Dossal-15}
{\sc A.~Chambolle and C.~Dossal}, {\em On the convergence of the iterates of
  the ``fast iterative shrinkage/thresholding algorithm''}, J. Optim. Theory
  Appl., 166 (2015), pp.~968--982.

\bibitem{Chambolle-Pock-2016}
{\sc A.~Chambolle and T.~Pock}, {\em An introduction to continuous optimization
  for imaging}, Acta Numer., 25 (2016), pp.~161--319.

\bibitem{Combettes-fejer-2001}
{\sc P.~L. Combettes}, {\em Quasi-{F}ej\'erian analysis of some optimization
  algorithms}, in {Inherently Parallel Algorithms in Feasibility and
  Optimization and Their Applications}, D.~Butnariu, Y.~Censor, and S.~Reich,
  eds., vol.~8, North-Holland, Amsterdam, 2001, pp.~115--152.

\bibitem{Combettes-Glaudin}
{\sc P.~L. Combettes and L.~E. Glaudin}, {\em Quasinonexpansive iterations on
  the affine hull of orbits: {F}rom {M}ann's mean value algorithm to inertial
  methods}, SIAM J. Optim., 27 (2017), pp.~2356--2380.

\bibitem{Combettes-Salzo-Villa-2018}
{\sc P.~L. Combettes, S.~Salzo, and S.~Villa}, {\em Consistent learning by
  composite proximal thresholding}, Math. Program. Ser. B, 167 (2018),
  pp.~99--127.

\bibitem{Kaczor.Nowak-1}
{\sc W.~J. Kaczor and M.~T. Nowak}, {\em {Problems in Mathematical Analysis. I.
  Real Numbers, Sequences and Series}}, American Mathematical Society,
  Providence, RI, 2000.

\bibitem{Moursi2018}
{\sc W.~M. Moursi}, {\em The forward--backward algorithm and the normal
  problem}, J. Optim. Theory Appl., 176 (2018), pp.~605--624.

\bibitem{Nesterov-1983}
{\sc Y.~E. Nesterov}, {\em A method for solving the convex programming problem
  with convergence rate {$O(1/k^{2})$}}, Dokl. Akad. Nauk, 269 (1983),
  pp.~543--547.

\bibitem{Opial1967weak}
{\sc Z.~Opial}, {\em Weak convergence of the sequence of successive
  approximations for nonexpansive mappings}, Bull. Amer. Math. Soc., 73 (1967),
  pp.~591--597.

\bibitem{Pazy1971}
{\sc A.~Pazy}, {\em Asymptotic behavior of contractions in {H}ilbert space},
  Israel J. Math., 9 (1971), pp.~235--240.

\bibitem{Radulescu09}
{\sc T.-L. R\v{a}dulescu, V.~D. R\v{a}dulescu, and T.~Andreescu}, {\em
  {Problems in Real Analysis: Advanced Calculus on the Real Axis}}, Springer,
  New York, 2009.

\bibitem{Schmidt-Nicolas-Francis-2011}
{\sc M.~Schmidt, N.~L. Roux, and F.~Bach}, {\em Convergence rates of inexact
  proximal-gradient methods for convex optimization}, in Advances in Neural
  Information Processing Systems 24, J.~Shawe-Taylor, R.~S. Zemel, P.~L.
  Bartlett, F.~Pereira, and K.~Q. Weinberger, eds., Curran Associates, Inc.,
  2011, pp.~1458--1466.

\bibitem{Su-Boyd-Candes-2016}
{\sc W.~Su, S.~Boyd, and E.~J. Cand\`es}, {\em A differential equation for
  modeling {N}esterov's accelerated gradient method: theory and insights}, J.
  Mach. Learn. Res., 17 (2016), pp.~1--43.

\bibitem{Villa-Salzo-Luca-Verri-2013}
{\sc S.~Villa, S.~Salzo, L.~Baldassarre, and A.~Verri}, {\em Accelerated and
  inexact forward-backward algorithms}, SIAM J. Optim., 23 (2013),
  pp.~1607--1633.

\end{thebibliography}
\end{document}